\documentclass[11pt]{amsart}
\usepackage{amsmath,amsthm,amssymb}
\usepackage[margin=1in]{geometry}
\usepackage{amsmath, bbm}
\usepackage{mathtools}
\usepackage{amsxtra}
\usepackage{amsthm}
\usepackage{amsfonts}
\usepackage{amssymb}
\usepackage{bm}
\usepackage{cases}
\usepackage{xcolor}
\usepackage{mathrsfs}  
\usepackage{enumerate}
\usepackage{url}
\usepackage{pdfpages}
\usepackage{cancel}
\usepackage[normalem]{ulem}
\usepackage[numbers,sort]{natbib}
\usepackage[pagebackref=true, colorlinks=true, citecolor=blue]{hyperref}
\numberwithin{equation}{section}

\theoremstyle{definition}

\newtheorem{definition}{Definition}
\theoremstyle{theorem}
\newtheorem{remark}{Remark}[section]
\newtheorem{theorem}{Theorem}[section]
\newtheorem{lemma}[theorem]{Lemma}

\newtheorem{proposition}[theorem]{Proposition}
\newtheorem{corollary}[theorem]{Corollary}

\newcommand{\eps}{\epsilon}
\newcommand{\veps}{\varepsilon}

\newcommand{\E}{\mathbb{E}}
\newcommand{\F}{\mathcal{F}}
\newcommand{\R}{\mathbb{R}}
\newcommand{\Z}{\mathbb{Z}}
\renewcommand{\P}{\mathbb{P}}

\newcommand{\N}{\mathbb{N}}
\newcommand{\T}{\mathbb{T}}
\newcommand{\D}{\mathcal{D}}

\newcommand{\TP}{\widetilde{\P}}
\newcommand{\TE}{\widetilde{\E}}
\newcommand{\TOmega}{\widetilde{\Omega}}
\newcommand{\TF}{\widetilde{\F}}
\newcommand{\Tu}{\widetilde{u}}

\newcommand{\Tpi}{\widetilde{\pi}}
\newcommand{\TS}{\widetilde{\mathcal{S}}}

\renewcommand{\Pr}{\text{Pr}} 
\newcommand{\cred}{\color{red}}
\newcommand{\cblue}{\color{blue}}

\newcommand{\lp}{\left(}
\newcommand{\rp}{\right)}

\renewcommand{\Pr}{\text{Pr}}

\title[Well-posedness and invariant measures of stochastic KdV equations]{Well-posedness and existence of an invariant measure for the linearly-damped KdV equations driven by a jump noise}
\date{}
\author[K. Tawri, R. Temam, X. Yang]{Krutika Tawri$^1$
\and
 Roger Temam$^2$ \and Xinwu Yang$^2$ }
 
 \address{\newline	$^1$ Department of Applied Mathematics, University of Washington Seattle, WA, USA.
 \newline	$^2$ Department of Mathematics, Indiana University Bloomington, IN, USA.}
 
 \email{ktawri@uw.edu (Krutika Tawri), temam@iu.edu (Roger Temam), xinwyang@iu.edu (Xinwu Yang)}
\begin{document}
\maketitle
\begin{abstract}
In this paper, we investigate the linearly damped KdV equation on the one-dimensional torus $\T$, perturbed by a multiplicative L\'{e}vy noise. For any damping coefficient $\gamma > 0$, we establish the existence and uniqueness of a pathwise weak solution with values in $H^2(\T)$.

In the second part of the paper, we analyze the long-time behavior of these solutions. This study is particularly subtle as the presence of jumps in time can significantly influence the asymptotics. We show, using the techniques of Maslowski and Seidler, that, provided the frictional damping coefficient $\gamma > 0$ is sufficiently large, the system influenced by square-integrable jumps admits an invariant measure in $H^2(\T)$.

\end{abstract}

\section{Introduction}
This article studies the linearly damped Korteweg-de Veries (KdV) equations driven by a L\'evy noise. The KdV system, {describing the behavior of shallow water wave propagation on the one-dimensional periodic domain $\T$ is given by,
\begin{equation}
	    \begin{cases}  \label{eqn: skdv with K}
		    du(t) + (\partial_x^3 u(t) +  u(t) \partial_x u(t) + \gamma u(t))dt \\
		    \ \ \ \ \ \ \ \ \ \ \ \ \ \ \ \ \ = G(u(t-))  dW(t) +  \int_{E_0} K(u(t-), \xi) d\widehat{\pi}(t,\xi) + \int_{E \setminus E_0} \mathscr{K}(u(t-), \xi) d\pi(t,\xi), \\
		     u(0) = u_0,
		    \end{cases}
\end{equation}
where the unknown {$u$ describes the elongation of the wave}. 
We will assume that the damping coefficient $\gamma$ is a strictly positive constant.  The stochastic integrals on the right-hand side represent different effects of a L\'evy noise: The term $G(u)dW$ influences the system continuously in time whereas the other two terms $\int_{E_0} K(u(t-), \xi) d\widehat{\pi}(t,\xi)$ and $\int_{E \setminus E_0} \mathscr{K}(u(t-), \xi) d\pi(t,\xi)$ impact the system as discrete impulses in time.

The KdV equation was originally derived as an ideal model of propogation of unidirectional weakly nonlinear waves in a channel. In realistic environments, the waves may interact with the medium that is not homogeneous. Therefore, it is natural to consider the energy dissipation and the external excitation. In this direction, we can consider dissipative effects due to friction via the linear damping term, and
	the effects of noisy environment via stochastic external forcing, which can be interpreted as random fluctuations in the medium through which the weakly nonlinear dispersive waves propagate (see, e.g., \cite{chang1986, SRC1998, Herman1990}). Such fluctuation may not be purely Gaussian as the medium may include some intermittent or impulsive disturbances. To capture such effects, we model the external excitation by a L\'{e}vy noise, which generalizes Wiener noise by allowing for jumps and perturbations acting on the waves.

The aim of this study is two-fold:
\begin{enumerate}
	\item We prove the global-in-time existence and uniqueness of pathwise {weak} solutions to \eqref{eqn: skdv with K}, 
	\item We prove the existence of an invariant measure for the dynamics of \eqref{eqn: skdv with K}$_1$ with $\mathscr{K}=0$.
\end{enumerate}

Before we lay out our mathematical framework and state our main theorems, we will give a brief overview of the existing literature in this direction.
The study of well-posedness of the deterministic KdV equations and its variants on the entire real line or a periodic domain has been the focus of many works; see e.g. \cite{Tem69,BS75,BS76,ST76,Kato79, Bourgain93,KPV96, CKSTT03}. The long-time behavior of the damped deterministic system corresponding to \eqref{eqn: skdv with K} has also been well-studied, e.g. in \cite{Ghi88} and \cite{MR97}, {where the existence of a global attractor and its regularity are established respectively}. We also refer the reader to \cite[Chapter IV, Section 7]{Tem97} for a detailed discussion on the global attractors for the deterministic KdV equations.   
There has also been extensive work done to understand the stochastic KdV equations under the influence of Gaussian noise.
 In \cite{DeBussche98}, the undamped system (i.e. $\gamma = 0$) on the real line is considered for both additive and multiplicative Wiener noise ($K=\mathscr{K}=0$) cases: In the case of $H^{1}(\mathbb{R})$ -valued additive noise, 
global pathwise well-posedness is established in $H^{1}(\mathbb{R})$; whereas in the case of multiplicative noise, only the existence of martingale solutions in $L^{2}(\mathbb{R})$ is established. 
In \cite{dBDT99}, the undamped KdV system, driven by a localized space--time white noise on $\T$, is studied wherein the authors  prove the existence of local-in-time unique pathwise solution taking values in $H^{s}(\T)$ for $-5/8 < s \le 0$. Furthermore, in the same work, global well-posedness of the system in $L^{2}(\T)$ is proven assuming that the noise is $L^{2}(\T)$-valued. 
In \cite{dBDT04}, the KdV equation without damping is studied on a torus under the influence of an additive, white-in-time and ``almost white-in-space'' noise. 
 In \cite{GMR21}, the damped stochastic KdV equation is studied under additive Wiener noise that is white-in-time and smooth-in-space, and global pathwise well-posedness is established in $H^{m}(\T)$ for all $m \geq 2$.
  Beyond well-posedness, the long-time statistical behavior of the damped stochastic KdV, driven by spatially smooth additive Wiener noise, has also
  been investigated in several works.  The existence of invariant measures in the phase space $H^1(\R)$ for the dynamics of \eqref{eqn: skdv with K} with $K=\mathscr{K}=0$, is established in
  \cite{EKZ18}; while in the periodic setting, the existence and uniqueness of invariant measures in $H^m(\T)$ for $m \geq 2$ is given in \cite{GMR21}, where the uniqueness of the invariant measure is proved by an asymptotic coupling strategy originially developed in earlier works such as \cite{GMR17}. To the best of our knowledge, this is the first work that studies the well-posedness and existence of invariant measure for the damped KdV equation driven by a L\'evy noise.

Our main results are Theorem \ref{Thm: summary existence} and Theorem \ref{Thm: summary invariant}:
\begin{itemize} 
	\item In Theorem \ref{Thm: summary existence} we establish the existence and uniqueness of pathwise solution to the equation \eqref{eqn: skdv with K} for a general L\'evy noise. In order to prove Theorem \ref{Thm: summary existence}, we first consider a simplified system where the effects of large jumps are neglected i.e. we set $\mathscr{K} = 0$ in \eqref{eqn: skdv with K}. We prove well-posedness of this simplified system (See  Definitions \ref{definition: pathwise}), and add the noise term back using a standard ``piecing out'' argument. Motivated by the deterministic works \cite{Tem69, Tem97}, and their stochastic extensions \cite{DeBussche98, GMR21}, we apply a parabolic regularization technique and a Galerkin approximation to prove Theorem \ref{Thm: summary existence}. 

\item In Theorem \ref{Thm: summary invariant} we prove the existence of an invariant measure {in the phase space $H^2(\mathbb{T})$} to the system \eqref{eqn: skdv with K} for appropriately large damping parameter $\gamma>0$ and under the assumption that the noise is square-integrable (i.e. $\mathscr{K}=0$). 
Our proof of existence of an invariant measure is based on classical techniques of Maslowski and Seidler \cite{MaSe99}; which have also been employed previously in several works (See e.g. \cite{BMO17, RM23} and the references therein).
\end{itemize}
The paper is organized as follows: In Section \ref{section: setup}, we layout the mathematical setup and the assumptions on the stochastic terms appearing on the right side of the equation \eqref{eqn: skdv}. In Section \ref{section: results}, we state the main results in this article. In Section \ref{section: galerkin}, we formulate our Galerkin approximation for a regularized system given in terms of the regularization coefficient $\epsilon>0$. Moreover, we apply a standard cutoff function to truncate all the nonlinear terms. We obtain a priori estimates and tightness of the laws of the solutions to the approximation equations. Then we apply the Skorohod Representation Theorem to obtain a sequence with the same distribution, that converges almost surely on (potentially) a new probability space, and pass to the limit to obtain a global martingale solution to the regularized equation \eqref{eqn: regularized}. The limit passages are as follows: In Section \ref{Ntoinfty}, we pass the Galerkin and the truncation parameter $N\to\infty$ {and the regularization coefficient $\eps \to 0$}. In Section \ref{section: pathwise uniqueness}, pathwise uniqueness of the solution is proven. Thanks to a well-known argument of Gy\"{o}ngy and Krylov \cite{GK96}, that generalizes to infinite dimensions the classical Yamada–Watanabe theorem, we conclude the existence of a unique pathwise solution to \eqref{eqn: skdv}. In Section \ref{section: invariant measure}, we consider a square-integrable L\'evy noise and take the damping coefficient $\gamma > 0$ to be large enough and prove the existence of an invariant measure using the Maslowski-Seidler Theorem \cite{MaSe99}.

\section{Mathematical Framework} \label{section: setup}
In this section we introduce the necessary notation and definitions.
We start by defining the appropriate functional spaces that are required for our analysis.
We will denote the one-dimensional torus by $\T$. Then, for any  $s \in \R$, we define,
\begin{equation}
    \begin{aligned}\label{Hs}
	    H^s(\T) & := \bigg\{ v\in L^2(\T):  \sum_{n \in \Z} (1+|n|^2)^s |\widehat{v}(n)|^2 < \infty   \bigg\},
	    \end{aligned}
\end{equation}
where $\widehat{v}(n)$ is the Fourier transform of $v$ defined by
\begin{equation} \label{fourier transform}
	\widehat{v}(n) := \frac{1}{\sqrt{2\pi}} \int_\T e^{-inx} v(x) dx.
\end{equation}
The Hilbert space $H^s(\T)$ defined in \eqref{Hs} is endowed with inner product 
\begin{equation*}
	\langle v_1, v_2  \rangle_{H^s(\T)} := \sum_{n \in \Z} (1+|n|^2)^s \widehat{v_1}(n) \overline{\widehat{v_2}(n)}, \ \ v_1, v_2 \in H^s(\T).
\end{equation*}
For any $m \in \N$ and $v \in H^m(\T)$, it follows from the definition of Fourier transform \eqref{fourier transform} that 
\begin{equation}\label{fourier transform of derivative}
	\widehat{\partial_x^k v}(n) = (in)^k \widehat{v}(n), \ \ k \leq m.
\end{equation}
Then by \eqref{fourier transform of derivative}, the Parseval identity, and the fact that there exists a constant $c = c(m) > 0$ such that
\begin{equation*}
	1+|n|^{2m} \leq (1+|n|^2)^m \leq c  (1+|n|^{2m}),
	\end{equation*}
we note the following equivalence of norms:
	\begin{equation} \label{Hs norm equivalent}
		|v|_{H^m(\T)} \sim |v|_{L^2(\T)} + |\partial_x^m v|_{L^2(\T)}, \ \ v \in H^m(\T).
	\end{equation}
Occasionally, we will abbreviate the norm notation $|\cdot|_{H^s(\T)}$ by excluding $\T$.

Given a Hilbert space $V$, we denote by $\mathcal{D}([0,T]; V)$, the collection of c\`adl\`ag functions i.e. right-continuous functions with left limits, from $[0,T]$ to $V$. The Skorohod space $\mathcal{D}([0,T]; V)$ is endowed with the so-called Skorohod topology, which is separable and metrizable; see \cite{billingsley2013convergence} for more details.

Next, we will describe the stochastic terms appearing on the right-hand side of \eqref{eqn: skdv with K} and explain the motivation behind our assumptions in the remark that follows (see Remark \ref{rem:noise}).  {Consider a separable real Hilbert space $U$. Let $E$ be a separable, complete metric space with Borel $\sigma$-field $\mathcal{E}$ and a measure $\nu$ that is finite on bounded Borel subsets of $E$.}
\begin{definition}\label{basis}
 A {\bf stochastic basis} is a multiplet $(\Omega, \mathcal{F}, (\mathcal{F}_t)_{t\geq 0}, \P, W, \pi)$ consisting of 
a filtered probability space $(\Omega, \mathcal{F}, (\mathcal{F}_t)_{t\geq 0},\P)$, a Wiener process $W$ and Poisson measure $\pi$ with the following properties. The filtration $(\mathcal{F}_t)_{t\geq 0}$ satisfies the ``usual" conditions; that is,  $\mathcal{F}_0$ is complete and that for any $t\geq 0$, $\mathcal{F}_t = \bigcap_{s > t} \mathcal{F}_s$. The processes $W$ and $\pi$ are independent and satisfy the following conditions: 
\begin{itemize}
    \item $W$ is a $U$-valued Wiener process with respect to the filtration $(\mathcal{F}_t)_{t \geq 0}$. 
  \item  $\pi$ is a Poisson random measure on $[0, \infty) \times E$ with intensity measure $dt \otimes d\nu$, that arises from a stationary $(\mathcal{F}_t)_{t\geq 0}$-Poisson point process $\Xi$. 
 We denote by $\widehat{\pi}$, the corresponding compensated Poisson random measure which is formally given by $\widehat{\pi}=\pi-\nu$. 
\end{itemize}
\end{definition}
\begin{remark}\label{rem:noise}
The assumptions on the noise are based on the following observation: let $L$ be an $(\mathcal{F}_t)_{t\geq 0}$-Lévy process, not necessarily square-integrable, taking values in a real, separable Hilbert space $U$. The Lévy–Khintchin decomposition \cite[Page 53]{peszat2007stochastic} says that formally we can write 
$L(t) = at + W(t) + \mathcal{L}(t) + \mathcal{P}(t)$,
where $a \in U$, $W$ is a Wiener process, $\mathcal{L}$ is a sum of independent compensated compound Poisson processes, $\mathcal{P}$ is a compound Poisson process and, the processes $W, \mathcal{L}$ and $\mathcal{P}$ are independent. Hence, it tells us that stochastic integration with respect to $L$ can be decomposed informally as
$dL = a dt + dW + d\widehat{\pi} + d\pi$,
where $\pi := \sum_{t>0} \delta_{(t, \Delta L(t))}$ is the jump measure of $L$, {which is a Poisson random measure on $[0,\infty)\times E$ for $E := U \setminus \{0\}$}, and $\delta_{(t, \Delta L(t))}$ is the Dirac measure at $(t, \Delta L(t))$; see \cite{peszat2007stochastic, CTT18compare} for more details. The jump measure $\pi$ is a Poisson random measure that satisfies the conditions listed above. The corresponding stationary $(\mathcal{F}_t)_{t\geq 0}$-Poisson point process $\Xi$ is induced by the jumps of $L$. The intensity measure of $\pi$ is $dt \otimes d\nu$, where $\nu$ is the Lévy measure of $L$. 
We note that $\nu(E)$ might not be finite, but $\nu$ is finite on bounded subset of $E$. In fact, $(E, \mathcal{E}, \nu)$ is a $\sigma$-finite measure space (see \eqref{appendix:sigma finite}). We define the set $E_0 :=  \{x \in U: 0 < |x|_U < 1 \}$. It follows that $\nu(E \setminus E_0) < \infty$. The set $E_0\in\mathcal{E}$ appearing in \eqref{eqn: skdv with K} is chosen such that $\nu(E \setminus E_0) < \infty$. As an example, we can set $E_0 :=  \{\xi \in U: 0 < |\xi|_U < 1 \}$.

\end{remark}

Next, let $\mathcal{Q}$ be the covariance operator of $W$, and define $U_0 := \mathcal{Q}^{1/2}(U)$.
Then, for the noise coefficients $G$ and $K$,  we impose the following conditions: For each $s = 0,1, 2$ we assume that
\begin{align*}
    G: H^s(\T) \to HS(U_0, H^s(\T)),\quad
    K : H^s(\T) \times E_0 \to H^s(\T), 
\end{align*}
where $HS(X, Y)$ stands for the space of Hilbert-Schmidt operators from a Hilbert space $X$ to a Hilbert space $Y$. Furthermore, we assume the following linear growth conditions and Lipschitz conditions on the noise coefficients in the spaces $L^2(\T), H^1(\T), H^2(\T)$: There exist constants $\kappa_1, \kappa_2, C > 0$ such that 
\begin{align} 
    \tag{A1} \label{assumption: growth of G}  \|G(v)\|^2_{HS(U_0, H^s)} & \leq \kappa_1 (1+|v|^2_{H^s}), \ v \in H^s(\T), \ s = 0, 1, 2 \\ 
    \tag{A2} \label{assumption: growth of K}  \int_{E_0} |K(v,\xi)|_{H^s}^2 d\nu(\xi) & \leq \kappa_2 (1+|v|^2_{H^s}), \ v \in H^s(\T), \ s = 0, 1, 2 \\
    \tag{A3} \label{assumption: further growth of K} \int_{E_0} |K(v, \xi)|^8_{L^2} d \nu(\xi) & \leq \kappa_2 (1+|v|^8_{L^2}), \ v \in L^2(\T),
\end{align}
and, for $s = 0, 1, 2$ we assume that
\begin{equation}\label{assumption: lipschitz}
	\tag{A4}
	\begin{aligned} 
      \|G(v) - G(w)\|^2_{HS(U_0, H^s)} + \int_{E_0} |K(v, \xi) - K(w,\xi)|^2_{H^s} d \nu(\xi) \leq C |v - w|^2_{H^s}, \ v, w \in H^s(\T). 
    \end{aligned}
\end{equation}
We note, by H\"{o}lder's inequality, that for $2 \leq p \leq 8$, there exists some constant $C = C(\kappa_2, p)$ such that
\begin{equation} \label{assumption:interpolate K}
    \int_{E_0} |K(v, \xi)|^p_{L^2} d \nu(\xi) \leq C(1+|v|^p_{L^2}),\qquad \ v \in L^2(\T).
\end{equation}
We assume that $\mathscr{K}: L^2(\T) \times E \to L^2(\T)$ is a measurable function. In contrast to $K$, we do not impose any growth or Lipschitz assumptions on $\mathscr{K}$.  The last term on the right-hand side of \eqref{eqn: skdv with K} represents the influence of large jumps from the L\'{e}vy noise. If the L\'{e}vy noise is not square-integrable, then the influence of large jumps cannot be represented without this term; see \cite{peszat2007stochastic,CTT18compare}.

\section{Main Results} \label{section: results}
{ 
 In this Section, we will introduce the necessary definitions and state the main results of this manuscript: the existence of a unique solution and that of an invariant measure. It is standard to prove the existence of appropriate solutions to stochastic PDEs driven by a L\'{e}vy noise by first considering a square integrable jump noise and then augmenting it by the effects of large jumps using the so-called `piecing out' argument. Moreover, to prove our second main result which gives the existence of an invariant measure, we discount the effects of the large jumps. Henceforth, we will set $\mathscr{K}=0$ in \eqref{eqn: skdv with K} and consider the following equations:
 \begin{equation}
 	\begin{cases}  \label{eqn: skdv}
 		du(t) + (\partial_x^3 u(t) +  u (t)\partial_x u(t) + \gamma u(t))dt = G(u(t-))  dW(t) +  \int_{E_0} K(u(t-), \xi) d\widehat{\pi}(t,\xi), \\
 		u(0) = u_0,
 	\end{cases}
 \end{equation}

We will begin by defining the two relevant notions of solutions to the damped stochastic KdV equation \eqref{eqn: skdv}.
 A martingale solution is a weak solution (in the probabilistic sense) where the stochastic basis, which includes the underlying probability space, the Wiener process and the Poisson random measure, is itself constructed as a part of the solution. Since the probability space is itself one of the unknowns
in a martingale solution, it is not possible to a priori specify the initial condition as a random variable and instead it is prescribed only in law by a given Borel probability measure. 
\begin{definition}[Martingale solutions to \eqref{eqn: skdv}] \label{definition: martingale} Assume that the given probability measure $\mu_0$ satisfies 
		\begin{align}\label{assm:mu0}
		\int_{L^2}|v|^8_{L^2}d\mu_0(v) + \int_{H^2}|v|^2_{H^2}d\mu_0(v)<\infty.
	\end{align} 
    A pair $(\TS, \Tu)$ is called a martingale solution to \eqref{eqn: skdv} if 
    \begin{itemize} 
    	\item  $\TS$ is a stochastic basis  $(\TOmega, \TF, (\TF_t)_{t \geq 0}, \TP, \widetilde{W}, \widetilde{\pi})$ as defined in Definition \ref{basis},
    	\item the process $\Tu$ satisfying 
    \begin{align*}
    & \Tu \in L^8(\TOmega, L^\infty(0,T; L^2(\T))) \cap L^2(\TOmega, L^\infty(0,T;H^2(\T)))
     \end{align*}
    is $(\TF_t)_{t\geq 0}$-adapted with c\`{a}dl\`{a}g paths in $H^{-2}(\T)$, $\TP$-a.s., 
    \item  $\Tu(0)$ is $\TF_0$-measurable with law $\mu_0$, and for every $t \in [0,T]$ the following equation,
    \begin{equation}
        \begin{aligned}
        \Tu(t) + \int_0^t (\partial_x^3 \Tu(s) + \Tu(s) \partial_x \Tu(s) + \gamma \Tu(s)) ds & = \Tu(0) + \int_0^t G(\Tu(s-)) d\widetilde{W}(s)\\
         & \quad + \int_{(0,t]}\int_{E_0} K(\Tu(s-),\xi) d\widehat{\Tpi}(s,\xi) 
        \end{aligned}
    \end{equation}
    holds (in the sense of distributions) $\TP$-a.s.
    \end{itemize}
\end{definition}
Next, we will give the definition of a pathwise solution to \eqref{eqn: skdv}. In this case, we construct an appropriate stochastic process solving \eqref{eqn: skdv} with respect to a pre-ordained stochastic basis.
\begin{definition}[Pathwise solutions to \eqref{eqn: skdv}]
     \label{definition: pathwise}
    Suppose that we are given a stochastic basis $\mathcal{S} = (\Omega, \F, (\F_t)_{t \geq 0}, \P, W, \pi)$ as in Definition \ref{basis}, and a random variable $u_0 \in L^8(\Omega, \F_0, \P; L^2(\T)) \cap L^2(\Omega, \F_0, \P; H^2(\T))$ with law $\mu_0$ satisfying \eqref{assm:mu0}. We say that an $(\F_t)_{t\geq 0}$-adapted process $u$ is a (global) pathwise solution to \eqref{eqn: skdv} if $u$ satisfying
    \begin{align*}
    u \in L^8(\Omega, L^\infty(0,T; L^2(\T))) \cap L^2(\Omega, L^\infty(0,T;H^2(\T)))
    \end{align*}
     takes c\`{a}dl\`ag  paths in $H^{-2}(\T)$, $\P$-a.s., and satisfies the following equation for every $t \in [0,T]$,
  \begin{equation}
	\begin{aligned}
		u(t) + \int_0^t (\partial_x^3 u(s) + u(s) \partial_x u(s) + \gamma u(s)) ds  & = u(0) + \int_0^t G(u(s-)) d W(s)\\
		&\quad + \int_{(0,t]}\int_{E_0} K(u(s-),\xi) d\widehat{\pi}(s,\xi) 
	\end{aligned}
\end{equation}
    $\P$-a.s.  (in the sense of distributions).
\end{definition}

We will now state one of the main results of this manuscript, namely the existence of a pathwise solution to \eqref{eqn: skdv}. 
\begin{theorem} \label{Thm: summary existence}
	Assume that we are given an $\F_0$-measurable random variable $u_0: \Omega \to H^2(\T)$ such that $u_0 \in L^8(\Omega, \F_0, \P; L^2(\T)) \cap L^2(\Omega, \F_0, \P; H^2(\T))$.
	We also assume that $G$ and $K$ satisfy the assumptions \eqref{assumption: growth of G}-\eqref{assumption: lipschitz}
	. Then there exists a unique pathwise solution $u$ to \eqref{eqn: skdv} in the sense of Definition \ref{definition: pathwise}, with initial condition $u_0$.
\end{theorem}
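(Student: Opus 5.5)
The plan is to follow the scheme announced in the introduction: build martingale solutions by compactness, prove pathwise uniqueness, and then apply the Gy\"ongy--Krylov criterion \cite{GK96} to obtain a pathwise solution on the given basis $\mathcal{S}$.

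\textbf{Approximation.} For $\epsilon>0$ I add the parabolic regularization $\epsilon\partial_x^4 u$ to \eqref{eqn: skdv}. I then take a Galerkin projection $P_N$ onto the Fourier modes $|n|\le N$. The nonlinearity $P_N(u\partial_x u)$ and the noise coefficients are multiplied by a cutoff $\theta(|u|_{H^2}/R)$, so the finite-dimensional SDE with Poisson jumps has globally Lipschitz coefficients. This gives a unique global c\`adl\`ag solution $u_N$.

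\textbf{Uniform estimates.} The estimates must hold uniformly in $N$ and $\epsilon$ (the truncation is removed by letting $R=N\to\infty$). They come from the conserved quantities of KdV:
\begin{align*}
I_0&=\int u^2, \\
I_1&=\int \Bigl((\partial_x u)^2-\tfrac13 u^3\Bigr), \\
I_2&=\int \Bigl((\partial_x^2u)^2-\tfrac53 u(\partial_xu)^2+\tfrac{5}{36}u^4\Bigr).
\end{align*}
I apply It\^o's formula for jump processes to $I_0^{4}$, to $I_1$ and to $I_2$. The dispersive term and the cubic nonlinearity cancel exactly at the level of the invariants; this is where the Galerkin/cutoff commutators need care, since $P_N$ commutes with $\partial_x$ but the cutoff factor must be handled. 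The damping $\gamma$ and the term $\epsilon\partial_x^4$ give good-signed or controllable terms. The It\^o corrections are second-order Taylor remainders of $I_k$ along jumps $K$ and along $G$. These are bounded using \eqref{assumption: growth of G}--\eqref{assumption: further growth of K}, Gagliardo--Nirenberg, and the $L^8$ bound on $|u|_{L^2}$; this is exactly why \eqref{assumption: further growth of K} and $u_0\in L^8$ are assumed, since the lower-order parts of $I_1$ and $I_2$ have polynomial growth in $|u|_{L^2}$. Burkholder--Davis--Gundy for both martingale parts then yields $\E\sup_t|u_N|_{L^2}^8+\E\sup_t|u_N|_{H^2}^2\le C$. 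Combined with a time-increment estimate in $H^{-2}$ (Aldous condition), this gives tightness of the laws in $\mathcal{D}([0,T];H^{-2})\cap L^2(0,T;H^1)$, and weakly in $L^2(0,T;H^2)$.

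\textbf{Passage to the limit.} Skorohod's representation theorem (in the Jakubowski form, for the nonmetric and Skorohod spaces) lets me pass $N\to\infty$ and $\epsilon\to0$. The noise terms converge by \eqref{assumption: lipschitz} and standard convergence results for stochastic integrals, and the nonlinear term converges by strong $L^2(H^1)$ convergence. This produces a martingale solution in the sense of Definition \ref{definition: martingale}.

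\textbf{Pathwise uniqueness.} Take two solutions $u,v$ on the same basis with the same initial datum, and set $w=u-v$. I apply It\^o to $|w|_{L^2}^2$. The nonlinear term gives
\[
\int (u\partial_x u-v\partial_x v)\,w=\tfrac12\int \partial_x(u+v)\,w^2\le C\,|\partial_x(u+v)|_{L^\infty}|w|^2_{L^2},
\]
and $|\partial_x(u+v)|_{L^\infty}\lesssim |u|_{H^2}+|v|_{H^2}$. Because $H^2$ is only controlled in $L^2(\Omega)$, I localize with the stopping times $\tau_R=\inf\{t:|u|_{H^2}+|v|_{H^2}>R\}$. Gronwall then gives $w=0$ up to $\tau_R$, and $\tau_R\to T$ almost surely. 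The Gy\"ongy--Krylov argument then upgrades existence to a pathwise solution on $\mathcal{S}$, and the same computation gives uniqueness.

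\textbf{Main obstacle.} I expect the hardest step to be the $H^2$ estimate via $I_2$. The It\^o jump corrections of the non-quadratic terms $u(\partial_xu)^2$ and $u^4$ must be controlled only through the linear-growth hypotheses. I will try to bound these remainders by $|K|_{H^1}^2|u|_{H^1}+|K|_{L^2}^4+|K|_{L^2}^2|u|_{L^2}^2$-type expressions and close the estimate using \eqref{assumption:interpolate K} and the lower-level bounds already obtained from $I_0$ and $I_1$.
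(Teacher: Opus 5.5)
Your architecture is the paper's: parabolic regularization, Galerkin truncation with a cutoff, It\^o's formula for $|u|_{L^2}^p$ and $\mathcal{I}_2$, Aldous and Aubin--Lions tightness, Jakubowski--Skorohod, $L^2$ pathwise uniqueness with a localization, and Gy\"ongy--Krylov. Your $\tau_R$ with ordinary Gronwall is an equivalent substitute for the paper's $\int_0^t|v|_{H^2}^2\,ds$ localization with the stochastic Gronwall lemma. $\mathcal{I}_1$ is unnecessary, since the paper interpolates between the $L^8$-moment bound and $H^2$.

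The step that fails as you set it up is the uniform $H^2$ bound, because of where the cutoff sits. You multiply only $P_N(u\partial_xu)$ (and the noise) by $\theta$ and leave $\partial_x^3u$ untouched. The KdV part of the drift of $\mathcal{I}_2(u)$ is then $-\theta\langle\mathcal{I}_2'(u),\partial_x^3u+P_N(u\partial_xu)\rangle-(1-\theta)\langle\mathcal{I}_2'(u),\partial_x^3u\rangle$. Even granting the KdV cancellation in the first term, the second equals
\[
(1-\theta)\Bigl(5\int_\T\partial_xu\,(\partial_x^2u)^2\,dx-\tfrac53\int_\T u(\partial_xu)^3\,dx\Bigr).
\]
This term is sign-indefinite, cubic at top order, and supported exactly where $|u|_{H^2}\ge R/2$. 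The only term available to absorb it is the $\eps|\partial_x^4u|_{L^2}^2$ dissipation, with constants that blow up as $\eps\to0$. So $\E\sup_t|u_N|_{H^2}^2\le C$ uniformly in $N=R$ and $\eps$ does not follow; for $\mathcal{I}_1$ the analogous leftover is $(1-\theta)\int_\T(\partial_xu)^3\,dx$. The paper avoids this by putting one factor on the whole KdV operator, $\theta_N(|u|_{H^1})\bigl(\partial_x^3u+P_N(u\partial_xu)\bigr)$, and not truncating the noise, so the factor pulls out of the pairing. Alternatively, drop the cutoff altogether: on $H_N$ the drift is locally Lipschitz and the exact $L^2$ balance rules out explosion.

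Second, ``$P_N$ commutes with $\partial_x$'' is not what the cancellation needs. For $u\in H_N$, since $\partial_x^4u\in H_N$ and $\langle\mathcal{I}_2'(u),\partial_x^3u+u\partial_xu\rangle=0$,
\[
\langle\mathcal{I}_2'(u),\partial_x^3u+P_N(u\partial_xu)\rangle=-\tfrac12\Bigl\langle(I-P_N)\bigl(-\tfrac53(\partial_xu)^2+\tfrac53\partial_x^2(u^2)+\tfrac59u^3\bigr),\,\partial_x(I-P_N)(u^2)\Bigr\rangle .
\]
The $\partial_x^2(u^2)$ part vanishes. For $\mathcal{I}_1$ everything vanishes, because the nonlinear part of $\mathcal{I}_1'(u)$ is a multiple of $u^2$. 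The other two parts of the $\mathcal{I}_2$ pairing do not vanish in general: for $N=2$ and $u=\cos x+\cos 2x+\sin 2x$ on $\R/2\pi\Z$ the right-hand side equals $5\pi/24$. So the Galerkin system does not conserve $\mathcal{I}_2$, and this commutator would have to be estimated uniformly in $N$ and $\eps$.

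The paper states the cancellation with the unprojected $u\partial_xu$, which is not the term that appears for the Galerkin system, so it shares this issue. A robust route has three steps:
\begin{enumerate}
\item At the Galerkin level, derive only $N$-uniform, $\eps$-dependent bounds in $L^\infty_tH^2\cap L^2_tH^4$, which the $\eps\partial_x^4$ term makes easy.
\item Pass $N\to\infty$.
\item Apply It\^o's formula to $\mathcal{I}_2$ for the regularized SPDE itself, where there is no projection and the cancellation is exact. This gives the $\eps$-uniform bounds needed to send $\eps\to0$.
\end{enumerate}
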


Next, we will state our second main result of the manuscript.
For any $u_0 \in H^2(\T)$, we denote by $u(t;u_0)$, the unique solution to $\eqref{eqn: skdv}$ with the initial condition $u_0$ given by the Theorem \ref{Thm: summary existence}. We define the transition probabilities as follows,
\begin{equation} 
	P_t(u_0, A) := \P( u(t;u_0) \in A ), \ t \geq 0, \ u_0 \in H^2(\T), \ A \in \mathcal{B}(H^2(\T)),
\end{equation}
where $\mathcal{B}(H^2(\T))$ is the collection of Borel measurable sets on $H^2(\T)$. The associated transition semigroup $P_t$ acting on $C_b(H^2(\T))$, the set of continuous, bounded and measurable functions from $H^2(\T)$ to $\R$, is then given by: 
\begin{equation} \label{Pt}
	P_t\phi(u_0) := \E[\phi(u(t; u_0))] = \int_{H^2} \phi(v) P_t(u_0, dv), \ t \geq 0, \ \phi \in C_b(H^2(\T)).
\end{equation}
 For any Borel probability measure $\mu \in \Pr(H^2(\T))$, we then define
\begin{equation} \label{transition prob with random initial}
	\mu P_t(A) := \int_{H^2} P_t(u_0, A) \mu(du_0), \ A \in \mathcal{B}(H^2(\T)),
\end{equation}
and say that $\mu \in \Pr(H^2(\T))$ is an \emph{invariant measure} for $\{P_t\}_{t \geq 0}$ if $\mu P_t = \mu$ for every $t \geq 0$. 

Now we are in position to precisely state our next main result.
\begin{theorem} \label{Thm: summary invariant}
 Assume that the noise coefficients $G$ and $K$ satisfy the assumptions \eqref{assumption: growth of G}-\eqref{assumption: lipschitz}.  Then for a sufficiently large damping coeffcient $\gamma > 0$ there exists an invariant measure $\mu \in \text{Pr}(H^2(\mathbb{T}))$ for  $\{P_t\}_{t\geq 0}$ describing the dynamics of the damped KdV equation  \eqref{eqn: skdv}$_1$ driven by a square-integrable L\'evy noise.
\end{theorem}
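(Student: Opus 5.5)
The proof will go through the Maslowski--Seidler theorem \cite{MaSe99}. It says that if $\{P_t\}$ is sequentially weakly Feller on a separable Hilbert space $H$ (here $H=H^2(\T)$), and for some initial datum $u_0$ the time averages $\frac1T\int_0^T \P(|u(t;u_0)|_{H}>R)\,dt$ tend to zero as $R\to\infty$ uniformly in $T\ge 1$, then an invariant measure exists. The key point is that this criterion uses bounded sets of $H^2$, which are weakly compact, so no compact embedding into a higher space is needed. The proof therefore splits into two tasks: a uniform-in-time moment bound for $|u(t)|_{H^2}$, and sequential weak Fellerness.

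\textbf{Step 1: uniform bounds.} I will take $u_0=0$ (or any deterministic $H^2$ datum) and apply It\^o's formula for jump processes to the KdV invariants. First, to $|u|_{L^2}^2$: the nonlinear and dispersive terms vanish, so
\begin{equation*}
d|u|^2_{L^2}+2\gamma|u|^2_{L^2}dt=\Big(\|G\|^2_{HS}+\int_{E_0}|K|^2_{L^2}d\nu\Big)dt+dM.
\end{equation*}
Combined with \eqref{assumption: growth of G}--\eqref{assumption: growth of K}, this gives $\sup_t\E|u(t)|^2_{L^2}<\infty$ once $2\gamma>\kappa_1+\kappa_2$. I will repeat the argument for $|u|_{L^2}^p$ with $p=4,8$, using \eqref{assumption:interpolate K} and Taylor expansion of the jump terms; this again gives uniform bounds for $\gamma$ large. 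Second, instead of $|u|^2_{H^1}$ I will use the modified energy $\int(\partial_xu)^2-\frac13u^3$, and at the $H^2$ level the standard KdV invariant $\int(\partial_x^2u)^2-\frac53 u(\partial_xu)^2+\frac5{36}u^4$, as in \cite{GMR21, Tem97}. These kill the dangerous top-order contributions of $u\partial_xu$, so that the drift becomes $-2\gamma$ times the functional plus lower-order terms. Those lower-order terms are controlled by Gagliardo--Nirenberg and Young in terms of $|u|_{L^2}$ powers, and this is why $L^8$ moments are needed. The It\^o correction and jump terms of the cubic and quartic parts also have to be estimated by Taylor expansion, using the growth assumptions in $H^1$, $H^2$ together with \eqref{assumption:interpolate K}. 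Gronwall with damping $e^{-\gamma t}$ then yields $\sup_{t\ge0}\E|u(t)|^2_{H^2}\le C$, and Chebyshev gives the tightness-type condition in the bounded-set form required.

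\textbf{Step 2: sequential weak Feller.} Let $u_0^n\rightharpoonup u_0$ weakly in $H^2$. I need $P_t\phi(u_0^n)\to P_t\phi(u_0)$ for $\phi$ sequentially weakly continuous and bounded on $H^2$. Weak convergence in $H^2$ implies strong convergence in $H^1$ and $L^2$, and keeps the $H^2$ norms bounded. Using the pathwise uniqueness argument of Section~\ref{section: pathwise uniqueness}, i.e.\ an $L^2$ stability estimate for the difference $w=u^n-u$ together with a stopping time at which $|u|_{H^2}$ and $|u^n|_{H^2}$ are at most $R$, I will show $u^n(t)\to u(t)$ in $L^2$ in probability. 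In that estimate the term $\int w\,\partial_x(u w)$ is bounded by $|\partial_xu|_{L^\infty}|w|^2_{L^2}$, and the Lipschitz condition \eqref{assumption: lipschitz} handles the noise. Since the $H^2$ moments of $u^n(t)$ are uniformly bounded by Step 1, for every subsequence I can extract a further subsequence along which $u^n(t)\rightharpoonup u(t)$ weakly in $H^2$ a.s. Dominated convergence then concludes Step 2.

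\textbf{Main obstacle.} I expect the main difficulty to be the $H^2$ estimate uniform in time with jump noise. Jumps in the cubic and quartic terms of the modified energy produce contributions like $\int_{E_0}\big(|u+K|^4_{L^4}-|u|^4_{L^4}\big)\,d\nu$, and only square-integrability in $H^s$ plus the $L^2$-eighth moment of $K$ is available. My first attempt will be to expand these terms algebraically, estimate every mixed term through $|K|_{L^2}$ and $|K|_{H^1}$ by Agmon's inequality, and absorb them using the large damping.
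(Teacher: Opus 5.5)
Your architecture is the paper's: Maslowski--Seidler on $V=H^2(\T)$, with condition (ii) coming from It\^o's formula for $|u|^p_{L^2}$, $p\le 8$, and for the invariant $\mathcal{I}_2$. In both, the It\^o and jump corrections are controlled by interpolation and H\"older in $\nu$, as in the estimates of $R_3,R_4$ and $D_3,D_4$. You differ in two places.

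In Step~1 you close a differential inequality for $\E\,\mathcal{I}_2(u(t))$. You write the damping drift as $-2\gamma\mathcal{I}_2$ plus lower order and use $\mathcal{I}_2(u)\ge\frac12|\partial_x^2u|^2_{L^2}-C(1+|u|^8_{L^2})$, which gives $\sup_{t\ge0}\E|u(t)|^2_{H^2}<\infty$. The paper stays in integrated form, moves the cubic terms to the right-hand side, and proves only $\int_0^t\E|u(s)|^2_{H^2}\,ds\le C(t+1)$. That weaker bound is exactly what the time-averaged condition \eqref{MS:tightness general} needs. Your bound is pointwise in time and stronger; both suffice.

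In Step~2 you replace the paper's Jakubowski--Skorohod construction plus uniqueness in law by a direct stability estimate on the original basis. You run the computation of Theorem~\ref{thm: pathwise uniqueness} with $w(0)=u_0^n-u_0$ and the localization $\int_0^t|u|^2_{H^2}\le R$. This gives $\E\sup_{t\le\tau_R}|w(t)|^2_{L^2}\le C_R|u_0^n-u_0|^2_{L^2}\to0$, since weak convergence in $H^2$ is strong in $L^2$. This is more elementary and more quantitative than the compactness route.

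One step does not follow as written. From $u^n(t)\to u(t)$ in probability in $L^2$ and $\sup_n\E|u^n(t)|^2_{H^2}<\infty$, you cannot extract a subsequence converging weakly in $H^2$ almost surely. Uniform moment bounds do not make $\sup_k|u^{n_k}(t,\omega)|_{H^2}$ finite a.s.; independent copies of an unbounded square-integrable variable are a counterexample. For such $\omega$ there is no weak convergence, since weakly convergent sequences are bounded.

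The repair is a truncation. On $B_M=\{|v|_{H^2}\le M\}$ the weak $H^2$ topology coincides with the $L^2$-norm topology, by the compact embedding. Hence $\phi|_{B_M}$ is uniformly continuous for $|\cdot|_{L^2}$, and
\begin{align*}
|P_t\phi(u_0^n)-P_t\phi(u_0)| &\le 2\|\phi\|_\infty\bigl(\P(|u^n(t)|_{H^2}>M)+\P(|u(t)|_{H^2}>M)\bigr)\\
&\quad+\E\bigl[|\phi(u^n(t))-\phi(u(t))|\,\mathbf{1}_{\{|u^n(t)|_{H^2}\le M,\ |u(t)|_{H^2}\le M\}}\bigr].
\end{align*}
The first term is $O(M^{-2})$ uniformly in $n$ by Chebyshev. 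The second tends to $0$ as $n\to\infty$ for fixed $M$, by the $L^2$ convergence in probability. With this fix, Step~2 goes through.
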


In what follows we will give proofs for Theorems \ref{Thm: summary existence} and \ref{Thm: summary invariant}. 

\section{Existence of pathwise solution}\label{sec:existence}
In this Section we will present a proof of our first main result Theorem \ref{Thm: summary existence}. The first step in the proof is to prove the existence of a martingale solution to \eqref{eqn: skdv} in the sense of Definition \ref{definition: martingale}. Then we will establish pathwise uniqueness of such solutions in an appropriate class which will imply the existence of a unique pathwise solution due to the classical theorem of Gyongy-Watanabe.

The proof of existence of a solution to the system  \eqref{eqn: skdv}  is based on three layers of approximations based on regularization, finite-dimensional approximation and truncation of nonlinearities. We first regularize the system \eqref{eqn: skdv} by augmenting it with artificial viscosity.  To prove well-posedness of this we employ a finite-dimensional Galerkin approximation. Moreover, this approximation system consists of truncated nonlinearities obtained by applying a cutoff function to the two nonlinear terms appearing on the left-hand side of  \eqref{eqn: skdv}. {The truncation gives us globally Lipschitz drift terms which ensures global-in-time well-posedness of the Galerkin scheme.} 
\subsection{Galerkin Scheme} \label{section: galerkin}
In this section we will construct solutions to approximations of the system \eqref{eqn: skdv}. 
Following \cite{Tem69, Tem97, DeBussche98,GMR21}, we first regularize the equation \eqref{eqn: skdv} by augmenting it with a regularization term $\eps \partial_x^4 u$, $0<\eps \leq 1$. 
\begin{equation}  \label{eqn: regularized}
\begin{cases}
	\begin{split}
    du_\eps(t) + \big(\partial_x^3 u_\eps(t) +  u_\eps(t) \partial_x u_\eps(t))& + \gamma u_\eps(t) + \eps \partial_x^4 u_\eps (t)\big) dt \\
    &= G(u_\eps(t-)) dW(t) +  \int_{E_0} K(u_\eps(t-), \xi) d\widehat{\pi}(t,\xi) ,  \\
    u_\eps(0) &= u_{ { 0} }  .
    \end{split}
    \end{cases}
\end{equation}
To establish the existence of solutions to \eqref{eqn: regularized} will next introduce a Galerkin approximation scheme.
For that purpose, we define $H_N$ as follows,
\begin{equation}
	H_N := \text{span} \bigl\{ e^{ikx}: |k| \leq N \bigr\}.
\end{equation}
We project \eqref{eqn: regularized} onto its Fourier modes $|k| \leq N$ and thus consider following $N$-th Galerkin scheme for any $N\in\mathbb{N}$:
\begin{equation}  \label{eqn: galerkin relabel}
	\begin{cases}
		du^N_\eps(t) + \bigg( \theta_N(|u^N_\eps(t)|_{H^1}) ( \partial_x^3 u^N_\eps (t)+ P_N( u^N_\eps (t)\partial_x u^N_\eps(t)) ) + \gamma u_\eps^N(t) + \eps \partial_x^4 u^N_\eps(t) \bigg) dt \\ 
		\hspace{5cm} = P_N G(u^N_\eps(t-)) dW(t) +  \int_{E_0} P_N K(u^N_\eps(t-), \xi) d\widehat{\pi}(t, \xi)  ,\\
		u^N_\eps(0, x) = { P_N u_{0} (x) } .
	\end{cases}
\end{equation}
Here, to deal with the stochasticity, we additionally truncated the nonlinear terms via a cutoff function $\theta_N$ defined below. Let $\theta(t):[0,\infty) \to \R$ be the smooth function such that 
	\begin{equation*} \label{truncation base function}
		\theta(x) := \begin{cases}
			1, \ x < \frac{1}{2}; \\
			0, \ x \geq 1.
		\end{cases}
	\end{equation*} 
	Define $\theta_N(x) := \theta(\frac{x}{N})$. Then we observe that for any $N>0$, $\theta_N: [0,\infty) \to [0,1]$ is a smooth function satisfying
\begin{equation} \label{truncation definition}
    \theta_N(x) = \begin{cases}
        1, \ x < \frac{N}{2}; \\
        0, \ x \geq N.
    \end{cases}
\end{equation}
 Moreover, we note that the cutoff function $\theta_N$ constructed in \eqref{truncation definition} is a Lipschitz function with the Lipschitz coefficient independent of $N$:
 \begin{equation} \label{truncation lipschitz}
 	|\theta_N'(x)| = \frac{1}{N} \biggl|\theta'\biggl(\frac{x}{N}\biggr) \biggr| \leq |\theta'|_{L^\infty}, \quad \forall N \in \N.
 \end{equation}

Note that, since, for any fixed $N \in\mathbb{N}$, the maps $v \mapsto \theta_N(|v|_{H^1}) v\partial_x v$, $v \mapsto \eps \partial_x^3 v$ and $v \mapsto \partial_x^4 v$ are all globally Lipschitz in $H_N$, and $G$ and $K$ satisfy the Lipschitz conditions \eqref{assumption: growth of G}-\eqref{assumption: growth of K}, there exists a unique pathwise solution $u_\eps^N$ to the SDE \eqref{eqn: galerkin relabel} thanks to \cite[Theorem 3.3]{CNTT20}.

\subsection{Moment estimates} \label{section: moment}
In this section we will obtain estimates for the Galerkin approximates $u^N_\eps$ independently of $N$ and $\eps$. Hereon, the notation $C$ will be used to denote a generic constant which, unless specified otherwise, will be independent of the parameters $N$ and $\eps$. 
To obtain the desired moment estimates, we begin by defining the following functionals
\begin{align*}
	\mathcal{I}_0(v) & := \int_\T (v(x))^2 dx, \\
	\mathcal{I}_1(v) & := \int_\T \frac{1}{2} \left( (\partial_x v(x))^2 - \frac{1}{6} (v(x))^3\right)  dx, \\
	  \mathcal{I}_2(v) & := \int_\T \left( (\partial_x^2 v(x))^2 - \frac{5}{3} v(\partial_x v(x))^2 + \frac{5}{36}(v(x))^4\right)  dx.
\end{align*}
These functionals correspond to appropriate conserved quantities for the deterministic KdV equation.
 In fact the deterministic KdV equation admits an infinite family of conserved quantities $\mathcal{I}_m, m \geq 0$, which are polynomials of $(u, \partial_x u, \cdots, \partial_x^m u)$, {with the leading order term given by the $L^2(\T)$-norm of $\partial_x^m u$}.  For precise definitions and detailed discussion of $\mathcal{I}_m$, the reader may refer to \cite[Chapter IV, Section 7]{Tem97}. We will use these functionals as we apply the Ito formula to the approximation system \eqref{eqn: galerkin relabel} to derive appropriate bounds.

Our major goal is to obtain bounds for the approximate solutions $u_\epsilon^N$ independently of $N$ in $L^2_\P L^\infty_t H^2_x$. Since our spatial domain is $\T$, we first seek to obtain uniform $L^8_\P L^\infty_t L^2_x$ estimates for $u_\eps^N$ which is done in the following theorem. 
\begin{theorem}[$L^8_\P L^\infty_t L^2_x$ regularity for $u_\eps^N$] \label{ape:thm1}

	Assume that $u_{0} \in L^8(\Omega, \F_0, \P; L^2(\T))$ and that the noise coefficients $G$ and $K$ satisfy the assumptions stated in \eqref{assumption: growth of G}-\eqref{assumption: lipschitz} Then for any $p\in[2,8]$, there exists a constant $ C = C(T, p, \kappa_1, \kappa_2) > 0$, independent of {$N$ and $\epsilon$}, such that 
\begin{equation} \label{ape: u_eps^N}
    \E\bigg[\sup_{t \in [0,T]} |u^{N}_\eps(t)|^p_{L^2} \bigg] + \eps p \E \int_0^T |u_\eps^N(s)|_{L^2}^{p-2}|\partial_x^2 u^N_\eps(s)|^2_{L^2}ds \leq  Ce^{CT} (\E|u_{0}|_{L^2}^p + 1).
\end{equation}
\end{theorem}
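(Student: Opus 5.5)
We would apply the Itô formula for jump processes (in the finite-dimensional space $H_N$) to the functional $\Phi(v) = |v|_{L^2}^p$, $p\in[2,8]$, evaluated along $u^N_\eps$. The starting point is that the deterministic drift contributes no $L^2$ energy from the dispersive and nonlinear terms. Indeed, $\langle \partial_x^3 v, v\rangle = 0$ by periodicity. Also $\langle P_N(v\partial_x v), v\rangle = \int_\T v^2\partial_x v\,dx = 0$ for $v\in H_N$, since $P_N$ is self-adjoint and $P_N v=v$. These cancellations hold regardless of the cutoff factor $\theta_N$. What remains from the drift is the damping term $-p\gamma|u|^p$ and the dissipation $-\eps p|u|^{p-2}|\partial_x^2 u|^2$. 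The latter is exactly the second term on the left of \eqref{ape: u_eps^N}.

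The Wiener part produces an Itô correction bounded by $C p^2|u|^{p-2}\|G(u)\|^2_{HS(U_0,L^2)}$. By \eqref{assumption: growth of G} with $s=0$ and Young's inequality, this is at most $C(1+|u|^p)$. The jump part produces the compensator term
\[
\int_{E_0}\Big(|u+P_NK(u,\xi)|^p-|u|^p-p|u|^{p-2}\langle u,P_NK(u,\xi)\rangle\Big)d\nu(\xi).
\]
By Taylor's formula, the integrand is bounded by $C_p(|u|^{p-2}|K|^2+|K|^p)$. Integrating in $\nu$ and using \eqref{assumption: growth of K} together with \eqref{assumption:interpolate K}, which is valid because $p\le 8$, gives again $C(1+|u|^p)$. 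This is the reason for the restriction $p\le 8$ and for assumption \eqref{assumption: further growth of K}. The bound also uses $|P_N K|_{L^2}\le |K|_{L^2}$.

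Next we take the supremum over $[0,t]$ and then expectations. The local martingales would be handled by a stopping-time localization $\tau_R=\inf\{t: |u^N_\eps(t)|_{L^2}>R\}$, removed at the end by Fatou's lemma. The Wiener stochastic integral would be controlled by the Burkholder–Davis–Gundy inequality: $\E\sup|\int p|u|^{p-2}\langle u,G\,dW\rangle|\le C\E(\int |u|^{2p-2}(1+|u|^2)ds)^{1/2}\le \tfrac14\E\sup|u|^p+C\E\int(1+|u|^p)ds$.

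The jump martingale $\int\!\int (|u_-+P_NK|^p-|u_-|^p)\,d\widehat\pi$ is the main obstacle. We would apply BDG with the quadratic variation, i.e. with $p=1$ in the form $\E\sup|M|\le C\E[M]^{1/2}$. The integrand is bounded by $C(|u_-|^{p-1}|K|+|K|^p)$, so its square is bounded by $C(|u|^{2p-2}|K|^2+|K|^{2p})$. We then split the square root as
\[
\Big(\int\!\int|u|^{2p-2}|K|^2d\pi\Big)^{1/2}\le \sup|u|^{p/2}\Big(\int\!\int|u|^{p-2}|K|^2d\pi\Big)^{1/2}.
\]
Young's inequality then yields $\tfrac14\E\sup|u|^p$ plus $C\E\int\!\int|u|^{p-2}|K|^2d\nu\,ds$, after replacing $\pi$ by its compensator in expectation. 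The $|K|^{2p}$ piece, with $2p$ possibly as large as $16$, cannot be treated by the same bound. Instead we use $(\sum a_i^2)^{1/2}\le\sum a_i$ to bound it by $\E\int\!\int|K|^{p}d\pi=\E\int\!\int|K|^pd\nu\,ds$, which is controlled by \eqref{assumption:interpolate K}.

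Absorbing the two $\tfrac14\E\sup|u|^p$ terms into the left-hand side gives
\[
\E\sup_{[0,t]}|u|^p+\eps p\,\E\int_0^t|u|^{p-2}|\partial_x^2u|^2\le \E|u_0|^p+C\int_0^t(1+\E\sup_{[0,s]}|u|^p)ds.
\]
Here we used $|P_Nu_0|\le|u_0|$ and simply discarded the damping term, since $\gamma>0$. Gronwall's inequality then yields \eqref{ape: u_eps^N}, with constants depending only on $T,p,\kappa_1,\kappa_2$.
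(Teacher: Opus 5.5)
Your argument is correct, and its skeleton is the paper's: It\^o's formula for $|v|_{L^2}^p$, cancellation of $\partial_x^3$ and the truncated nonlinearity, BDG for the Wiener integral, the growth bounds \eqref{assumption: growth of G}--\eqref{assumption: further growth of K}, and Gronwall. The genuine difference is how you split the jump part of It\^o's formula.

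The paper keeps only the linear term $p|u|^{p-2}(u,P_NK)_{L^2}$ in the compensated integral (its $I_2$). It integrates the second-order remainder $|u+P_NK|^p-|u|^p-p|u|^{p-2}(u,P_NK)_{L^2}$ against $\pi$ itself (its $I_5$). That remainder is nonnegative by convexity of $|\cdot|^p$, so $\E\sup_t|I_5|$ is simply $\E\int\int(\cdots)\,d\nu\,ds$, which is bounded via \eqref{ineq}, \eqref{assumption: growth of K} and \eqref{assumption: further growth of K}. Moreover, the only jump martingale then has squared integrand of order $|u|^{2p-2}|P_NK|^2$, which needs nothing beyond \eqref{assumption: growth of K}.

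You instead put the full increment $|u_-+P_NK|^p-|u_-|^p$ into the compensated integral and the remainder against $d\nu\,ds$. After localization this integrand is only in $L^1+L^2$: $R^{p-1}|K|$ is in $L^2$, while $|K|^p$ is only in $L^1$. So your martingale need not be square integrable. This is why you need the Davis (exponent-one BDG) inequality, and the counting-measure bound $(\sum a_i^2)^{1/2}\le\sum a_i$ to avoid $\int|K|^{2p}\,d\nu$, i.e.\ moments of $K$ of order up to $16$. Both steps are valid, but they are extra work the paper's decomposition sidesteps.

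In return, your treatment of the square-function term is more careful than the paper's display for $I_2$. You factor $\sup|u|^{p/2}$ out of the square root, apply Young, and only then replace $\pi$ by $dt\otimes\nu$ inside an expectation. The paper instead passes from $(\E\int\int\cdots\,d\pi)^{1/2}$ to $C\,\E(\int\cdots\,ds)^{1/2}$, which goes in the wrong direction of Jensen's inequality. Your stopping times $\tau_R$ also make explicit the finiteness of $\E\sup|u^N_\eps|^p$ needed to absorb the $\tfrac14\E\sup|u^N_\eps|^p$ terms, which the paper leaves implicit.
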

\begin{proof}
We begin introduce the following functional which is a slight modification of $\mathcal{I}_0:L^2(\T)\to\R$:
\begin{equation*}
	\tilde{\mathcal{I}}_0(v) := |v|_{L^2}^p.
\end{equation*} 
The purpose of this modified functional is to obtain the higher order moment bounds claimed in \eqref{ape: u_eps^N}.
Observe that the first and second Fr\'echet derivatives ${\tilde {\mathcal{I}}}_0': L^2(\T) \to L^{2}(\T)$ and $\tilde{\mathcal{I}}_0'': L^2(\T) \to L(L^2(\T), L^{2}(\T))$ of $\tilde{\mathcal{I}}_0$ are given by, 
    \begin{align*}
    \tilde{\mathcal{I}}'_0(v) = p|v|_{L^2}^{p-2}\langle v, \cdot \rangle_{L^2}, \qquad
   \tilde{\mathcal{I}}''_0(v) = p(p-2)|v|_{L^2}^{p-4}\langle v, \cdot \rangle_{L^2} v + p|v|_{L^2}^{p-2} I.
    \end{align*}
We now apply It\^o's formula with the functional $\tilde{\mathcal{I}}_0(u_\eps^N)$ to our approximation system \eqref{eqn: galerkin relabel}. This gives us,
\begin{equation} \label{ape: u_eps^N LHS 1}
    \begin{aligned}
        &  |u^N_\eps(t)|_{L^2}^p + p \int_0^t |u^N_\eps(s)|_{L^2}^{p-2} (u_\eps^N(s),  u_\eps^N(s) \partial_x u_\eps^N(s))_{L^2} \theta_N(|u_\eps^N(s)|_{H^1})ds + \gamma p \int_0^t |u_\eps^N(s)|_{L^2}^p ds  \\
        & + p \int_0^t |u^N_\eps(s)|_{L^2(s)}^{p-2} \left( u^N_\eps(s),  \partial_x^3 u_\eps^N(s)\right) _{L^2} \theta_N(|u_\eps^N(s)|_{H^1})ds   + \eps p \int_0^t |u_\eps^N(s)|_{L^2}^{p-2} (u_\eps^N(s), \partial_x^4 u_\eps^N(s))_{L^2}ds \\  
     & = |u^N_\eps(0)|_{L^2}^p + p\int_0^t |u^N_\eps(s)|_{L^2}^{p-2}(u_\eps^N(s), P_NG(u^N_\eps(s))dW(s))_{L^2} \\
     & + p\int_0^t \int_{E_0} |u^N_\eps(s)|_{L^2}^{p-2}(u^N_\eps(s), P_N K(u^N_\eps(s), \xi))_{L^2} d\widehat{\pi}(\xi,s)  \\
        & + \frac{p}{2} \int_0^t |u^N_\eps(s)|_{L^2}^{p-2} \|G(u^N_\eps(s))\|_{HS(U_0, L^2)}^2  ds + \frac{p(p-2)}{2}\int_0^t |u_\eps^N(s)|_{L^2}^{p-4}\|G^*(u^N_\eps(s)) u^N_\eps(s)\|^2_{U_0} ds \\
        & + \int_0^t \int_{E_0} |u_\eps^N(s) + P_N K(u_\eps^N(s), \xi)|_{L^2}^p - |u_\eps^N(s)|_{L^2}^p  - p|u_\eps^N(s)|_{L^2}^{p-2}(u_\eps^N(s), P_N K(u_\eps^N(s), \xi))_{L^2} d\pi(\xi,s) \\
        & =: |u^N_\eps(0)|_{L^2}^p + I_1 + I_2 + I_3 + I_4 + I_5,
    \end{aligned}
\end{equation}
where $G^*$ is the adjoint operator of $G$.
Before estimating the terms on the right hand side, we make some important observations about the terms on the left-hand side of \eqref{ape: u_eps^N LHS 1}. Note that the second and third term on the left-hand side of \eqref{ape: u_eps^N LHS 1} vanish after we integrate by parts as follows: 
\begin{align*}
	\theta_N(|u_\eps^N(s)|_{H^1}) \lp u_\eps^N(s),  u_\eps^N(s) \partial_x u_\eps^N(s) \rp_{L^2}  &  =  \theta_N(|u_\eps^N(s)|_{H^1}) \int_{\T} \frac{1}{3} \partial_x (u_\eps^N(s, x))^3 dx \\
	& =  \theta_N(|u_\eps^N(s)|_{H^1}) \frac{1}{3} (u_\eps^N(s, x))^3\bigg|^{x=1}_{x=0} = 0,
\end{align*}
and, 
\begin{align*}
	\theta_N(|u_\eps^N(s)|_{H^1}) (u^N_\eps(s), \partial_x^3 u_\eps^N(s))_{L^2}  & = \theta_N(|u_\eps^N(s)|_{H^1}) \bigg[ u^N_\eps(s,x) \partial_x^2 u_\eps^N(s,x)\bigg|^{x=1}_{x=0} - \int_0^1 \partial_x u^N_\eps \partial_x^2 u_\eps^N dx \bigg] \\
	& = -\theta_N(|u_\eps^N(s)|_{H^1}) \int_0^1 \frac{1}{2} \partial_x [(\partial_x u_\eps^N(s,x))^2] dx \\
	& = -\frac{1}{2} \theta_N(|u_\eps^N(s)|_{H^1}) (\partial_x u_\eps^N(s,x))^2 \bigg|_{x=0}^{x=1} = 0.
\end{align*}
Moreover, {after integrating by parts twice}, we can rewrite the last term on the left-hand side of \eqref{ape: u_eps^N LHS 1}, as follows
\begin{align*}
	\eps p \int_0^t |u_\eps^N(s)|^{p-2} (u_\eps^N(s), \partial_x^4 u_\eps^N(s))_{L^2}  ds =  \eps p \int_0^t |u_\eps^N(s)|^{p-2} |\partial_x^2 u_\eps^N(s)|^2 ds.
\end{align*}
Hence, after applying the three equalities above, the left-side of the equation \eqref{ape: u_eps^N LHS 1} simplifies down to the following expression: 
\begin{equation} \label{ape: u_eps^N LHS 2}
    \begin{aligned}
    \text{LHS of \eqref{ape: u_eps^N LHS 1}}   & = |u^N_\eps(t)|_{L^2}^p + \gamma p \int_0^t |u^N_\eps(s)|_{L^2}^p ds +  \eps p \int_0^t |u_\eps^N(s)|_{L^2}^{p-2} |\partial_x^2 u_\eps^N(s)|_{L^2}^2 ds.
    \end{aligned}
\end{equation}
Next, we will estimate the right-hand side terms of \eqref{ape: u_eps^N LHS 1} and treat each term $I_j;j=1,2,..,5$ individually. We first apply $\sup_{t \in [0,t']}$ for any $t'\geq 0$ on both sides of the equation \eqref{ape: u_eps^N LHS 1} and then take expectation. Using the Burkholder-Davis-Gundy (BDG) inequality and the growth assumption \eqref{assumption: growth of G} we obtain for $I_1$ that, 
\begin{equation} \label{eqn: ape wiener term}
\begin{aligned}
    \E \sup_{t\in [0,t']} |I_1(t)| 
    & \leq C \E \lp \int_0^{t'} |u^{N}_\eps(s)|_{L^2}^{2(p-1)}\|G(u^N_\eps(s))\|_{HS(U_0, L^2)}^2 ds \rp^{1/2} \\
     & \leq C \E \lp \int_0^{t'} |u^{N}_\eps(s)|_{L^2}^{2(p-1)}(1+|u_\eps^N(s)|_{L^2}^2) ds \rp^{1/2}  \\
& \leq C \E \lp\sup_{t \in [0,t']}|u^{N}_\eps(t)|_{L^2}^p \int_0^{t'} |u^{N}_\eps(s)|_{L^2}^{p-2}(1+|u_\eps^N(s)|_{L^2}^2) ds \rp^{1/2}  \\
& \leq \frac{1}{4} \E\sup_{t \in [0,t']}|u^N_\eps(t)|_{L^2}^p + C \E\int_0^{t'} |u_{\eps}^N(s)|^{p-2}_{L^2} ds + C \E\int_0^{t'} |u_{\eps}^N(s)|^{p}_{L^2} ds \\
& \leq \frac{1}{4} \E\sup_{t \in [0,t']}|u^N_\eps(t)|_{L^2}^p +  C \lp 1 + \E\int_0^{t'} |u_{\eps}^N(s)|^{p}_{L^2} ds \rp,
\end{aligned}
\end{equation}
where the constant $C = C(p,T)$ depends only on $p$ and $T$. A similar argument, applied to the term $I_2$ gives us, for some constant $C>0$, that
\begin{equation} \label{eqn: ape CPRM term}
\begin{aligned} 
    \E \sup_{t\in [0,t']} |I_2(t)|
    & \leq C \lp \E \int_0^{t'} \int_{E_0} |u^N_\eps(s)|_{L^2}^{2p-2}|P_N K(u^N_\eps(s), \xi))|^2_{L^2} d\pi(\xi,s) \rp^{1/2} \\
    & \leq C \E \lp \int_0^{t'} |u^{N}_\eps(s)|_{L^2}^{2(p-1)}(1+|u_\eps^N(s)|_{L^2}^2) ds \rp^{1/2}  \\
& \leq C \E \lp\sup_{t \in [0,t']}|u^{N}_\eps(t)|_{L^2}^p \int_0^{t'} |u^{N}_\eps(s)|_{L^2}^{p-2}(1+|u_\eps^N(s)|_{L^2}^2) ds \rp^{1/2}  \\
& \leq \frac{1}{4} \E\sup_{t \in [0,t']}|u^N_\eps(t)|_{L^2}^p + C \E\int_0^{t'} |u_{\eps}^N(s)|^{p-2}_{L^2} ds + C \E\int_0^{t'} |u_{\eps}^N(s)|^{p}_{L^2} ds \\
& \leq \frac{1}{4} \E\sup_{t \in [0,t']}|u^N_\eps(t)|_{L^2}^p +  C \lp 1 + \E\int_0^{t'} |u_{\eps}^N(s)|^{p}_{L^2} ds \rp,
\end{aligned}
\end{equation}
Next we consider the term $I_3$. Recall the growth condition on $G$ stated in \eqref{assumption: growth of G}.  Under this growth assumption, we obtain
\begin{equation*}
\begin{aligned}
    \E \sup_{t\in [0,t']} |I_3(t)| & \leq C \E \int_0^{t'} |u_\eps^N(s)|^{p-2}_{L^2} (1 + |u_\eps^N(s)|^{2}_{L^2}) ds
    \leq C\lp 1 + \E \int_0^{t'} |u_\eps^N(s)|^{p}_{L^2}ds  \rp.
\end{aligned}
\end{equation*}
To estimate $I_4$ term, we note that \eqref{assumption: growth of G} gives us,
\begin{equation*}
\begin{aligned}
    \|G(u^N_\eps(s))^* u^N_\eps(s)\|_{U_0}^2 &\leq \|G(u^N_\eps(s))^*\|_{HS( U_0,L^2)}^2 |u^N_\eps(s)|^2_{L^2} \\
    & = \|G(u^N_\eps(s))\|_{HS(U_0, L^2)}^2 |u^N_\eps(s)|^2_{L^2} \leq C(1+|u^N_\eps(s)|_{L^2}^2)|u^N_\eps(s)|^2_{L^2}.
\end{aligned}
\end{equation*}
Hence, for $I_4$ we obtain the following bounds,
\begin{equation}
\begin{aligned}
    \E \sup_{t\in [0,t']} |I_4(t)| & \leq C \E \int_0^{t'} |u_\eps^N(s)|^{p-2}_{L^2} \left( 1 + |u_\eps^N(s)|^{2}_{L^2}\right) ds  
     \leq C\lp 1 + \E \int_0^{t'} |u_\eps^N(s)|^{p}_{L^2}ds  \rp.
\end{aligned}
\end{equation}
To estimate $I_5$, we begin by recalling the following standard inequality given for any Hilbert space $H$ and $y, k \in H$ (see e.g. \cite{CNTT20}):
\begin{equation}\label{ineq}
    \bigg| |y+k|_H^p - |y|_H^p - p|y|_H^{p-2}\langle y,k \rangle_H \bigg| \leq (p^2-p)2^{p-3}\bigg(|y|_{H}^{p-2}|k|_H^2 + |k|_H^p\bigg),
\end{equation}
We take ${H := L^2(\T)}, y := u_\eps^N(s)$ and $k := P_NK(u_\eps^N(s), \xi)$ in \eqref{ineq} and invoke the growth condition \eqref{assumption: further growth of K} to get
\begin{equation}
    \begin{aligned}
        \E \sup_{t\in [0,t']} |I_5| & \leq C \E \int_0^{t'}\int_{E_0} |u_\eps^N(s)|_{L^2}^{p-2} |K(u_\eps^N(s), \xi)|_{L^2}^2 + |K(u_\eps^N(s), \xi)|_{L^2}^p d\nu(\xi) ds \\
        & \leq C \E \int_0^{t'}|u_\eps^N(s)|_{L^2}^{p-2} (1+|u_\eps^N(s)|_{L^2}^2) + (1+|u_\eps^N(s)|_{L^2}^p) ds\leq C \lp 1 + \E \int_0^{t'} |u_\eps^N(s)|^{p}_{L^2}  ds \rp.
    \end{aligned}
\end{equation}
Therefore, collecting all the terms $I_j; 1\leq j\leq 5$ and using \eqref{ape: u_eps^N LHS 2} in \eqref{ape: u_eps^N LHS 1}, we obtain
\begin{equation*}
    \begin{aligned}
        \E\sup_{t \in [0,t']} \bigg[|u^N_\eps(t)|_{L^2}^p + \gamma p \int_0^{t'} |u_\eps^N(s)|^{p}_{L^2} ds  +  \eps p \int_0^{t'} |u_\eps^N(s)|_{L^2}^{p-2} |\partial_x^2 u_\eps^N(s)|_{L^2}^2 ds  \bigg] \\
        \leq \frac{1}{2} \E \sup_{t \in [0,t']} |u_{\eps}^N(t)|_{L^2}^p + C \lp 1 + \E \int_0^{t'} |u_{\eps}^N(s)|^p_{L^2} ds \rp.
    \end{aligned}
\end{equation*}
Rearranging the terms gives us,
\begin{equation*}
    \begin{aligned}
        \E\sup_{t \in [0,t']} \bigg[ |u^N_\eps(t)|_{L^2}^p \bigg] + 2\gamma p \int_0^{T} |u_\eps^N(s)|^{p}_{L^2}  ds + 2 \eps p \E \int_0^{t'} |u_\eps^N(s)|_{L^2}^{p-2} |\partial_x^2 u_\eps^N(s)|_{L^2}^2 ds   \\
        \leq C \lp \E|u_{0}|_{L^2}^p + 1 +  \int_0^{t'} \E\sup_{t'' \in [0,s]}|u_{\eps}^N(t'')|^p_{L^2} ds \rp.
    \end{aligned}
\end{equation*}
Finally applying the Gronwall inequality, and setting $t' := T$, we arrive at 
\begin{equation*}
    \E\sup_{t \in [0,T]} \bigg[|u^N_\eps(t)|_{L^2}^p\bigg] + \eps p \E \int_0^T |u_\eps^N(s)|^{p-2}_{L^2} |\partial_x^2 u_\eps^N(s)|_{L^2}^2 ds   \leq C e^{CT} (\E|u_{0}|^p + 1),
\end{equation*}
where $C = C(p, T, \kappa_1, \kappa_2)$ is independent of $\eps, N$. 
\end{proof}
Next, we will obtain uniform estimates for $u_\eps^N$ in more regular spaces; specifically 
 in $L^2_\P L^\infty_t H^2_x$  using the invariant $\mathcal{I}_2$.
To prove these bounds, we shall use several classical inequalities, such as Agmon's inequality in one dimension and the Sobolev interpolation inequality, that we recollect below. We first recall {\bf Agmon's inequality} (see, e.g., \cite[Chapter II, Section 1.3]{Tem97}): For $v \in H^s(\T)$, $s \geq 1$, 
\begin{equation} \label{agmon}
	|v|_{L^\infty(\T)} \leq C |v|_{L^2(\T)}^{1/2} \left( |\partial_x v|_{L^2(\T)} + |v|_{L^2(\T)}\right) ^{1/2}.
\end{equation}
Next, we state {\bf Sobolev interpolation inequality}: Let $0 \leq s_1 < s < s_2$ and $\theta \in (0,1)$ such that $s = (1-\theta)s_1 + \theta s_2$. Suppose $v \in H^{s_2}(\T)$. Then 
\begin{equation} \label{interpolation}
	|v|_{H^s(\T)} \leq C |v|_{H^{s_1}(\T)}^{1-\theta} |v|_{H^{s_2}(\T)}^\theta,
\end{equation} 
where $C$ is a generic constant depending only on $s_1, s_2, \theta$. We will abbreviate this interpolation inequality as $H^s(\T) = [H^{s_2}(\T), H^{s_1}(\T)]_\theta$.
We now turn to proving the following higher regularity estimates. 
\begin{theorem}[$L^2_\P L^\infty_t H^2_x$ estimates for $u_\eps^N$] \label{ape:thm3}
    Let $u_{0} \in L^8(\Omega, \F_0, \P; L^2(\T)) \cap L^2(\Omega, \F_0, \P; H^2(\T))$ and assume that the noise coefficients $G$ and $K$ satisfy \eqref{assumption: growth of G}-\eqref{assumption: lipschitz} Then there exists a constant $C = C(\kappa_1, \kappa_2, T) > 0$, independent of $N$ and $ \eps$, such that
        \begin{equation}\label{ape: partial^2 u_eps^N}
            \E\biggl[ \sup_{t \in [0,T]} |\partial^2_x u^{N}_\eps(t)|_{L^2}^2 \biggr] + \eps \E \int_0^T |\partial_x^4 u^N_\eps(s)|_{L^2}^2\,ds \leq Ce^{CT}\lp \E|u_0|_{L^2}^8 +  \E|\partial_x^2 u_0|_{L^2}^2 + 1  \rp.
        \end{equation}
Consequently, we have, for some $C>0$ independent of $N$ and $\eps$, that
    \begin{equation}\label{ape: h2}
    	\E\biggl[ \sup_{t \in [0,T]} |u^{N}_\eps(t)|_{H^2}^2 \biggr] + \eps \E \int_0^T |u^N_\eps(s)|_{H^4}^2\,ds \leq Ce^{CT}\lp \E|u_0|_{L^2}^8 +  \E|\partial_x^2 u_0|_{L^2}^2 + 1  \rp.
    \end{equation}
    \end{theorem}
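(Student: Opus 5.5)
We will apply It\^o's formula to the functional $\mathcal{I}_2(u^N_\eps)$ rather than to $|\partial_x^2 u^N_\eps|_{L^2}^2$ alone. The point is that $\mathcal{I}_2$ is a conserved quantity of the deterministic KdV flow, so the contributions of $\partial_x^3 u$ and $u\partial_x u$ cancel exactly. We will check on $H_N$ that
\[
\bigl(\mathcal{I}_2'(v),\, \partial_x^3 v + P_N(v\partial_x v)\bigr)=0 .
\]
This identity requires care because of $P_N$. Since $\mathcal{I}_2'(v)$ is not in $H_N$, we will write $\mathcal{I}_2'(v)=2\partial_x^4 v + \text{(lower order nonlinear terms)}$. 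We will then either verify that the commutator terms arising from $P_N$ are controlled, or modify the Galerkin scheme so that the cancellation holds exactly. Since the truncation $\theta_N$ multiplies both dispersive terms, it factors out of the cancellation. The remaining deterministic terms are the following.
\begin{itemize}
\item The damping contribution $\gamma(\mathcal{I}_2'(v),v)$. Its leading part is $2\gamma|\partial_x^2 v|^2$, and its cubic and quartic parts are bounded by lower order quantities.
\item The viscous contribution $\eps(\mathcal{I}_2'(v),\partial_x^4 v)$. Its leading part is $2\eps|\partial_x^4 v|^2$, which produces the dissipative term in \eqref{ape: partial^2 u_eps^N}.
\item The viscous cross terms $\eps\int v(\partial_x v)\,\partial_x^5 v$ and $\eps\int v^3\partial_x^4 v$. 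These are absorbed into $\eps|\partial_x^4 v|^2$ using Agmon \eqref{agmon}, interpolation \eqref{interpolation}, and Young, at the price of terms like $\eps C|v|_{L^2}^{a}|\partial_x^2 v|^2$ plus powers of $|v|_{L^2}$. Those are in turn handled by Theorem~\ref{ape:thm1}, which controls $\eps\E\int|v|_{L^2}^{p-2}|\partial_x^2 v|^2$ for $p\le 8$.
\end{itemize}

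The stochastic terms come next. The It\^o correction for $G$ involves $\mathcal{I}_2''$. Its principal part $2\|\partial_x^2 G\|_{HS}^2$ is bounded by \eqref{assumption: growth of G} with $s=2$. The lower order parts, such as $\int v\,(\partial_x G e_k)^2$ and $\int v^2 (Ge_k)^2$, are bounded via Agmon by $C(1+|v|_{L^2}^{8}+|\partial_x^2 v|^2)$. This is where the eighth moment in $L^2$ enters. The jump term is the compensated-measure analogue,
\[
\mathcal{I}_2(v+K)-\mathcal{I}_2(v)-\mathcal{I}_2'(v)K .
\]
We will expand the polynomial structure of $\mathcal{I}_2$ explicitly and use \eqref{assumption: growth of K}, \eqref{assumption: further growth of K} and \eqref{assumption:interpolate K}. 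The aim is a bound of the form $C(1+|v|_{L^2}^8+|v|_{H^2}^2)$; the quartic term $\int(v+K)^4$ is what forces the $L^8$ assumptions. The martingale terms are handled with BDG exactly as in \eqref{eqn: ape wiener term}: we bound by $\tfrac14\E\sup|\partial_x^2 u|^2$ plus integrated lower order quantities.

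Once this is done, we will convert from $\mathcal{I}_2$ back to $|\partial_x^2 v|_{L^2}^2$ using the coercivity estimate
\[
\tfrac12|\partial_x^2 v|^2 - C(1+|v|_{L^2}^{6}+|v|_{L^2}^{10/3}) \le \mathcal{I}_2(v)\le 2|\partial_x^2 v|^2 + C(1+|v|_{L^2}^{6}).
\]
This follows from Gagliardo--Nirenberg, since $|\int v(\partial_x v)^2|\le |v|_\infty|\partial_x v|^2$, combined with interpolation between $L^2$ and $H^2$ and Young. Higher powers of $|v|_{L^2}$ such as the sixth are again controlled by Theorem~\ref{ape:thm1} (with $p=8$). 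Gronwall then gives \eqref{ape: partial^2 u_eps^N}, and \eqref{ape: h2} follows from the norm equivalence \eqref{Hs norm equivalent} together with Theorem~\ref{ape:thm1}.

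The main obstacle is the bookkeeping of the exponents. Every lower order term must be reduced to $|v|_{L^2}$ powers of at most $8$ together with $|\partial_x^2 v|^2$ appearing linearly, so that Gronwall closes at the level of second moments. The most delicate term is the quartic contribution of the jump correction. A secondary concern is the exact cancellation under the Galerkin projection. If it fails, our first fallback is to estimate the commutators using $\theta_N$ (which bounds $|v|_{H^1}\le N$) and show that their contributions are harmless.
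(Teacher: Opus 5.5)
Your outline coincides with the paper's proof. It uses It\^o's formula for $\mathcal{I}_2(u^N_\eps)$ and BDG for the two martingale terms. The It\^o and jump corrections are bounded term by term and reduced to $C(1+|u^N_\eps|^8_{L^2}+|\partial_x^2u^N_\eps|^2_{L^2})$ via Theorem \ref{ape:thm1}. The viscous cross terms are absorbed into $\eps|\partial_x^4u^N_\eps|^2_{L^2}$; the paper does this directly, at the price of $C\eps(1+|u^N_\eps|^8_{L^2})$, rather than through the weighted integral of Theorem \ref{ape:thm1}, and both work. The term $-\frac53\int_\T u(\partial_xu)^2\,dx$ is controlled by $\frac18|\partial_x^2u|^2_{L^2}+C(1+|u|^8_{L^2})$, which is your coercivity step, and Gronwall finishes. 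However, the step you leave open is a genuine gap, and the paper does not close it either. The identity stated right after \eqref{ito3} is written with $u^N_\eps\partial_xu^N_\eps$ where the Galerkin drift has $P_N(u^N_\eps\partial_xu^N_\eps)$, so a commutator term is silently dropped.

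Concretely, for $v\in H_N$ write $\mathcal{I}_2'(v)=2\partial_x^4v+\frac53\partial_x^2(v^2)+F(v)$ with $F(v):=-\frac53(\partial_xv)^2+\frac59v^3$. Use $\partial_x^4v\in H_N$, the exact identity $(\mathcal{I}_2'(v),\partial_x^3v+v\partial_xv)=0$, and $v\partial_xv=\frac12\partial_x(v^2)$, which removes the $\partial_x^2(v^2)$ part. One gets
\[
\bigl(\mathcal{I}_2'(v),\,\partial_x^3v+P_N(v\partial_xv)\bigr)=-\bigl((I-P_N)F(v),\,(I-P_N)(v\partial_xv)\bigr),
\]
which does not vanish in general, so exact cancellation is not available for \eqref{eqn: galerkin relabel}.

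Your fallback uses only $|v|_{H^1}\le N$ on $\{\theta_N\neq0\}$, and that does not by itself give an $N$-uniform bound. The direct estimate with $|(I-P_N)w|_{L^2}\le N^{-1}|\partial_xw|_{L^2}$ gives only $CN^{-1}|v|^2_{H^1}|v|^2_{H^2}\le CN|v|^2_{H^2}$. Extracting further powers of $N^{-1}$ from $I-P_N$ costs derivatives beyond $H^2$, and those are exactly what the artificial viscosity controls. Apply $|(I-P_N)w|_{L^2}\le N^{-s}|w|_{H^s}$ with $s=1$ to $(\partial_xv)^2$ and $s=2$ to $v\partial_xv$. For the $v^3$ piece do it the other way round, which gives $CN^{-3}|v|^3_{H^1}|v|^2_{H^2}\le C|v|^2_{H^2}$. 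Then use $|v|_{H^1}\le N$ and interpolate into $H^4$. The commutator is then bounded by $\frac{\eps}{50}|\partial_x^4v|^2_{L^2}+C(1+|v|^2_{H^2})$, with $C$ independent of $\eps$ and $N$ once $N\ge\eps^{-1}$. This gives the estimate for $N\ge N_0(\eps)$, which is all that the tightness and limit arguments use.

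Alternatively, prove only $\eps$-dependent, $N$-uniform $H^2$ bounds for $u^N_\eps$. It\^o's formula for $|\partial_x^2u^N_\eps|^2_{L^2}$ involves no projection, since $\partial_x^4u^N_\eps\in H_N$, and the transport term is absorbed by $\eps|\partial_x^4u^N_\eps|^2_{L^2}$. Then pass $N\to\infty$ and run the $\mathcal{I}_2$ argument on the regularized equation, where the cancellation is exact.
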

\begin{proof}
Consider the functional  $\mathcal{I}_2: H^2(\T) \to \R$ defined by
\begin{equation}\label{I2}
    \mathcal{I}_2(v) := \int_\T \bigg( (\partial_x^2 v(x))^2 - \frac{5}{3} v(x)(\partial_x v(x))^2 + \frac{5}{36}(v(x))^4 \bigg) dx. 
\end{equation}
First we note the two Fr\'echet derivatives of this functional $\mathcal{I}': H^2(\T) \to H^{-2}(\T)$ and $\mathcal{I}'': H^2(\T) \to L(H^2(\T), H^{-2}(\T))$:
\begin{equation}\label{derivativeI2}
\begin{aligned}
\langle \mathcal{I}_2'(v), w \rangle & = \int_\T \bigg( 2\partial_x^2v(x)\partial_x^2w(x) - \frac{5}{3}(\partial_x v(x))^2 w(x)- \frac{5}{3} \partial_x (v(x)^2)\partial_x w(x) + \frac{5}{9}(v(x))^3w(x) \bigg) dx, \\
  \langle \mathcal{I}_2''(v)h, w \rangle &=  \int_\T \biggl( \partial_x^2 h(x) \partial_x^2 w(x) + \frac{5}{3} v(x)^2 h(x) w(x) + \frac{10}{3} \partial_x v(x) \partial_x h(x) w(x) + \frac{10}{3}  v(x)\partial_x^2h(x) w(x)\\
  & \hspace{2cm} + \frac{10}{3} h(x)\partial_x^2 v(x) w(x) \biggr) dx.
\end{aligned}
\end{equation}
Applying It\^{o}'s formula to \eqref{eqn: galerkin relabel} with the functional $\mathcal{I}_2(u_\eps^N)$, we obtain
\begin{equation}
    \begin{aligned}\label{ito3}
    	& \mathcal{I}_2(u_\eps^N(t)) + \int_0^t {( \mathcal{I}_2'(u_\eps^N(s)), \theta_N(|u_\eps^N(s)|_{H^1}) (\partial_x^3 u_\eps^N(s) + u_\eps^N(s) \partial_x u_\eps^N(s)) )_{L^2}} ds \\
    	& + \int_0^t (\mathcal{I}_2'(u_\eps^N(s)), \gamma u_\eps^N(s) + \eps \partial_x^4 u_\eps^N(s))_{L^2} ds\\
        & = |\partial_x^2 u_\eps^N(t)|_{L^2}^2 - \frac{5}{3} \int_\T u_\eps^N(t, x) (\partial_x u_\eps^N(t,x))^2 dx  +  \frac{5}{36}  |u_\eps^N(t)|_{L^4}^4 \\
        & + 2 \gamma \int_0^t |\partial_x^2 u_\eps^N(s)|_{L^2}^2 ds - 5\gamma \int_0^t \int_\T u_\eps^N(s,x) (\partial_x u_\eps^N(s,x))^2 dx ds  + \frac{5}{9}\gamma \int_0^t |u_\eps^N(s)|_{L^4}^4 ds \\
        & + 2\eps \int_0^t |\partial_x^4 u_\eps^N(s)|_{L^2}^2 ds + \eps \bigg[ \frac{5}{3} \int_0^t  \int_\T (\partial_x u_\eps^N(s,x))^2 \partial_x^4 u_\eps^N(s,x) dx ds + \frac{5}{9} \int_0^t ((u_\eps^N(s))^3, \partial_x^4 u_\eps^N(s))_{L^2} ds \bigg] \\
& = \mathcal{I}_2(u_\eps^N(0)) + \int_0^t (\mathcal{I}_2'(u_\eps^N(s)), P_NG(u_\eps^N(s))dW(s))_{L^2} + \int_0^t\int_{E_0} (L'(u_\eps^N(s)), P_N K(u_\eps^N(s), \xi))_{L^2} d\widehat{\pi}(\xi, s) \\
        & + \frac{1}{2} \int_0^t \text{Tr}(\mathcal{I}_2''(u_\eps^N(s)) P_N G(u_\eps^N(s)) P_N G(u_\eps^N(s))^*) ds \\
        & + \int_0^t\int_{E_0} \left( \mathcal{I}_2(u_\eps^N(s) + P_N K(u_\eps^N(s), \xi)) - \mathcal{I}_2(u_\eps^N(s)) - \left( \mathcal{I}_2'(u_\eps^N(s)), P_N K((u_\eps^N(s), \xi))\right) _{L^2}\right)  d\pi (\xi, s) \\
        & =: \mathcal{I}_2(u_\eps^N(0)) + R_1(t) + R_2 (t)+ R_3(t) + R_4(t).
    \end{aligned}
\end{equation}
Here we note that we additionally used the following identity in simplifying the left hand side, which follows from integration by parts:
$$\int_0^t {\biggl( \mathcal{I}_2'(u_\eps^N(s)), \theta_N(|u_\eps^N(s)|_{H^1}) (\partial_x^3 u_\eps^N(s) + u_\eps^N(s) \partial_x u_\eps^N(s)) \biggr)_{L^2}} ds =0.$$
We collect the rest of terms on the left side as denote them as,
\begin{equation}
	\begin{aligned}\label{Lterms}
		L_1 (t)+ L_2(t) + L_3(t) + L_4 (t)& := - \frac{5}{3} \int_\T u_\eps^N(t,x) (\partial_x u_\eps^N(t, x))^2 dx - 5\gamma \int_0^t \int_\T u_\eps^N(s,x) (\partial_x u_\eps^N(s,x))^2 dx ds \\ & +  \frac{5\eps}{3}\int_0^t \int_\T (\partial_x u_\eps^N(s,x))^2 \partial_x^4 u_\eps^N(s,x) dxds + \frac{5\eps}{9} \int_0^t ((u_\eps^N(s))^3, \partial_x^4 u_\eps^N(s))_{L^2}ds.
	\end{aligned}
\end{equation}
Our aim for the rest of this proof will be to obtain appropriate estimates for the term $R_j, L_j$, $j=1,2,3,4$. 
We begin with the right side terms. Using the formula for $\mathcal{I}'_2$ given in \eqref{derivativeI2} we rewrite $R_1$ as follows,
\begin{equation*}
    \begin{aligned}
        R_1(t) & = 2 \int_0^t (\partial_x^2 u_\eps^N(s), \partial_x^2 P_N G(u_\eps^N(s)) dW(s)) - \frac{5}{3}\int_0^t ((\partial_x u_\eps^N(s))^2, P_N G(u_\eps^N(s)) dW(s)) \\
        &  -\frac{5}{3}\int_0^t (\partial_x(u_\eps^N(s))^2, \partial_x P_N G(u_\eps^N(s))dW(s))_{L^2} + \frac{5}{9} \int_0^t ((u_\eps^N(s))^3, \partial_x P_N G(u_\eps^N(s))dW(s))_{L^2} \\
        & =: R_{1,1} (t)+ R_{1,2} (t)+ R_{1,3} (t)+ R_{1,4}(t).
    \end{aligned}
\end{equation*}
Next we apply $\E \sup_{t \in [0,t']}$ on both sides of the equation for $R_1$ given above. Then, an application of the BDG inequality, the growth assumption \eqref{assumption: growth of G} along with the bound obtained in Theorem \ref{ape:thm1} for $p = 2$ give us
\begin{equation*}
    \begin{aligned}
        \E\sup_{t \in [0,t']}|R_{1,1}(t)| 
        & \leq C \E \lp \int_0^{t'}|\partial_x^2 u_\eps^N(s)|^2_{L^2} \|\partial_x^2 P_N G(u_\eps^N(s))\|_{HS(U_0, L^2)}^2 ds \rp^{1/2} \\
        & \leq C \E \bigg[ \sup_{t \in [0,t']}|\partial_x^2 u_\eps^N(t)|_{L^2} \lp \int_0^{t'} \|\partial_x^2 P_N G(u_\eps^N(s))\|_{HS(U_0, L^2)}^2 ds \rp^{1/2} \bigg]\\
        & \leq \frac{1}{100} \E\sup_{t \in [0,t']}|\partial_x^2 u_\eps^N(t)|_{L^2}^2 + C \E \int_0^{t'}\| G(u_\eps^N(s))\|_{HS(U_0, H^2)}^2 ds \\
        & \leq \frac{1}{100} \E\sup_{t \in [0,t']}|\partial_x^2 u_\eps^N(t)|_{L^2}^2 + C \E \int_0^{t'}\left(1 + |u_\eps^N(s)|_{L^2}^2 + |\partial_x^2 u_\eps^N(s)|_{L^2}^2\right) ds \\
        & \leq \frac{1}{100} \E\sup_{t \in [0,t']}|\partial_x^2 u_\eps^N(t)|_{L^2}^2 + C \biggl( 1 + \E|u_0|^2_{L^2} + \E \int_0^{t'} |\partial_x^2 u_\eps^N(s)|_{L^2}^2ds \biggr).
    \end{aligned}
\end{equation*}
Similarly, we also find for $R_{1,2}$ that,
\begin{equation}
    \begin{aligned}\label{r12}
        \E\sup_{t \in [0,t']}|R_{1,2}(t)| & = C \E \sup_{t \in [0,t']} \bigg| \int_0^{t'} ( (\partial_x u_\eps^N(s))^2, P_N G(u_\eps^N(s)) dW(s) )_{L^2} \bigg| \\
        & \leq \E \lp \int_0^{t'}|\partial_x u_\eps^N(s)|^4_{L^4} \|G(u_\eps^N(s))\|_{HS(U_0, L^2)}^2 ds \rp^{1/2} \\
        & \leq \E \lp \int_0^{t'}|\partial_x u_\eps^N(s)|^4_{L^4} \left( 1 + |u_\eps^N(s)|_{L^2}^2  \right) ds  \rp^{1/2}.
    \end{aligned}
\end{equation}
To bound the right hand side of \eqref{r12} we note that the Sobolev embedding $H^{1/4}(\T) \hookrightarrow L^4(\T)$  in 1D and the Sobolev interpolation inequality $H^{5/4}(\T) = [H^2(\T), L^2(\T)]_{5/8}$ give us that,
\[|\partial_x u_\eps^N(s)|_{L^4} \leq C|u_\eps^N(s)|_{H^{5/4}} \leq C |u_\eps^N(s)|^{3/8}_{L^2}\left(|u_\eps^N(s)|_{L^2}^{5/8} + |\partial^2_x u_\eps^N(s)|_{L^2}^{5/8} \right).\]
Hence by Theorem \ref{ape:thm1}, for $p = 4$, for some constant $C$ independent of $N$ and $\eps$ we have,
\begin{equation*}
    \begin{aligned}
        \E \lp \int_0^{t'}|\partial_x u_\eps^N(s)|^4_{L^4} ds  \rp^{1/2} & \leq C \E \lp \int_0^{t'} \left( |u_\eps^N(s)|^4_{L^2}  + |u_\eps^N(s)|_{L^2}^{3/2}|\partial_x^2(s) u_\eps^N(s)|_{L^2}^{5/2} \right) ds \rp^{1/2}  \\
        & \leq C\E\sup_{t\in [0,t']}|u_\eps^N(t)|_{L^2}^2  + C\E \biggl[  \sup_{t\in[0,t']}  |u_\eps^N(t)|_{L^2}^{3/4}  |\partial_x^2 u_\eps^N(t)|_{L^2}^{5/4}  \biggr] \\
        & \leq C + \frac{1}{100} \E \sup_{t \in [0,t']} |\partial_x^2 u_\eps^N(t)|_{L^2}^2 + C \E\sup_{t \in [0,t']} |u_\eps^N(t)|_{L^2}^4 \\
        & \leq C (1+\E|u_0|_{L^2}^4) + \frac{1}{100} \E \sup_{t \in [0,t']} |\partial_x^2 u_\eps^N(t)|_{L^2}^2 .
    \end{aligned}
\end{equation*}
Furthermore, note that by Theorem \ref{ape:thm1} for $p = 6$,
\begin{equation*}
    \begin{aligned}
        \E \lp \int_0^{t'}|u_\eps^N(s)|_{L^2}^2 |\partial_x u_\eps^N(s)|^4_{L^4} ds  \rp^{1/2} & \leq C \E \lp \int_0^{t'}|u_\eps^N(s)|^6_{L^2} + |u_\eps^N(s)|_{L^2}^{7/2}|\partial_x^2 u_\eps^N(s)|_{L^2}^{5/2}  \rp^{1/2}  \\
        & \leq C\E\sup_{t\in [0,t']}|u_\eps^N(t)|_{L^2}^3  + C\E \biggl[  \sup_{t\in[0,t']} |u_\eps^N(t)|_{L^2}^{7/4} \sup_{t \in [0,t']} |\partial_x^2 u_\eps^N(t)|_{L^2}^{5/4} \biggr] \\
        & \leq C + \frac{1}{100} \E \sup_{t \in [0,t']} |\partial_x^2 u_\eps^N(t)|_{L^2}^2 + C \E\sup_{t \in [0,T]} |u_\eps^N(t)|_{L^2}^6 \\
        & \leq C (1+\E|u_0|_{L^2}^6) + \frac{1}{100} \E \sup_{t \in [0,t']} |\partial_x^2 u_\eps^N(t)|_{L^2}^2.
    \end{aligned}
\end{equation*}
Combining these two estimates with \eqref{r12}, we derive the following bounds for $R_{1,2}$:
\begin{equation*}
    \E \sup_{t \in [0,t']} |R_{1,2}(t)| \leq C(1+\E|u_0|_{L^2}^6) + \frac{2}{100} \E\sup_{t \in [0,t']}|\partial_x^2 u_\eps^N(t)|_{L^2}^2.
\end{equation*}
Next, we will find bounds for the term $R_{1,3}$. For that purpose, we apply the BDG inequality, use the continuous embedding $H^{1/4}(\T) \hookrightarrow L^4(\T)$ together with the interpolation inequality $H^{1/4} (\T)= [H^2(\T), L^2(\T)]_{1/8}$ and {bounds from Theorem \ref{ape:thm1} with $p = 6$ }to conclude that,
\begin{equation*}
    \begin{aligned}
        \E\sup_{t \in [0,t']} |R_{1,3}(t)| & \leq C \E \sup_{t \in [0,t']} \bigg| \int_0^t \left(\partial_x (u_\eps^N(s))^2, \partial_x P_N G(u_\eps^N(s))dW(s) \right) \bigg| \\
        & \leq C \E \lp \int_0^{t'} |u_\eps^N(s)|_{L^4}^4 \|G(u_\eps^N(s))\|_{HS(U_0, H^2)}^2 ds \rp^{1/2} \\
        & \leq C\E \lp \int_0^{t'} \lp |u_\eps^N(s)|_{L^2}^4 + |u_\eps^N(s)|_{L^2}^{7/2} |\partial_x^2 u_\eps^N(s)|_{L^2}^{1/2}\rp  \lp 1 + |u_\eps^N(s)|_{L^2}^2 + |\partial_x^2 u_\eps^N(s)|_{L^2}^2\rp ds \rp^{1/2} \\
        & \leq C\lp 1 +  \E \sup_{t\in [0,T]} |u_\eps^N(t)|_{L^2}^6 \rp + \frac{1}{100} \E \sup_{t \in [0,t']} |\partial_x^2 u_\eps^N(t)|_{L^2}^2 \\
        & \leq C (1+\E|u_0|_{L^2}^6)+ \frac{1}{100} \E \sup_{t \in [0,t']} |\partial_x^2 u_\eps^N(t)|_{L^2}^2.
    \end{aligned}
\end{equation*}
Finally, we will treat the last term $R_{1,4}$ in the expansion of $R_1$. Note that, BDG inequality, the Sobolev inequality $H^{1/3}(\T) = [H^2(\T), L^2(\T)]_{1/6} \hookrightarrow  L^6(\T)$, and the bounds obtained in Theorem \ref{ape:thm1} with $p = 8$ together give us
\begin{equation}
    \begin{aligned}
        \E\sup_{t \in [0,t']} |R_{1,4}(t)| & = C \E \sup_{t \in [0,t']} \bigg| \int_0^t \left( (u_\eps^N(s))^3,  P_N G(u_\eps^N(s))dW(s)\right) \bigg| \\
        & \leq C \E \lp \int_0^{t'} |u_\eps^N(s)|_{L^6}^6 (1+|u_\eps^N(s)|_{L^2}^2) ds \rp^{1/2} \\
        & \leq C\E \lp \int_0^{t'} \lp |u_\eps^N(s)|_{L^2}^6 + |u_\eps^N(s)|_{L^2}^{5} |\partial_x^2 u_\eps^N(s)|_{L^2}\rp  \lp 1 + |u_\eps^N(s)|_{L^2}^2 \rp ds \rp^{1/2} \\
        & \leq C \biggl( 1 + \E\sup_{t \in [0,T]}|u_\eps^N(t)|_{L^2}^8 \biggr) + \frac{1}{100} \E\sup_{t\in [0,t']}|\partial_x^2 u_\eps^N(t)|_{L^2}^2 \\
        & \leq C (1+\E|u_0|_{L^2}^8) + \frac{1}{100} \E\sup_{t\in [0,t']}|\partial_x^2 u_\eps^N(t)|_{L^2}^2.
    \end{aligned}
\end{equation}
Collecting the estimates for $R_{1,j}$, $j=1,2,3,4$ derived above, we obtain for $R_1$ that
\begin{equation*}
    \E \sup_{t \in [0,t']}|R_1(t)| \leq C(1+\E|u_0|_{L^2}^8) + \frac{5}{100} \E \sup_{t \in [0,t']} |\partial_x^2 u_\eps^N(t)| + C \E \int_0^{t'} |\partial_x^2 u_\eps^N(s)|_{L^2}^2 ds.
\end{equation*}
Estimates for the term $R_2$ are derived identically to those for the term $R_1$ and hence we skip these calculations here and state the bounds we obtain ultimately,
\begin{equation*}
    \E \sup_{t \in [0,t']}|R_2(t)| \leq C(1+\E|u_0|_{L^2}^8) + \frac{5}{100} \E \sup_{t \in [0,t']} |\partial_x^2 u_\eps^N(t)| + C \E \int_0^{t'} |\partial_x^2 u_\eps^N(s)|_{L^2}^2 ds.
\end{equation*}
Now we turn to the term $R_3$. We recall that $\{e_n\}_{n=0}^\infty$ is an orthonormal basis for $U_0$.  We further recall that the second Frechet derivative of $\mathcal{I}_2$ is given in \eqref{derivativeI2}.
Then the integrand of $R_3$ can be written as, 
\begin{equation} 
    \begin{aligned}\label{r3}
     \frac{1}{2}&\sum_n \langle  (P_NG(u_\eps^N(s)))^* \mathcal{I}_2''(u_\eps^N(s)) P_NG(u_\eps^N(s)) e_n, e_n \rangle_{U_0} \\
    & = \frac{1}{2}\sum_n \langle   \mathcal{I}_2''(u_\eps^N(s))P_NG(u_\eps^N(s)) e_n, P_NG(u_\eps^N(s)) e_n \rangle \\
    & =  \sum_n \bigg[ |\partial_x^2 P_N G (u_\eps^N(s))e_n|_{L^2}^2 + \frac{5}{6} ((u_\eps^N(s))^2 P_N G (u_\eps^N(s))e_n, P_N G (u_\eps^N(s))e_n)_{L^2} \\
    &  + \frac{5}{6} \int_\T u_\eps^N(s,x)  \partial_x (P_N G(u_\eps^N(s, x)) e_n)^2 dx + \frac{5}{3} ( u_\eps^N(s)\partial_x^2 P_N G(u_\eps^N(s)) e_n, P_N G(u_\eps^N(s))e_n)_{L^2} \\
    &  + \frac{5}{3} (P_N G (u_\eps^N(s))e_n \partial_x^2 u(s), P_N G(u_\eps^N(s)) e_n   )_{L^2} \bigg] \\
    & := R_{3,1}(s) + \cdots + R_{3,5}(s).
    \end{aligned}
\end{equation}
We will find bounds for each term $R_{3,j}$, $j=1,2,3,4,5$ individually.  For $R_{3,1}$, the growth assumption \eqref{assumption: growth of G} immediately gives us,
\begin{equation} \label{temporary:R31}
    \begin{aligned}
        |R_{3,1}(s)| \leq \sum_n |P_N G (u_\eps^N(s))e_n|^2_{H^2} \leq \|G(u_\eps^N(s))\|_{HS(U_0, H^2)}^2 \leq C (1 + |u_\eps^N(s)|^2_{L^2} + |\partial_x^2 u_\eps^N(s) |_{L^2}^2).
    \end{aligned}
\end{equation}
The next term $R_{3,2}$ is treated similarly by using \eqref{assumption: growth of G},
\begin{equation*}
    \begin{aligned}
        |R_{3,2}(s)| & = |\frac{5}{6} \sum_n  ((u_\eps^N(s))^2 P_N G(u_\eps^N(s)) e_n, P_N G(u_\eps^N(s)) e_n)| \leq \frac{5}{6} |u_\eps^N(s)|_{L^\infty}^2 \|G(u_\eps^N(s))\|^2_{HS(U_0, L^2)} \\
        & \leq C  \lp |u_\eps^N(s)|_{L^2}^2 + |u_\eps^N(s)|_{L^2} |\partial_x u_\eps^N(s)|_{L^2} \rp \lp 1 + |u_\eps^N(s)|_{L^2}^2 \rp \\
        & \leq C(|\partial_x^2 u_\eps^N (s)|_{L^2}^2 + |u_\eps^N(s)|_{L^2}^6 + 1).
    \end{aligned}
\end{equation*}
By using a similar argument along with the interpolation inequality $[H^2(\T), L^2(\T)]_{1/8} = H^{1/4}(\T) \hookrightarrow L^4(\T)$ and the H\"{o}lder inequality, we obtain for $R_{3,3}$ that
\begin{equation*}
    \begin{aligned}
        |R_{3,3}(s)|& = |\frac{5}{6} \sum_n \bigg( \partial_x^2 u_\eps^N(s), (P_N G(u_\eps^N(s))e_n)^2  \bigg)_{L^2} |\leq \frac{5}{6} |\partial_x^2 u_\eps^N(s)|_{L^2} \sum_n |P_N G(u_\eps^N(s))e_n|_{L^4}^2 \\
        & \leq \frac{5}{6} |\partial_x^2 u_\eps^N(s)|_{L^2} \sum_n |P_N G(u_\eps^N(s))e_n|_{L^4}^2 \\
        & \leq C |\partial_x^2 u_\eps^N(s)|_{L^2} \left( \sum_n |P_N G(u_\eps^N(s))e_n|_{H^2}^2 \right)^{1/8}  \left( \sum_n |P_N G(u_\eps^N(s))e_n|_{L^2}^2 \right)^{7/8} \\
        & \leq  C |\partial_x^2 u_\eps^N(s)|_{L^2} \left( 1 + |u_\eps^N(s)|_{L^2}^{1/4} + |\partial_x^2 u_\eps^N(s)|_{L^2}^{1/4}\right)  \left( 1 + |u_\eps^N(s)|_{L^2}^{7/4} \right) \\
        & \leq C(|\partial_x^2 u_\eps^N(s)|^2_{L^2} + |u_\eps^N(s)_{L^2}^6 + 1).
    \end{aligned}
\end{equation*}
We next turn to the term $R_{3,4}$. After integrating by parts twice, we write 
\begin{equation*}
	\begin{aligned}
		R_{3,4}(s) & = -\frac{10}{3} \sum_n \bigl( \partial_x P_N G(u_\eps^N(s)) e_n, (\partial_x u_\eps^N(s))(P_N G(u_\eps^N(s))e_n) \bigr)_{L^2} - \frac{10}{3} \sum_n \bigl( (\partial_x P_N G(u_\eps^N(s))e_n)^2, u_\eps^N(s) \bigr)_{L^2} \\
		& = \frac{5}{3} \sum_n \bigl( \partial_x^2 u_\eps^N(s), (P_N G(u_\eps^N(s))e_n)^2  \bigr)_{L^2} - \frac{10}{3} \sum_n \bigl( (\partial_x P_N G(u_\eps^N(s))e_n)^2, u_\eps^N(s) \bigr)_{L^2} .
	\end{aligned}
\end{equation*}
Both terms on the right hand side of the equation above can be estimated similar to $R_{3,3}$. For the second term, we additionally use the interpolation identity $[H^2(\T), L^2(\T)]_{5/8} = H^{5/4}(\T)$, and conclude that
\begin{equation*}
	\begin{aligned}
		|R_{3,4}(s)|
		& \leq C(|\partial_x^2 u_\eps^N(s)|^2_{L^2} + |u_\eps^N(s)|_{L^2}^6 + 1).
	\end{aligned}
\end{equation*}
The final term $R_{3,5}$ is treated identically:
\begin{equation*}
    \begin{aligned}
        |R_{3,5}(s)| \leq C(|\partial_x^2 u_\eps^N(s)|^2_{L^2} + |u_\eps^N(s)|_{L^2}^6 + 1).
    \end{aligned}
\end{equation*}
Combining the five estimates for the right side of \eqref{r3}, applying on $\E\sup_{t \in [0,t']}$ both sides and recalling the estimates in Theorem \ref{ape:thm1} with $p = 6$, we conclude that 
\begin{equation*}
    \begin{aligned}
        \E \sup_{t \in [0,t']} |R_3(t)| \leq C \lp 1+   \E|u_0|_{L^2}^6 +  \E \int_0^{t'} |\partial_x^2 u_\eps^N(s)|_{L^2}^2ds \rp .
          \end{aligned}
\end{equation*}
Next, we turn our attention to the final term $R_4$ appearing in the equation \eqref{ito3}. Using the definitions \eqref{I2} and \eqref{derivativeI2} we write 
\begin{equation}
    \begin{aligned} \label{R4}
        R_4&(t)  = \int_0^t \int_{E_0} \int_\T \bigg( (\partial_x^2 P_N K(u_\eps^N(s), \xi))^2 + \frac{5}{6} (u_\eps^N(s))^2 (P_N K(u_\eps^N(s), \xi))^2 \\
        &  + \frac{5}{9} u_\eps^N (P_N K(u_\eps^N(s), \xi))^3  + \frac{5}{9}(P_N K(u_\eps^N(s), \xi))^4  - \frac{5}{3} \big( u_\eps^N(s)(\partial_x P_N K(u_\eps^N(s), \xi))^2  \\
        &  + P_N K(u_\eps^N(s), \xi) \big[ (\partial_x P_N K(u_\eps^N(s), \xi))^2 + 2(\partial_x u_\eps^N(s))(\partial_x P_N K(u_\eps^N(s), \xi))\big] \big) \bigg) dx d\nu(\xi)ds\\
        & =: (R_{4,1} + \cdots + R_{4,7})(t).
    \end{aligned}
\end{equation}
Next, we will find bounds for the terms $R_{4,j}$, $j=1,2,..,7$ individually. First, observe that the growth assumption \eqref{assumption: growth of K} for the noise coefficient $K$ gives us that
\begin{equation*}
    \begin{aligned}
        |R_{4,1}(t)| & \leq \int_0^t \int_{E_0} |P_N \partial_x^2 K(u_\eps^N(s), \xi)|_{L^2}^2 d\nu(\xi)ds  \leq \int_0^t \int_{E_0} |K(u_\eps^N(s), \xi)|_{H^2}^2 d\nu(\xi)ds \\
        &  \leq C  \int_0^t ( 1 + |u_\eps^N(s)|_{L^2}^2 +  |\partial_x^2 u_\eps^N(s)|_{L^2}^2 ) ds.
    \end{aligned}
\end{equation*}
Next, observe that the growth assumption \eqref{assumption: growth of K} along with the interpolation $  H^{1} (\T)= [H^2(\T), L^2(\T)]_{1/2} \hookrightarrow L^\infty(\T)$ gives us that
\begin{equation*}
    \begin{aligned}
        |R_{4,2} (t)|& \leq \frac{5}{6}\int_0^t |u_\eps^N(s)|_{L^\infty}^2 \biggl(\int_{E_0} |P_N K(u_\eps^N(s), \xi)|_{L^2}^2 d\nu(\xi) \biggr) ds \\
         & \leq C \int_0^t (|u_\eps^N(s)|_{L^2}^2 + |u_\eps^N(s)|_{L^2} |\partial_x^2 u_\eps^N(s)|_{L^2})(1+|u_\eps^N(s)|_{L^2}^2)ds\\
        & = C \int_0^t \biggl[ |u_\eps^N(s)|_{L^2}^2 + |u_\eps^N(s)|_{L^2}^4 + |u_\eps^N(s)|_{L^2}|\partial_x^2 u_\eps^N(s)|_{L^2} + |u_\eps^N(s)|_{L^2}^{3}|\partial_x^2 u_\eps^N(s)|_{L^2} \biggr] ds\\
        & \leq C \left( 1 + \int_0^t \bigl[ |u_\eps^N(s)|_{L^2}^6 +  |\partial_x^2 u_\eps^N(s)|_{L^2}^2  \bigr] ds\right)  .
    \end{aligned}
\end{equation*}
Next, we use the interpolation identity $H^{1/6} (\T)= [H^2(\T), L^2(\T)]_{1/12} \hookrightarrow L^3(\T)$, the Sobolev embedding $H^2(\T) \hookrightarrow L^\infty(\T)$ and the assumption \eqref{assumption: growth of K} to obtain, for $R_{4,3}$, the following estimate: 
\begin{multline*}
    \begin{aligned}
      |  R_{4,3}(t)| & \leq \frac{5}{9} \int_0^t |u_\eps^N(s)|_{L^\infty} \biggl(  \int_{E_0} |P_N K(u_\eps^N(s), \xi)|_{L^3}^3 d \nu(\xi) \biggr) ds \\
        & \leq C  \int_0^t |u_\eps^N(s)|_{L^\infty} \biggl( \int_{E_0} |K(u_\eps^N(s), \xi)|_{L^2}^{11/4} \bigl(|K(u_\eps^N(s), \xi)|_{L^2}^{1/4} + |\partial_x^2 K(u_\eps^N(s), \xi)|_{L^2}^{1/4} \bigr) d\nu(\xi) \biggr) ds \\
        & \leq  C \int_0^t |u_\eps^N(s)|_{L^\infty} \biggl(\int_{E_0} \bigl( |K(u_\eps^N(s), \xi)|_{L^2}^{3} + |K(u_\eps^N(s), \xi)|_{L^2}^{11/4}|\partial_x^2 K(u_\eps^N(s), \xi)|_{L^2}^{1/4} \bigr) d\nu(\xi)\bigg) ds.
  \end{aligned}
\end{multline*}
Next, we apply the H\"{o}lder inequality to the second term of the inequality above, to obtain
\begin{multline*}
	\begin{aligned}
  |  &R_{4,3}(t)|          \leq  C \int_0^t |u_\eps^N(s)|_{L^\infty} \lp \int_{E_0}|K(u_\eps^N(s), \xi)|_{L^2}^{22/7} d\nu(\xi) \rp^{7/8}\lp \int_{E_0}|\partial_x^2 K(u_\eps^N(s), \xi)|_{L^2}^2 d\nu(\xi)\rp^{1/8} ds  \\
        & \quad + C \int_0^t |u_\eps^N(s)|_{L^\infty} (1 + |u_\eps^N(s)|_{L^2}^3) ds \\
        & \leq C \int_0^t (|u_\eps^N(s)|_{L^2} + |\partial_x^2 u_\eps^N(s)|_{L^2}) \lp 1 + |u_\eps^N(s)|_{L^2}^3 + |\partial_x^2 u_\eps^N(s) |_{L^2}^{1/4} + |u_\eps^N(s)|_{L^2}^{11/4}|\partial_x^2 u_\eps^N(s) |_{L^2}^{1/4} \rp ds \\
        & \leq C \int_0^t \bigl( 1 + |u_\eps^N(s)|_{L^2}^6 + |\partial_x^2 u_\eps^N(s)|_{L^2}^2 \bigr) ds.
    \end{aligned}
\end{multline*}
A similar argument along with the continuous embedding $H^{1/4}(\T) = [H^2(\T), L^2(\T)]_{1/8} \hookrightarrow L^4(\T)$, the growth assumption \eqref{assumption: growth of K} and \eqref{assumption:interpolate K}, gives us
\begin{equation*}
    \begin{aligned}
        |R_{4,4} (t)| & \leq C \int_0^t \int_{E_0} |P_N K(u_\eps^N(s), \xi)|_{L^4}^4 d\nu(\xi) ds \\
        & \leq  C\int_0^t \int_{E_0} \biggl( |K(u_\eps^N(s), \xi)|_{L^2}^4 + |K(u_\eps^N(s), \xi)|_{L^2}^{7/2}|\partial_x^2 K(u_\eps^N(s), \xi)|_{L^2}^{1/2} \biggr) d\nu(\xi) ds\\
        & \leq C\int_0^t \biggl[ (1+|u_\eps^N(s)|_{L^2}^4) + \lp \int_{E_0} |K(u_\eps^N(s), \xi)|_{L^2}^{14/3} d\nu(\xi) \rp^{3/4}\lp \int_{E_0}|\partial_x^2 K(u_\eps^N(s), \xi)|_{L^2}^{2} d\nu(\xi) \rp^{1/4} \biggr] ds \\
        & \le C \int_0^t \bigl((1+|u_\eps^N(s)|_{L^2}^4) + (1+|u_\eps^N(s)|_{L^2}^{7/2}) (1+|u_\eps(s)^N|_{L^2}^{1/2} + |\partial_x^2 u_\eps^N(s)|_{L^2}^{1/2} \bigr)ds \\
        & \le C \int_0^t \bigl( 1 + |u_\eps^N(s)|_{L^2}^6 + |\partial_x^2 u_\eps^N(s)|_{L^2}^2 \bigr) ds.
    \end{aligned}
\end{equation*}
The bounds for $R_{4,5}$ are straightforward:
\begin{equation*}
    \begin{aligned}
        |R_{4,5}(t)|  &\leq C \int_0^t \int_{E_0} (u_\eps^N(s), (\partial_x P_N K(u_\eps^N(s), \xi))^2)_{L^2} d\nu(\xi) ds \\
         &\leq C \int_0^t |u_\eps^N(s)|_{L^2} \biggl( \int_{E_0} |\partial_x P_N  K(u_\eps^N(s), \xi)|_{L^4}^2 d\nu(\xi) \biggr) ds .
    \end{aligned}
\end{equation*}
Next, by using the Sobolev interpolation $H^{5/4} (\T)= [H^2(\T), L^2(\T)]_{5/8}$, the continuous embedding $H^{1/4}(\T) \hookrightarrow L^4(\T)$, the growth assumption \eqref{assumption: growth of K} and \eqref{assumption:interpolate K} we obtain that
\begin{equation*}
    \begin{aligned}
    \int_{E_0}&|\partial_x P_N K(u_\eps^N(s), \xi)|^2_{L^4} d\nu(\xi)\leq C  \int_{E_0} \bigl( |K(u_\eps^N(s), \xi)|_{L^2}^{3/4}|\partial_x^2 K(u_\eps^N(s), \xi)|_{L^2}^{5/4}  +  |K(u_\eps^N(s), \xi)|_{L^2}^2 \bigr) d\nu(\xi)  \\
    & \le C \lp \int_{E_0}|K(u_\eps^N(s), \xi)|_{L^2}^2 d\nu(\xi) \rp^{3/8} \lp \int_{E_0}|\partial_x^2 K(u_\eps^N(s), \xi)|_{L^2}^2 d\nu(\xi)\rp^{5/8} \\
     & \quad + C\int_{E_0}  |K(u_\eps^N(s), \xi)|_{L^2}^2 d\nu(\xi)  \\
    & \le C\biggl( \lp 1 + |u_\eps^N(s)|_{L^2}^2 \rp^{3/8} \lp 1 + |u_\eps^N(s)|_{L^2}^2 + |\partial_x^2 u_\eps^N(s)|_{L^2}^2 \rp^{5/8} + \lp 1 + |u_\eps^N(s)|_{L^2}^2 \rp\biggr)  \\
    & \le C \bigl(1 + |u_\eps^N(s)|_{L^2}^2 + |u_\eps^N(s)|_{L^2}^4 + |u_\eps^N(s)|_{L^2}^{3/4} |\partial_x^2 u_\eps^N(s)|_{L^2}^{5/4} \bigr).
    \end{aligned}
\end{equation*}
Thus it follows that
\begin{equation*}
    \begin{aligned}
        |R_{4,5}(t)| & \leq C\int_0^t \bigl( |u_\eps^N(s)|_{L^2}^3 + |u_\eps^N(s)|_{L^2}^5 + |u_\eps^N(s)|_{L^2}^{7/4} |\partial_x^2 u_\eps^N(s)|_{L^2}^{5/4} \bigr)ds  \\
        &\leq C \int_0^t \bigl( 1 + |u_\eps^N(s)|_{L^2}^6 + |\partial_x^2 u_\eps^N(s)|_{L^2}^{2} \bigr) ds .
    \end{aligned}
\end{equation*}
Next, integrating by parts, we rewrite $R_{4,6}$ as follows:
\begin{equation*}
    \begin{aligned}
        R_{4,6}(t) & = \int_0^t \int_{E_0} \bigl(P_N K(u_\eps^N(s), \xi), (\partial_x P_N K(u_\eps^N(s), \xi))^2 \bigr)_{L^2} d\nu(\xi) ds \\
        & = \frac{1}{2} \int_0^t \int_{E_0} \bigl(\partial_x (P_N K(u_\eps^N(s), \xi))^2, \partial_x(P_N K(u_\eps^N(s), \xi)) \bigr)_{L^2} d\nu(\xi) ds \\
        & = -\frac{1}{2}\int_0^t \int_{E_0} \bigl((P_N K(u_\eps^N(s), \xi))^2, \partial_x^2(P_N K(u_\eps^N(s), \xi)) \bigr)_{L^2} d\nu(\xi) ds.
    \end{aligned}
\end{equation*}
We estimate the right hand side above by using the Sobolev interpolation inequality and the continuous embedding $H^{1/4}(\T) = [H^2(\T), L^2(\T)]_{1/8} \hookrightarrow L^4(\T)$, as follows,
\begin{equation*}
	\begin{aligned}
        R_{4,6}(t) &  \leq C \int_{0}^t \int_{E_0} |P_N K(u_\eps^N(s), \xi)|_{L^4}^2 |\partial_x^2 P_N K(u_\eps^N(s), \xi)|_{L^2} d\nu(\xi) ds \\
        & \le C \int_{0}^t \int_{E_0} |K(u_\eps^N(s), \xi)|_{L^2}^{7/4}(|K(u_\eps^N(s), \xi)|_{L^2}^{1/4} + |\partial_x^2 K(u_\eps^N(s), \xi)|_{L^2}^{1/4}) |\partial_x^2 K(u_\eps^N(s), \xi)|_{L^2}d\nu(\xi) ds \\
        & \leq C \int_0^t \int_{E_0} \bigl( |K(u_\eps^N(s), \xi)|^2_{L^2} |\partial_x^2  K(u_\eps^N(s), \xi)|_{L^2} + | K(u_\eps^N(s), \xi)|^{7/4}_{L^2} |\partial_x^2 K(u_\eps^N(s), \xi)|^{5/4}_{L^2} \bigr) d\nu(\xi) ds.
    \end{aligned}
\end{equation*}
We will next find bounds for the two terms on the right hand side of the inequality above. For the first term, by using the growth assumption \eqref{assumption: growth of K} and \eqref{assumption:interpolate K} we arrive at, 
\begin{equation*}
    \begin{aligned}
    & \int_0^t \int_{E_0} | K(u_\eps^N(s), \xi)|^2_{L^2} |\partial_x^2 K(u_\eps^N(s), \xi)|_{L^2} d\nu(\xi) ds \\
    &  \leq \int_0^t \int_{E_0} \bigl( |K(u_\eps^N(s), \xi)|^4_{L^2} + |\partial_x^2  K(u_\eps^N(s), \xi)|_{L^2}^2  \bigr) d\nu(\xi) ds  \\
    & \le C \int_0^t \bigl( 1+|u_\eps^N(s)|^4_{L^2} + |u_\eps^N(s)|_{L^2}^2 + |\partial_x^2 u_\eps^N(s)|_{L^2}^2 \bigr) ds \le C \int_0^t (1 + |u_\eps^N(s)|_{L^2}^6 + |\partial_x^2 u_\eps^N(s)|_{L^2}^2) ds.
    \end{aligned}
\end{equation*}
For the remaining term, we find that
\begin{equation*}
    \begin{aligned}
    \int_0^t \int_{E_0}& |P_N K(u_\eps^N(s), \xi)|^{7/4}_{L^2} |\partial_x^2 P_N K(u_\eps^N(s), \xi)|^{5/4}_{L^2} d\nu(\xi) ds \\
    & \leq \int_0^t \lp \int_{E_0} |K(u_\eps^N(s), \xi)|^{14/3}_{L^2} d\nu(\xi)\rp^{3/8}\lp \int_{E_0} |\partial_x^2 K(u_\eps^N(s), \xi)|_{L^2}^2 d\nu(\xi)\rp^{5/8} ds \\
    & \le C \int_0^t (1 + |u_\eps^N(s)|_{L^2}^{14/3})^{3/8} (1+|u_\eps^N(s)|_{L^2}^2 + |\partial_x^2 u_\eps^N(s)|_{L^2}^2)^{5/8} ds \\
    & \le C \int_0^t (1 + |u_\eps^N(s)|_{L^2}^6 + |\partial_x^2 u_\eps^N(s)|_{L^2}^2) ds.
    \end{aligned}
\end{equation*}
The two estimates above imply for $R_{4,6}$ that
\begin{equation*}
    \begin{aligned}
        |R_{4,6}(t)| \le C \int_0^t (1 + |u_\eps^N(s)|_{L^2}^6 + |\partial_x^2 u_\eps^N(s)|_{L^2}^2) ds.
    \end{aligned}
\end{equation*}
The final term $R_{4,7}$ in \eqref{R4} is treated identically as the term $R_{4,6}$ and we obtain,
\begin{equation*}
    \begin{aligned}
      |  R_{4,7}(t)| \le C \int_0^t (1 + |u_\eps^N(s)|_{L^2}^6 + |\partial_x^2 u_\eps^N(s)|_{L^2}^2) ds.
    \end{aligned}
\end{equation*}
Now, we collect all the estimates for the terms $R_{4,j}; j=1,..,7$ appearing in \eqref{R4} and take $\E\sup_{t \in [0,t']}$ both sides. We summarize our findings for the final right hand side term $R_4$ of \eqref{ito3}:
\begin{equation*}
    \E \sup_{t\in [0,t']} |R_4(t)| \le C \E \int_0^{t'} (1 + |u_\eps^N(s)|_{L^2}^6 + |\partial_x^2 u_\eps^N(s)|_{L^2}^2) ds.
\end{equation*}
Furthermore, thanks to the bounds obtained in Theorem \ref{ape:thm1} by setting $p = 6$, we conclude that
\begin{equation*}
	\E \sup_{t\in [0,t']} |R_4(t)| \le C \biggl(  1 + \E|u_0|_{L^2}^6 +  \E \int_0^{t'} |\partial_x^2 u_\eps^N(s)|_{L^2}^2 ds\biggr).
\end{equation*}
Having found appropriate bounds for the right hand side terms of \eqref{ito3}, we now turn to the left hand side terms of \eqref{ito3} defined in \eqref{Lterms}. We start with $L_1$. Integrating by parts and by using the Sobolev embedding $ H^{1/4}(\T) = [H^2(\T), L^2(\T)]_{1/8} \hookrightarrow L^4(\T)$ we obtain that,
\begin{equation*}
    \begin{aligned}
        L_1 (t) & = -\frac{5}{3} \int_\T u_\eps^N(t,x) (\partial_x u_\eps^N(t,x))^2 dx =  \frac{5}{6}\int_\T (u_\eps^N(t,x))^2 \partial_x^2 u_\eps^N (t,x) dx\\
        & \leq \frac{5}{6} |u_\eps^N(t)|_{L^4}^2 |\partial_x^2 u_\eps^N(t)|_{L^2} \\
        & \leq C (|u_\eps^N(t)|_{L^2}^2 + |u_\eps^N(t)|_{L^2}^{7/4}|\partial_x^2 u_\eps^N(t)|_{L^2}^{1/4})|\partial_x^2 u_\eps^N(t)|_{L^2}\\
        & \leq \frac{1}{8} |\partial_x^2 u_\eps^N(t)|_{L^2}^2 + C \bigl(|u_\eps^N(t)|_{L^2}^8 + 1\bigr) .
    \end{aligned}
\end{equation*}
The term $L_2$ is handled similarly,
\begin{equation*}
    \begin{aligned}
        |L_2 (t)| & \leq \frac{1}{8} \int_0^t |\partial_x^2 u_\eps^N(s)|_{L^2}^2 ds + C \int_0^t (1+|u_\eps^N(s)|_{L^2}^8 ) ds.
    \end{aligned}
\end{equation*}
For $L_3$, we use the embedding $H^{1/4}(\T) \hookrightarrow L^4(\T)$ and the Sobolev interpolation inequality $ H^{5/4} (\T)= [H^4(\T), L^2(\T)]_{5/16}$ to obtain,
\begin{equation*}
    \begin{aligned}
        |L_3(t)| & = C \eps \int_0^t \int_\T  (\partial_x u_\eps^N(s,x))^2(\partial_x^4 u_\eps^N(s,x)) dx ds  \leq C \eps \int_0^t |\partial_x u_\eps^N(s)|_{L^4}^2 |\partial_x^4 u_\eps^N(s)|_{L^2} ds \\
        & \le C \eps\int_0^t \biggl( |u_\eps^N(s)|_{L^2}^2|\partial_x^4 u_\eps^N(s)| + |u_\eps^N(s)|_{L^2}^{11/8}|\partial_x^4 u_\eps^N(s)|_{L^2}^{13/8}  \biggr) ds \\
        & \leq \frac{\eps}{50}\int_0^t |\partial_x^4 u_\eps^N(s)|_{L^2}^2 ds + C \eps \int_0^t (1 + |u_\eps^N(s)|_{L^2}^8) ds.
    \end{aligned}
\end{equation*}
Similarly, the embedding $ H^{1/3}(\T) = [H^4(\T), L^2(\T)]_{1/12} \hookrightarrow L^6(\T)$ gives us for the final term $L_4$ that,
\begin{equation*}
    \begin{aligned}
        |L_4(t)| & = C \eps \int_0^t \int_\T  (u_\eps^N(s))^3(\partial_x^4 u_\eps^N(s)) ds 
         \leq C \eps \int_0^t |u_\eps^N(s)|_{L^6}^3 |\partial_x^4 u_\eps^N(s)|_{L^2} ds  \\
        & \le C \int_0^t \biggl( |u_\eps^N(s)|_{L^2}^3|\partial_x^4 u_\eps^N(s)| + |u_\eps^N(s)|_{L^2}^{11/4}|\partial_x^4 u_\eps^N(s)|_{L^2}^{5/4}  \biggr) ds\\
        & \leq \frac{\eps}{50}\int_0^t |\partial_x^4 u_\eps^N(s)|_{L^2}^2 ds + C \eps \int_0^t (1 + |u_\eps^N(s)|_{L^2}^8) ds.
    \end{aligned}
\end{equation*}
Therefore, combining above estimates for $L_j$, $j = 1,2,3,4$, taking $\E\sup_{t \in [0,t']}$ and applying Theorem \ref{ape:thm1} for $p = 8$, we obtain for the left hand side terms of \eqref{ito3} collectively that,
\begin{equation*}
	\E\biggl[ \sup_{t \in[0,t']}|(L_1+L_2+L_3+L_4)(t)| \biggr] \leq \frac{1}{4}\E\sup_{t \in[0,t']}|\partial_x^2 u_\eps^N(t)|_{L^2}^2 + \frac{\eps}{25} \int_0^t |\partial_x^4u_\eps^N(s)|_{L^2}^2ds + C(1+\E|u_0|_{L^2}^8). 
\end{equation*}
Hence, by collecting all the estimates for the terms $R_i, L_i; i=1,..,4$ appearing in \eqref{ito3}, we conclude that
\begin{equation*}
	\E\sup_{t\in [0,t']}|\partial_x^2 u_\eps^N(t)|_{L^2}^2 + \eps \E \int_0^{t'}|\partial_x^4 u_\eps^N(s)|_{L^2}^2 ds \leq C\biggl( 1 +  \E|u_0|_{L^2}^8 + \E|\partial_x^2 u_0|_{L^2}^2 + \E\int_0^{t'}|\partial_x^2 u_\eps^N(s)|_{L^2} ds \biggr).
\end{equation*}
We then apply Gronwall's Lemma to arrive at our desired result \eqref{ape: partial^2 u_eps^N}, i.e., 
\[
\E\sup_{t \in [0,T]} \biggl[ |\partial^2_x u^{N}_\eps(t)|_{L^2}^2 \biggr] + \eps \E \int_0^T |\partial_x^4 u^N_\eps(s)|_{L^2}^2 ds \leq Ce^{CT}\lp \E|u_0|_{L^2}^8 + \E|\partial_x^2 u_{0}|_{L^2}^2 + 1  \rp.
\]
In particular, using the equivalence of norms given in \eqref{Hs norm equivalent} for $m=2$, and the lower order bounds obtained in Theorem \ref{ape:thm1}, we obtain \eqref{ape: h2} from \eqref{ape: partial^2 u_eps^N}.
This concludes the proof of Theorem \ref{ape:thm3}.
\end{proof}
\subsection{Tightness} \label{section:tightness}
Thanks to the bounds obtained in Theorems \ref{ape:thm1} - \ref{ape:thm3}, we obtain convergence of the approximate solutions $u^N_\eps$, up to a subsequence, in the weak-* topology of the respective spaces. However, to pass  $N\to \infty$ in the non-linear term in the equation \eqref{eqn: galerkin relabel}, these weak-* convergence results will not be sufficient. Hence, to upgrade this mode of convergence, we will derive (compactness) results involving tightness of the laws of the aforementioned approximate solutions. Our compactness argument is based on the following version of the Aubin-Lions theorem (see e.g. \cite[Lemma 4.3]{NTT21}):
Since $H^2(\T) \subset\subset H^1(\T)$, the following embedding is compact; 
\begin{equation}\label{compact}
	L^\infty(0,T;H^2(\T)) \cap W^{1/4, 2}(0,T; H^{-2}(\T)) \subset \subset L^p(0,T; H^1(\T))\qquad\forall p>1.
\end{equation}
 Thus, to apply \eqref{compact}, we derive temporal bounds for the approximations $u_\epsilon^N$ in fractional Sobolev spaces. Our first observation involves the following lemma which provides the desired fractional derivative bounds in time, of order strictly less than $\frac12,$ for the stochastic integrals on the right side of \eqref{eqn: galerkin relabel}.
\begin{lemma}\label{lem:fracstochint}
For every $\alpha \in (0,1/2)$, there exists a constant $C = C(\alpha, T) > 0$ such that:
\begin{align*}
     &\E \bigg\| \int_0^\cdot G(u^N_\eps(s)) dW(s)  \bigg\|_{H^\alpha(0,T;L^2(\T))}^2 \leq C \E \int_0^T \|G(u^N_\eps(s))\|_{HS(U_0, L^2(\T))}^2 ds \leq C ,\\
     &\E \bigg\| \int_0^\cdot \int_{E_0} K(u^N_\eps(s),\xi) d \widehat{\pi}(s, \xi) \bigg\|_{H^\alpha(0,T;L^2(\T))}^2 \leq C \E \int_0^T \int_{E} |K(u^N_\eps(s),\xi)|^2_{L^2(\T)}d\nu(\xi) ds \leq C .
\end{align*}
\end{lemma}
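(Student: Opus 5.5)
Writing $M(t) := \int_0^t G(u^N_\eps(s))\,dW(s)$ and $J(t) := \int_0^t \int_{E_0} K(u^N_\eps(s),\xi)\,d\widehat{\pi}(s,\xi)$, I will bound the Sobolev--Slobodeckij norm
\[
\|f\|^2_{H^\alpha(0,T;L^2)} = \int_0^T |f(t)|^2_{L^2}\,dt + \int_0^T\int_0^T \frac{|f(t)-f(s)|^2_{L^2}}{|t-s|^{1+2\alpha}}\,ds\,dt
\]
directly in expectation. For both stochastic integrals the only tool needed is the It\^o isometry in $L^2(\T)$. The Wiener case and the compensated Poisson case are handled identically, because $\widehat{\pi}$ is a square-integrable martingale measure with $\E|\int_s^t\int_{E_0} \Phi\, d\widehat{\pi}|^2_{L^2} = \E\int_s^t \int_{E_0}|\Phi|^2_{L^2}\,d\nu\,dr$.

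\textbf{Step 1.} For $0\le s<t\le T$, the isometry (for $J$) and its Hilbert--Schmidt analogue (for $M$) give
\[
\E|M(t)-M(s)|^2_{L^2} = \E\int_s^t \|G(u^N_\eps(r))\|^2_{HS(U_0,L^2)}\,dr .
\]
The analogous identity holds for $J$ with integrand $\int_{E_0}|K(u^N_\eps(r),\xi)|^2_{L^2}\,d\nu(\xi)$. Note that $P_N$ is a contraction, so the projected versions are bounded the same way. Taking $s=0$ and integrating in $t$ bounds the $L^2(0,T;L^2)$ part by $T\,\E\int_0^T\|G\|^2_{HS}\,dr$.

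\textbf{Step 2.} Insert the Step 1 identity into the Gagliardo seminorm and use Fubini. Write $g(r)=\E\|G(u^N_\eps(r))\|^2_{HS}$. The double integral becomes
\[
\int_0^T\int_0^T |t-s|^{-1-2\alpha}\int_{s\wedge t}^{s\vee t} g(r)\,dr\,ds\,dt
= \int_0^T g(r)\Bigl(\iint_{s<r<t} 2|t-s|^{-1-2\alpha}\,ds\,dt\Bigr)dr .
\]
The inner integral is bounded uniformly in $r$ by
\[
\int_0^r\!\int_r^T (t-s)^{-1-2\alpha}\,dt\,ds \le \frac{1}{2\alpha}\int_0^r (r-s)^{-2\alpha}\,ds = \frac{r^{1-2\alpha}}{2\alpha(1-2\alpha)},
\]
which is finite precisely because $\alpha<1/2$. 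This yields the first inequality with $C=C(\alpha,T)$. The same computation works for $J$.

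\textbf{Step 3.} For the final bound by a constant, use \eqref{assumption: growth of G} and \eqref{assumption: growth of K} with $s=0$. This gives $\E\int_0^T \|G\|^2_{HS} + \int_{E_0}|K|^2 d\nu \le C\,\E\int_0^T(1+|u^N_\eps|^2_{L^2})\,ds$, and this is bounded uniformly in $N,\eps$ by Theorem \ref{ape:thm1} with $p=2$, given $\E|u_0|^2_{L^2}<\infty$. The integral over $E$ in the statement should be read as over $E_0$, where $K$ is defined.

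The main obstacle is the justification of the Fubini step and of the isometry for the Poisson integral in the Hilbert space $L^2(\T)$ rather than scalar. This is standard (cf. Peszat--Zabczyk), since the integrands are predictable and square integrable by Step 3. The only real care is the singular kernel computation, which is where $\alpha<1/2$ is used.
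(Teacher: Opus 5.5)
Your proof is correct and is essentially the argument behind the references the paper cites instead of giving a proof (\cite[Lemma 2.1]{flandoli1995martingale} and \cite[Lemma 3.4]{CTT18euler}): for $p=2$ those proofs reduce to your It\^o-isometry identity for increments followed by Tonelli on the Gagliardo seminorm, with $\alpha<1/2$ entering through the finiteness of $\int_0^r\int_r^T (t-s)^{-1-2\alpha}\,dt\,ds$. Your remark that the $\nu$-integral in the statement should be taken over $E_0$ rather than $E$, since $K$ is only defined on $E_0$, is also right.
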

\begin{proof}
	The proof of this theorem is standard. The interested reader is refered to \cite[Lemma 2.1]{flandoli1995martingale} and \cite[Lemma 3.4]{CTT18euler} for a proof.
\end{proof}
\noindent Next, we find bounds for the fractional time derivatives of the remaining (deterministic) drift terms in \eqref{eqn: galerkin relabel}. Then, using Lemma \ref{lem:fracstochint} we will arrive at the desired bounds for the approximations $u^N_\eps$.
\begin{theorem}[Fractional Sobolev estimates] \label{ape:thm_fractional}
    For any fixed $\alpha \in (0, 1/2)$, there exists a constant $C> 0$, depending only on $T, \alpha$ and the initial data $u_0$, such that 
    \begin{equation}
        \E\| u^N_\eps \|^2_{H^{\alpha}(0,T; H^{-2}(\T))} \leq C.
    \end{equation}
\end{theorem}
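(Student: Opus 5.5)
We write the Galerkin equation \eqref{eqn: galerkin relabel} in integral form,
\begin{equation*}
u^N_\eps(t) = P_N u_0 - \int_0^t \Big( \theta_N(|u^N_\eps|_{H^1})\big(\partial_x^3 u^N_\eps + P_N(u^N_\eps \partial_x u^N_\eps)\big) + \gamma u^N_\eps + \eps \partial_x^4 u^N_\eps \Big) ds + \int_0^t P_N G(u^N_\eps) dW + \int_0^t\int_{E_0} P_N K(u^N_\eps,\xi) d\widehat{\pi},
\end{equation*}
and bound each of the four pieces in $H^\alpha(0,T;H^{-2}(\T))$ in mean square. The constant term $P_N u_0$ is trivial, since $\E|P_Nu_0|^2_{H^{-2}}\le \E|u_0|^2_{L^2}$. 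The two stochastic integrals are handled by Lemma \ref{lem:fracstochint}. We use that $P_N$ is a contraction and $L^2(\T)\hookrightarrow H^{-2}(\T)$. We then bound the right-hand sides of that lemma by \eqref{assumption: growth of G}, \eqref{assumption: growth of K} and Theorem \ref{ape:thm1} with $p=2$. This gives a bound $C(1+\E|u_0|^2_{L^2})$.

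For the drift we use the elementary embedding $W^{1,2}(0,T;H^{-2}(\T))\hookrightarrow H^\alpha(0,T;H^{-2}(\T))$ for $\alpha<1/2$. It therefore suffices to show
\[
\E\int_0^T |f(s)|^2_{H^{-2}}\,ds\le C,
\]
where $f$ is the integrand. We bound the terms of $f$ one at a time, using $\theta_N\le 1$ and $|P_N v|_{H^{-2}}\le |v|_{H^{-2}}$.
\begin{itemize}
\item[(i)] $|\partial_x^3 u^N_\eps|_{H^{-2}}\le C|u^N_\eps|_{H^1}\le C|u^N_\eps|_{H^2}$.
\item[(ii)] $|\gamma u^N_\eps|_{H^{-2}}\le \gamma|u^N_\eps|_{L^2}$.
\item[(iii)] $|\eps\partial_x^4 u^N_\eps|_{H^{-2}}\le \eps |u^N_\eps|_{H^2}$, using $\eps\le1$.
\item[(iv)] For the nonlinear term we write $u\partial_x u=\tfrac12\partial_x(u^2)$. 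Then $|u\partial_x u|_{H^{-2}}\le C|u^2|_{L^2}=C|u|_{L^4}^2\le C|u|_{L^2}^{3/2}|u|_{H^1}^{1/2}$ by Agmon's inequality \eqref{agmon}. Alternatively, one can bound it directly by $C|u|_{L^2}|u|_{H^2}$.
\end{itemize}
Squaring and integrating, (i)--(iii) are bounded by $CT\,\E\sup_t|u^N_\eps|^2_{H^2}$, which is controlled by \eqref{ape: h2} of Theorem \ref{ape:thm3}.

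The only step requiring some care is the quadratic term (iv). After squaring it gives $\E\int_0^T |u^N_\eps|_{L^2}^3|u^N_\eps|_{H^1}\,ds$. By Young's inequality this is at most $CT\big(\E\sup_t|u^N_\eps|^6_{L^2}+\E\sup_t|u^N_\eps|^2_{H^2}\big)$. Both quantities are finite and uniform in $N,\eps$, by Theorem \ref{ape:thm1} with $p=6$ and by Theorem \ref{ape:thm3}. This is exactly where the assumption $u_0\in L^8_\P L^2_x$ enters. We avoid the cruder bound $|u|_{L^2}|u|_{H^2}$, whose square would need a mixed moment of order $4$ in $H^2$ that is not available.

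Summing the four contributions gives
\[
\E\|u^N_\eps\|^2_{H^\alpha(0,T;H^{-2})}\le C\big(1+\E|u_0|^8_{L^2}+\E|u_0|^2_{H^2}\big),
\]
with $C$ independent of $N$ and $\eps$, which is the claim.
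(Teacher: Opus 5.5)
Your proof is correct and follows the same route as the paper: Lemma~\ref{lem:fracstochint} handles the two stochastic integrals, and the embedding $H^1(0,T;H^{-2}(\T))\hookrightarrow H^\alpha(0,T;H^{-2}(\T))$ reduces the drift to $L^2(0,T;H^{-2}(\T))$ bounds, which are controlled by Theorems~\ref{ape:thm1} and~\ref{ape:thm3}. Your treatment of the quadratic term is more careful than the paper's, which asserts $\theta_N|u^N_\eps\partial_x u^N_\eps|^2_{H^{-2}}\le C|u^N_\eps|^2_{L^4}$ although the correct power is $4$; your interpolation $|u|_{L^4}^4\le C|u|_{L^2}^3|u|_{H^1}$, combined with the $p=6$ moment bound, is what closes that step uniformly in $N$ and $\eps$.
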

    \begin{proof}
In the previous result, Lemma \ref{lem:fracstochint}, we found bounds for fractional derivative in time of the stochastic integrals appearing on the right hand side of the equation \eqref{eqn: galerkin relabel}. Hence, now we consider the remaining drift terms in \eqref{eqn: galerkin relabel}:
\begin{equation}
    \begin{aligned}
     & \bigg\| u^N_{0} - \int_0^{\cdot} \partial_x^3 u^N_\eps(s) ds - \int_0^{\cdot} P_N(\theta_N(|u^N_\eps(s)|_{L^2}) u^N_\eps(s) \partial_x u^N_\eps(s)) ds - \int_0^{\cdot} \eps \partial_x^4 u^N_\eps (s) ds\bigg\|_{H^\alpha(0,T; H^{-2})}^2    \\
     & \leq C \bigg\| u^N_{0} - \int_0^{\cdot} \partial_x^3 u^N_\eps(s) ds - \int_0^{\cdot} P_N(\theta_N(|u^N_\eps(s)|_{L^2}) u^N_\eps(s) \partial_x u^N_\eps(s)) ds - \int_0^{\cdot} \eps \partial_x^4 u^N_\eps (s) ds \bigg\|_{H^1(0,T; H^{-2})}^2 \\
     & \leq C \bigg\| u^N_{0} - \int_0^{\cdot} \partial_x^3 u^N_\eps(s) ds - \int_0^{\cdot} P_N(\theta_N(|u^N_\eps(s)|_{L^2}) u^N_\eps(s) \partial_x u^N_\eps(s)) ds - \int_0^{\cdot} \eps \partial_x^4 u^N_\eps (s) ds \bigg\|_{L^2(0,T; H^{-2})}^2 \\
     & + C \biggl( \int_0^T |\partial_x^3 u^N_\eps(s)|_{H^{-2}}^2 ds + \int_0^T |u^N_\eps(s) \partial_x u^N_\eps(s)|_{H^{-2}}^2 ds + \eps^2 \int_0^T|\partial_x^4 u^N_\eps(s)|_{H^{-2}}^2 ds \biggr) \\
     & \leq C \biggl( |u_{0}^N|_{L^2}^2 +  \int_0^T |\partial_x^3 u^N_\eps(s)|_{H^{-2}}^2 ds + \int_0^T \theta_N(|u_\eps^N(s)|_{H^1}) |u^N_\eps(s) \partial_x u^N_\eps(s)|_{H^{-2}}^2 ds + \eps^2 \int_0^T|\partial_x^4 u^N_\eps(s)|_{H^{-2}}^2 ds  \biggr) \\
     & =: C\bigl( |u_{0}^N|_{H^{-2}}^2 + I_1 + I_2 + I_3 \bigr).
    \end{aligned}
\end{equation}
Next, observe that, for any $v \in H^2(\T)$, upon integrating by parts we obtain
\begin{equation*}
	|(\partial_x^3 u_\eps^N, v)_{L^2}| = |(\partial_x u_\eps^N, \partial_x^2 v)_{L^2}| \leq |\partial_x u_\eps^N|_{L^2} |v|_{H^2}.
\end{equation*}
Hence, $ |\partial_x^3 u_\eps^N|_{H^{-2}} \leq |\partial_x u_{\eps}^N|_{L^2}$. 
Similarly, we have that $ |\partial_x^4 u_\eps^N|_{H^{-2}} \leq |\partial^2_x u_{\eps}^N|_{L^2}.$

Hence, by a straightforward application of the bounds obtained in Theorem \ref{ape:thm3}, we conclude that
\begin{equation*}
	\E[I_1 + I_3] \leq C + C\eps^2 \leq C. 
\end{equation*}
For $I_2$, we note by Sobolev embedding $H^2(\T) \hookrightarrow L^4(\T)$ and Theorem \ref{ape:thm3}, that 
\begin{equation*}
    \begin{aligned}
        \E \int_0^T \theta_N(|u_\eps^N(s)|_{H^1})| u^N_\eps(s) \partial_x u_\eps^N(s)|_{H^{-2}}^2 ds \leq C \E \int_0^T  |u_\eps^N(s)|_{L^4}^2 ds  \leq C \E\sup_{t \in [0,T]} |u_\eps^N(t)|_{H^2}^2  \leq C.
    \end{aligned}
\end{equation*}
Combining the estimates for $I_1, I_2, I_3$ we end the proof of Theorem \ref{ape:thm_fractional}.
\end{proof}
Now using this temporal regularity result we prove the following tightness result.
\begin{proposition}\label{tightness:L^2(0,T; L^2)}
The sequence of laws of the solutions $\lp u_\eps^N \rp_{N=1}^\infty$ to \eqref{eqn: galerkin relabel}, is tight in $L^2(0,T; H^1(\T))$.
\begin{proof}
	The proof of this Proposition relies on the compact embedding \eqref{compact}. For that purpose,
we begin by defining the following bounded ball, 
\begin{equation}
    \begin{aligned}
        B_r := \big \{v \in L^2(0, T; H^2(\T)) \cap W^{1/4, 2}(0,T; H^{-2}(\T)): 
        \|v\|_{L^2(0, T; H^2(\T))} + \|v\|_{W^{1/4, 2}(0,T; H^{-2}(\T))} \leq r \big \}.
    \end{aligned}
\end{equation}
 The compact embedding \eqref{compact} then implies that $B_r$ is compact in $L^2(0,T; H^1(\T))$. Thanks to Theorems \ref{ape:thm3} and \ref{ape:thm_fractional} along with an application of the Chebyshev inequality, we obtain
\begin{equation}
    \begin{aligned}
         \P \big[  u_\eps^N \notin B_r \big] & \leq \P\lp \|u_\eps^N \|_{L^2(0,T;H^2(\T))} > \frac{r}{2} \rp + \P\lp \|u_\eps^N \|_{W^{1/4, 2}(0,T; H^{-2}(\T))} > \frac{r}{2} \rp \\
        & \leq \frac{4}{r^2} \lp \E\|u_\eps^N \|_{L^2(0,T;H^2(\T))} + \E \|u_\eps^N \|_{W^{1/4, 2}(0,T; H^{-2}(\T))} \rp \\
        & \leq \frac{C}{r^2},
    \end{aligned}
\end{equation}
where the constant $C$ is independent of $N$ and $\eps$. This shows that the laws of $\lp u_\eps^N \rp_{N=1}^\infty$ form a tight sequence of measures in $L^2(0,T; H^1(\T))$.
\end{proof}
\end{proposition}
Recall that in Definition \ref{definition: martingale} we require the martingale solution to be a process that has c\`adl\`ag sample paths in the space $H^{-2}(\T)$. This criterion gives us the desired measurability of the noise coefficients required for the stochastic integrals to be well-defined. Hence, to obtain a  c\`adl\`ag candidate solution we must establish tightness of the laws of the approximation solutions $u^N_\eps$ in the space of c\`adl\`ag-in-time functions endowed with the Skorohod topology for which this space of c\`adl\`ag functions is separable and metrizable; see below and \cite{billingsley2013convergence}. The Skorohod topology is complicated to analyze. However, a classical result of Aldous \cite{Aldous78} gives us an equivalent condition which we restate in the following theorem.
\begin{theorem}[Aldous tightness criterion \cite{Aldous78}] \label{thm:aldous}
	Let $(S,d)$ be a separable complete metric space and let $X_N = (X_N(t))_{t\in[0,T]}$ be a sequence of $S$-valued stochastic processes defined on a filtered probability space $(\Omega, \F, \P, (\F_t)_{t \geq 0})$ such that each $X_N$ is $(\F_t)_{t\in[0,T]}$-adapted and has c\`{a}dl\`{a}g sample paths in $S$.
	Assume that the following conditions hold:
	\begin{enumerate}[(i)]
		\item \label{aldous:i} There exists a dense subset $D\subset[0,T]$ such that for every $t\in D$, the sequence of laws of $X_N(t)$ is tight in $S$.
		\item \label{aldous:ii} For every $\eta>0$ and $\varepsilon>0$, there exists $\delta>0$ such that for every sequence $(\tau_N)_{N=1}^\infty$ of $(\mathcal F_t)_{t \geq 0}$-stopping times bounded by $T$,
		\begin{equation*}
			\sup_{N\geq 1}\ \sup_{0\leq t\leq \delta} \P\bigl( d(X_N((\tau_N + t) \wedge T), X_N(\tau_N)) > \eta  \bigr) \leq \varepsilon.
		\end{equation*}
	\end{enumerate}
	Then the sequence $(X_N)_{n\ge1}$ is tight in the Skorokhod space $\mathcal{D}([0,T];S)$.
\end{theorem}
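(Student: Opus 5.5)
The statement is Aldous' classical criterion, and I would prove it by reducing it to the standard characterization of tightness in $\mathcal D([0,T];S)$ (Billingsley, Thm.~13.2/13.3; Jacod--Shiryaev VI.3.21). Let
\[
w'(x,\delta)=\inf_{\{t_i\}}\max_i\sup_{s,t\in[t_{i-1},t_i)}d(x(s),x(t)),
\]
where the infimum runs over partitions $0=t_0<\dots<t_k=T$ with $t_i-t_{i-1}>\delta$. That characterization says a sequence is tight if two things hold:
\begin{enumerate}[(a)]
\item the one-dimensional laws are tight at the times of a dense set (containing $T$);
\item for all $\eta,\varepsilon>0$ there is $\delta>0$ with $\limsup_N\P(w'(X_N,\delta)>\eta)\le\varepsilon$.
\end{enumerate}
Condition (i) gives (a); tightness at $T$ follows from (i) together with (ii) applied with $\tau_N=T-t$. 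All the work is therefore in deriving (b) from the stopping-time condition (ii).

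\textbf{Step 1: random lags.} First I would upgrade (ii) from deterministic lags $t\le\delta$ to random ones. The target is the following claim: for stopping times $\sigma_N\le\rho_N\le(\sigma_N+\delta)\wedge T$,
\[
\P\big(d(X_N(\sigma_N),X_N(\rho_N))>2\eta\big)\le C\varepsilon .
\]
I expect this to be the main obstacle, since (ii) only controls fixed increments. The trick I would try first is averaging. Take $\theta$ uniform on $[0,2\delta]$, independent of everything, and enlarge the filtration so that $\theta$ is $\F_0$-measurable; then $\sigma_N+\theta$ and $\rho_N+\theta$ are still stopping times. Use the triangle inequality through the intermediate point $X_N(\sigma_N+\theta)$. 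The term $d(X_N(\sigma_N),X_N(\sigma_N+\theta))$ is controlled by (ii) after integrating over $\theta$. For the term $d(X_N(\rho_N),X_N(\sigma_N+\theta))$, note that on $\{\theta\ge\rho_N-\sigma_N\}$ it is a forward increment from $\rho_N$ of length at most $2\delta$. The event $\{\theta<\rho_N-\sigma_N\}$ has probability at most $1/2$. Iterating, or replacing $2\delta$ by $K\delta$, makes that bad probability $\le 1/K$, so the constants close. Conditional independence of $\theta$ from $\F_{\rho_N}$ is what justifies applying (ii) at the stopping time $\rho_N+\cdot$.

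\textbf{Step 2: oscillation stopping times.} Next define $\tau^N_0=0$ and
\[
\tau^N_{k+1}=\inf\{t>\tau^N_k: d(X_N(t),X_N(\tau^N_k))>\eta/2\}\wedge T.
\]
On the event where $\tau^N_K=T$ for some fixed $K$ and all gaps satisfy $\tau^N_{k+1}-\tau^N_k>\delta$ whenever $\tau^N_{k+1}<T$, the partition $\{\tau^N_k\}$ gives $w'(X_N,\delta)\le\eta$. A final short interval can be absorbed by the usual convention for $w'$, so (b) follows once both failure probabilities are small.

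\textbf{Step 3: small gaps are rare.} If $\tau^N_{k+1}<T$, right-continuity gives $d(X_N(\tau^N_{k+1}),X_N(\tau^N_k))\ge\eta/2$. Applying Step 1 with $\sigma=\tau^N_k$ and $\rho=\tau^N_{k+1}\wedge(\tau^N_k+\delta)$ then gives
\[
\P(\tau^N_{k+1}-\tau^N_k\le\delta,\ \tau^N_{k+1}<T)\le C\varepsilon(\delta).
\]

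\textbf{Step 4: few oscillations.} The same estimate with a fixed $\delta_0$ yields $\E[(\tau^N_{k+1}-\tau^N_k)\wedge\delta_0\mid \tau^N_k<T]\ge\delta_0/2$ uniformly in $N$ and $k$. Summing these increments, which total at most $T$, gives $\P(\tau^N_K<T)\le 4T/(K\delta_0)$, which is small for $K$ large. With $K$ fixed, choose $\delta$ so that $K\cdot C\varepsilon(\delta)$ is small. This establishes (b) and hence tightness in $\mathcal D([0,T];S)$.
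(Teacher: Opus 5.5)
The paper does not prove this statement; it only quotes it from Aldous \cite{Aldous78}, so your argument has to stand on its own. Its architecture is the standard one: reduction to the $w'$-criterion, random lags by averaging, then oscillation stopping times.

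Step~1 is sound, and you need neither $\theta$ nor an enlarged filtration (note that (ii) is not assumed for stopping times of an enlarged filtration). On $\{d(X_N(\sigma_N),X_N(\rho_N))>2\eta\}$, for every $s\in[\rho_N-\sigma_N,2\delta]$, an interval of length $\ge\delta$, one of $d(X_N(\sigma_N),X_N((\sigma_N+s)\wedge T))$ or $d(X_N(\rho_N),X_N((\sigma_N+s)\wedge T))$ exceeds $\eta$. Integrating in $s$ and using (ii) at $\sigma_N$ and at $\rho_N$ gives the bound $4\varepsilon$ directly. There are, however, two genuine gaps later on.

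\emph{Step 4.} The inequality $\E[(\tau^N_{k+1}-\tau^N_k)\wedge\delta_0\mid\tau^N_k<T]\ge\delta_0/2$, uniformly in $N,k$, does not follow from Step~3 and is false in general. Step~3 is an unconditional bound of size $C\varepsilon$, which says nothing once $\P(\tau^N_k<T)$ is itself of that order. For example, let $X_N$ be, with probability $1/N$, a rate-$N$ Poisson process and otherwise $\equiv0$, with the natural filtration. Then (i) and (ii) hold (the strong Markov property gives the bound $t$ in (ii)). Yet conditionally on $\{\tau^N_2<T\}$ the next gap is $\mathrm{Exp}(N)$, so no fixed $\delta_0$ works for all $N$. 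In addition, when $\tau^N_k$ is within $\delta_0/2$ of $T$, the truncated gap is $<\delta_0/2$ whatever the process does.

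What you need instead is an unconditional count. On $\{\tau^N_K<T\}$ the $K$ gaps sum to less than $T$ and all end before $T$, so more than $K-T/\delta_0$ of them are $\le\delta_0$. Taking expectations,
\[
\P(\tau^N_K<T)\Bigl(K-\frac{T}{\delta_0}\Bigr)\le K\beta(\delta_0),\qquad \beta(\delta_0):=\sup_{N,k}\P\bigl(\tau^N_{k+1}-\tau^N_k\le\delta_0,\ \tau^N_{k+1}<T\bigr).
\]
Choosing $K=\lceil 2T/\delta_0\rceil$ gives $\P(\tau^N_K<T)\le2\beta(\delta_0)$, which Step~3 makes small.

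\emph{The last interval (Step 2).} On $[0,T]$ the $w'$-criterion is valid only with the convention you wrote down, where every interval, including the last, is longer than $\delta$. The convention allowing a short final interval belongs to $[0,\infty)$, and on $[0,T]$ it does not imply tightness. For instance, $x_n=\mathbf{1}_{[T-1/n,T]}$ has tight marginals and $w'(x_n,\delta)=0$ under that convention, yet is not relatively compact. With the correct convention, a final interval $[\tau^N_{j-1},T)$ of length $\le\delta$ cannot be merged into the previous one, because $X_N$ moves by at least $\eta/2$, possibly much more, at $\tau^N_{j-1}$.

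One more use of (ii) near $T$ is needed. Apply Step~1 (window $2\delta$) with $\sigma_N=T-2\delta$ and $\rho_N$ the first time after $T-2\delta$ that $X_N$ leaves the $\eta/8$-ball around $X_N(T-2\delta)$, capped at $T$. This shows that, with high probability, the oscillation of $X_N$ on $[T-2\delta,T)$ is at most $\eta/4$. Then keep only the $\tau^N_k<T-\delta$, with $\tau^N_i$ the last of them, and close the partition with $T$. The last interval is longer than $\delta$ and lies in the union of the overlapping intervals $[\tau^N_i,\tau^N_{i+1})$ and $[T-2\delta,T)$, so its oscillation is at most $5\eta/4$. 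With these two repairs the argument goes through.
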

By using the Aldous tightness criteria Theorem \ref{thm:aldous}, we will establish the following tightness result for the laws of our approximation solutions.
\begin{proposition}\label{tightness:Dspace}
The sequence of laws of
the approximations $\lp u_\eps^N \rp_{N=1}^\infty$ is tight in the space $\mathcal{D}([0,T]; H^{-2}(\T))$ endowed with the Skorohod topology.
\begin{proof} 
	We will verify the two conditions in Theorem \ref{thm:aldous} for $S := H^{-2}(\T)$ and $X_N := u_\eps^N$. \\
{\bf Step 1:}	First, we will show that the approximation solutions $u^N_\eps$ satisfy the assumption (i) in Theorem \ref{thm:aldous}. In other words, we will prove that
	for each $t \in [0, T]$, the sequence of the laws of the $L^2(\T)$-valued random variables $\lp u_\eps^N (t)\rp_{N=1}^\infty$ is tight in $H^{-2}(\T)$.
	 	
	 	Let ${ B_r := \{v \in L^2(\T): |v|_{{L^2}} \leq r \}}$. We know that by Sobolev embedding theorem, $B_r$ is compactly embedded in {$H^{-2}(\T)$}. We now apply Chebyshev's inequality and Theorem \ref{ape:thm1}, and obtain, for any $t \in [0,T]$, that
	 \begin{equation}
	 	\begin{aligned}
	 		\P( u_\eps^N(t) \notin B_r ) = \P( |u_{\eps}^N(t)|_{L^2} > r ) \leq \frac{
	 		\E\sup_{t\in [0,T]}|u_\eps^N(t)|_{L^2}^2 }{r^2} \leq \frac{C}{r^2},
	 	\end{aligned}
	 \end{equation}
	 where $C$ is independent of $N$ and $t$. This shows that the laws of $\lp u_\eps^N(t) \rp_{N=1}^\infty$ are tight in ${ H^{-2}(\T)}$.

\noindent {\bf Step 2:} Next, we will show that the approximation solutions $u^N_\eps$ satisfy the assumption (ii) in Theorem \ref{thm:aldous}.
In other words, we will  prove that for each sequence $(\tau_N)_{N = 1}^\infty$ of $(\F_t)_{t\geq 0}$-stopping times which are bounded by $T$, the following condition is satisfied $\P$-a.s.
\begin{equation} \label{aldous condition in V_2'}
    \lim_{t' \to 0^+} \sup_{N\in \N}  \sup_{t \in [0,t']}\E\big|u_\eps^N((\tau_N+t) \wedge T) - u_\eps^N(\tau_N)\big|^2_{H^{-2}} = 0.
\end{equation}
We consider the SPDE \eqref{eqn: galerkin relabel} satisfied by the functions $u_\eps^N$ and observe for fixed $t$ by applying the H\"{o}lder inequality, that,
\begin{equation}
    \begin{aligned}
       & \E\big|u_\eps^N((\tau_N+t)\wedge T) - u_\eps^N(\tau_N)\big|_{H^{-2}}^2\\
         & \leq  t \cdot \E\int_{\tau_n}^{(\tau_n+t) \wedge T}|\partial_x^3 u_\eps^N(s)|_{H^{-2}}^2 ds + t \cdot \E \eps^2 \int_{\tau_n}^{(\tau_n+t) \wedge T}|\partial_x^4 u_\eps^N(s)|_{H^{-2}}^2 ds\\
        & + t \cdot \E \int_{\tau_n}^{(\tau_n+t) \wedge T}|\theta_N(|u_\eps^N(s)|_{H^1})u_\eps^N(s) \partial_x u_\eps^N(s)|_{H^{-2}}^2 ds  \\
        & + \E \bigg|\int_{\tau_n}^{(\tau_n+t) \wedge T}P_N G(u_\eps^N(s)) dW(s) \bigg|_{H^{-2}}^2  + \E\bigg| \int_{\tau_n}^{(\tau_n+t) \wedge T}\int_{E_0}P_N K(u_\eps^N(s), \xi) d\widehat{\pi}(\xi, s)  \bigg|_{H^{-2}}^2,  \\
        & =: J_1(t) + J_2(t) + J_3 (t)+ J_4(t) + J_5(t).
    \end{aligned}
\end{equation}
Our aim thus is to show that $\lim_{t' \to 0^+}{\sup_{N\in \N} \sup_{t\in[0,t']}}J_i(t) = 0$ for $i = 1,2,3,4, 5$. Observe that by Theorem \ref{ape:thm3}, there exists some $C$ independent of $N$ and $\eps$ and $R$, that
    \begin{align*}
     J_1 (t)&\leq t \cdot \E \int_{\tau_n}^{(\tau_n+t) \wedge T}|\partial_x^3 u_\eps^N(s)|_{H^{-2}}^2 ds  \leq t \cdot T \E \sup_{t \in [0,T]}|\partial_x^2 u_\eps^N|_{L^2}^2 \leq Ct, \\
  J_2(t) &\leq t\cdot \eps^2 \E \int_{\tau_n}^{(\tau_n+t) \wedge T}|\partial^4_x u_\eps^N(s)|_{L^2}^2 ds \leq Ct, \\
   J_3(t) & \leq C t \cdot \E \int_{\tau_n}^{(\tau_n+t) \wedge T} |u_\eps^N(s)|_{H^2}^2 ds \leq Ct.
    \end{align*}
Thanks to the It\^o isometry, the growth conditions of $G$ and $K$ \eqref{assumption: growth of G}-\eqref{assumption: growth of K} and Theorem \ref{ape:thm1}, the two terms $J_4$ and $J_5$ can be treated similarly:
\begin{align*}
  (J_4(t)+ J_5(t) )&\leq Ct \cdot \E \int_{\tau_n}^{(\tau_n+t) \wedge T}\bigl( 1 + |u_\eps^N(s)|_{L^2}^2 \bigr) ds \leq C t \lp  1 + \E \sup_{t\in [0,T]} |u_\eps^N(t)|_{L^2}^2 \rp \leq Ct.
\end{align*}
We emphasize that this constant $C$ is independent of $t', N$ and $\eps$. It then follows that 
\[
    \sup_{N\in \N} \sup_{t\in [0,t']}\E\big|u_\eps^N((\tau_n+t) \wedge T) - u_\eps^N(\tau_n)\big|_{H^{-2}}^2 \leq \sup_{N\in \N}  \sup_{t \in [0,t']} (J_1(t) + J_2(t) + J_3(t) + J_4 (t)+ J_5(t)) \leq Ct'.
\]
Passing $t' \to 0$ yields the desired result \eqref{aldous condition in V_2'}.
\end{proof}
\end{proposition}
{ 
	Next, we leverage the arguments in the proof of Proposition \ref{tightness:L^2(0,T; L^2)}, Proposition \ref{tightness:Dspace} and \cite[Lemma 2]{motyl2013} to establish the tightness of $\lp u_\eps^N \rp_{N=1}^\infty$ 
in the space
	\begin{equation} \label{chi space}
		\mathcal{X}_u := L^2(0,T; H^1(\T)) \cap \mathcal{D}([0,T]; H^{-2}(\T)) \cap \mathcal{D}([0,T]; H^{2}_w(\T)),
	\end{equation} 
equipped with the product topology.	Here, $H^2_w(\T)$ denotes the space $H^2(\T)$ equipped with the weak topology. 
\begin{proposition} \label{tightness:L2H2weak}
	The sequence of laws of
	the approximations $\lp u_\eps^N \rp_{N=1}^\infty$ is tight in the space $\mathcal{X}_u$.
\end{proposition}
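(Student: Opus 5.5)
The plan is to prove tightness separately in each of the three factor spaces of $\mathcal{X}_u$ and then combine them. For a product space, a family of laws is tight when its marginals are tight in each factor. Concretely, if $K_1, K_2, K_3$ are compact sets in the three factors, each of probability at least $1-\delta/3$ under every law, then the set $K_1\cap K_2\cap K_3$, viewed inside $\mathcal{X}_u$, is compact in the product topology. Indeed, a net in this intersection has a subnet converging in each factor. The limits all coincide as distributions on $[0,T]\times\T$, since every factor embeds continuously into distributions, e.g. into $L^1(0,T;H^{-2}(\T))$. This intersection also has probability at least $1-\delta$. Metrizability is not an issue: $\mathcal{D}([0,T];H^2_w(\T))$ is non-metrizable, but on bounded sets the weak topology is metrizable, and we only need compactness, together with the Jakubowski version of Skorohod later on.

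Two of the three factors are already done. Proposition \ref{tightness:L^2(0,T; L^2)} gives tightness in $L^2(0,T;H^1(\T))$, and Proposition \ref{tightness:Dspace} gives tightness in $\mathcal{D}([0,T];H^{-2}(\T))$.

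The remaining factor is $\mathcal{D}([0,T];H^2_w(\T))$, and here I would invoke \cite[Lemma 2]{motyl2013}. That lemma states that a set $\mathcal{K}$ of functions with
\begin{equation*}
\sup_{v\in\mathcal{K}}\sup_{t\in[0,T]}|v(t)|_{H^2}\le r
\end{equation*}
is relatively compact in $\mathcal{D}([0,T];H^2_w(\T))$, provided $\mathcal{K}$ is relatively compact in $\mathcal{D}([0,T];H^{-2}(\T))$. To use it, fix $\delta>0$. By Theorem \ref{ape:thm3} and Chebyshev's inequality, I choose $r$ so large that
\begin{equation*}
\P\Bigl(\sup_{t\in[0,T]}|u_\eps^N(t)|_{H^2}>r\Bigr)\le \frac{C}{r^2}<\frac{\delta}{2}
\end{equation*}
uniformly in $N$. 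By Proposition \ref{tightness:Dspace}, I choose a compact set $K\subset\mathcal{D}([0,T];H^{-2}(\T))$ with $\P(u_\eps^N\notin K)<\delta/2$ for all $N$. The set
\begin{equation*}
K\cap\Bigl\{\sup_{t}|v(t)|_{H^2}\le r\Bigr\}
\end{equation*}
then has probability at least $1-\delta$ for every $N$. By Motyl's lemma it is relatively compact in $\mathcal{D}([0,T];H^2_w(\T))$, so its closure serves as the compact set. The paths $u_\eps^N$ are c\`adl\`ag in $H_N$, which is finite-dimensional, so they are c\`adl\`ag in $H^2_w$ as well and the laws live on this space.

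I expect the main obstacle to be this weak-topology factor. One must check that the hypotheses of Motyl's lemma match our setting: the embedding $H^2(\T)\hookrightarrow H^{-2}(\T)$ is continuous and dense, and the compact set from Proposition \ref{tightness:Dspace} has to be intersected with the $H^2$-ball in a measurable way. I would handle the measurability by noting that
\begin{equation*}
v\mapsto\sup_t|v(t)|_{H^2}
\end{equation*}
is lower semicontinuous on $\mathcal{D}([0,T];H^{-2}(\T))$. The $H^2$-ball is therefore closed, and the intersection with $K$ is again compact in $\mathcal{D}([0,T];H^{-2}(\T))$. Combining the three factors as described in the first paragraph then completes the proof.
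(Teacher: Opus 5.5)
Your proposal is correct and follows essentially the paper's route: a Chebyshev bound from the uniform $\E\sup_{t}|u_\eps^N(t)|_{H^2}^2$ estimate of Theorem \ref{ape:thm3}, a compact set coming from the Aldous-criterion tightness in a negative-order Skorohod space, and \cite[Lemma 2]{motyl2013} to make their intersection compact in $\mathcal{D}([0,T];H^2_w(\T))$. The differences are minor: the paper works in $\mathcal{D}([0,T];H^{-3}(\T))$ and builds the $W^{1/4,2}$ bound into its set $A_1$, so that a single intersection also covers $L^2(0,T;H^1(\T))$; you instead work directly in $\mathcal{D}([0,T];H^{-2}(\T))$ and spell out, more carefully than the paper, why the $H^2$-ball is closed and why the three factor-wise compact sets combine into a compact set in $\mathcal{X}_u$.
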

	\begin{proof}
		Fix $\varepsilon > 0$. Following a similar argument to Proposition \ref{tightness:L^2(0,T; L^2)}, we can define $A_1 \subset L^2(0,T;H^1(\T))$ by
	\begin{equation*}
		A_1 := \bigg\{ v \in L^{ \infty}(0,T;H^2(\T)) \cap W^{1/4,2}(0,T;H^{-2}(\T): \sup_{t \in [0,T]} |v(t)|_{H^2} +  \|v \|_{W^{1/4, 2}(0,T;H^{-2})} < R \bigg\},
	\end{equation*}
	which is compact in $L^2(0,T; H^1(\T))$. By Chebyshev's inequality, we may choose $R$ sufficiently large, so that 
	\begin{equation*}
		\P(u_n \notin A_1)  < \frac{\varepsilon}{2}.
	\end{equation*}
	Next, note that by following the proof of Proposition \ref{tightness:Dspace}, we have that the laws of $\lp u_\eps^N \rp_{N=1}^\infty$ are tight in the space $\mathcal{D}([0,T]; H^{-3}(\T))$. In other words, there exists a compact subset $A_2 \subset \mathcal{D}(0,T;H^{-3}(\T))$ such that
	\begin{equation*}
			\P(u_n \notin A_2)  < \frac{\varepsilon}{2}.
	\end{equation*}
	By \cite[Lemma 2]{motyl2013} (see also \cite[Corollary 1]{motyl2013}), we infer that $A_1 \cap A_2$ is a compact subset in $\mathcal{D}([0,T];H_w^2(\T))$ and 
		\begin{equation*}
		 \P(u_n \notin A_1 \cap A_2) \leq \P(u_n \notin A_1) + \P(u_n \notin A_2) < \varepsilon. 
	\end{equation*}
	Since $\varepsilon > 0$ is arbitrary, we conclude that the laws of $\lp u_\eps^N \rp_{N=1}^\infty$ are tight in $\mathcal{D}([0,T];H_w^2(\T))$.
	\end{proof}}
\subsection{Passage to the limit $N \to \infty$}\label{Ntoinfty}

Let $\mathcal{N}_{[0,T)\times E}^{\#}$ be the space of counting measures on  $[0,T) \times E$ that are finite on bounded subsets. We endow $\mathcal{N}_{[0,T)\times E}^{\#}$ with the topology induced by weak-\# convergence. We say that $\mu_n\to\mu$ weakly$-\#$ if, 
	\begin{equation} \label{weak hash conv}
		\int_{[0,T)\times E} f d\mu_n \to \int_{[0,T)\times E} fd\mu
	\end{equation}
for every real-valued $f$ that is bounded, continuous and has bounded support in $[0,T)\times E$.
Thanks to Proposition 9.1.IV in \cite{daley2008introduction}, we know that this space is separable and metrizable by a complete metric. 

	We note that $\lp u^{N}_{0}, u^N_\eps, W, \pi \rp_{N \in \N}$ is tight in the space
\begin{equation}
    \mathcal{X} := L^2(\T) \times \mathcal{X}_u \times C([0,T], U_0) \times \mathcal{N}_{[0,T)\times E}^{\#}
\end{equation}
endowed with the product topology. We denote by $\mu^N_\eps$ the law of $\lp u^{N}_{0}, u^N_\eps, W, \pi \rp$ in $\mathcal{X}$.  By Prohorov's Theorem, there exists a subsequence of $\{\mu^N_\eps\}_{N \in \Lambda}$ for some countable set $\Lambda \subset \N$, such that
\begin{equation}\label{weakconv}
    \mu^N_\eps \text{ converges weakly to } \mu_\eps \text{ as $N\to \infty$ along $\Lambda$ on the space $\mathcal{X}$.}
\end{equation}
This weak convergence of measures will be upgraded to almost sure convergence by an application of the version the Skorohod Representation theorem given by Jakubowski in \cite{jakubowski1998almost}. 
\begin{theorem}\label{thm:skorohod}
    Suppose that  ${u_{0}} \in L^8(\Omega, \F_0, \P; L^2(\T)) \cap L^2(\Omega, \F_0, \P;H^2(\T))$ has law ${ \mu_0}$. 
    Then there exist a probability space $(\widetilde{\Omega}, \widetilde{\F}, \widetilde{\P})$\footnote{$(\widetilde{\Omega}, \widetilde{\F}, \widetilde{\P}) =([0, 1), \text{Borel sets of }[0,1), \text{Lebesgue measure}) $} and 
    $\mathcal{X}$-valued random variables $\lp \widetilde{u}_{0}, \widetilde{u_\eps}, \widetilde{W}, \widetilde{\pi} \rp$ and $\lp \widetilde{u}^N_{0}, \widetilde{u_\eps}^N, \widetilde{W}_{N}, \widetilde{\pi}_{N} \rp_{N\in \Lambda}$ such that 
    \begin{enumerate}[(i)]
        \item  For each $N \in \Lambda$, $\lp \widetilde{u}_{0}^N, \widetilde{u}_\eps^N, \widetilde{W}_N, \widetilde{\pi}_N \rp$ has law $\mu^N_\eps$ , $\lp \widetilde{u}_{0}, \widetilde{u}_\eps, \widetilde{W}, \widetilde{\pi} \rp$ has law $\mu_\eps$ and $\widetilde{u}_{0}$ has law $\mu_{0}$,
     \item The following convergence holds  $\widetilde{\P}$-almost surely,  \begin{equation} \label{as conv: galerkin}
            \lim_{N \to \infty}\lp \widetilde{u}_{0}^N, \widetilde{u}_\eps^N, \widetilde{W}_N, \widetilde{\pi}_N \rp = \lp \widetilde{u}_{ 0}, \widetilde{u_\eps}, \widetilde{W}, \widetilde{\pi} \rp, 
        \end{equation}
        \item For some filtration $( \widetilde{\F}_t)_{t\in[0,T]}$, that satisfies the usual condition, $\tilde W_N$ is an $( \widetilde{\F}_t)_{t\in[0,T]}$-Wiener process, and $\tilde \pi_N$ is a time homogeneous Poisson random measures with intensity measure $dt\otimes d\nu$.
        \item For every $N \in \Lambda$, $\widetilde{u}^N_\eps$ is a solution to the Galerkin scheme \eqref{eqn: galerkin relabel} with respect to the stochastic basis $\lp \widetilde{\Omega}, \widetilde{\F}, \widetilde{\P}, (\TF^N_t)_{t \in [0,T]}, \widetilde{W}_N, \widetilde{\pi}_N \rp$ and $\widetilde{u}_\eps^N(0) = { \widetilde{u}_{0}^N }$, $\widetilde{\P}$-a.s. 
     
    \end{enumerate}
\end{theorem}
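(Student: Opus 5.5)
The plan is to apply the Jakubowski version of the Skorohod theorem \cite{jakubowski1998almost} to the tight family of laws $(\mu^N_\eps)_{N\in\Lambda}$ on $\mathcal{X}$. That theorem does not require $\mathcal{X}$ to be Polish. It only needs a countable family of continuous real functions on $\mathcal{X}$ that separate points.

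\textbf{Step 1: separating functions, factor by factor.}
\begin{itemize}
\item $L^2(\T)$, $L^2(0,T;H^1(\T))$ and $C([0,T];U_0)$ are Polish, so their metrics give such families.
\item $\mathcal{D}([0,T];H^{-2}(\T))$ with the Skorohod topology is separable and metrizable by a complete metric, so the same holds.
\item $\mathcal{N}^{\#}_{[0,T)\times E}$ is Polish by Proposition 9.1.IV of \cite{daley2008introduction}.
\item For $\mathcal{D}([0,T];H^2_w(\T))$, take the maps $v\mapsto \int_0^T \langle v(t),h_j\rangle_{H^2}\,\phi_k(t)\,dt$. Here $(h_j)$ is dense in $H^2$ and $(\phi_k)$ is dense in $C([0,T])$. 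These are continuous on the weak Skorohod space and separate points by right-continuity of paths.
\end{itemize}
Products of countably many such families again separate points. The Skorohod theorem then gives $(\widetilde{\Omega},\widetilde{\F},\widetilde{\P})=[0,1)$ with Lebesgue measure, together with random variables having the laws $\mu^N_\eps$ and $\mu_\eps$ and converging $\widetilde{\P}$-a.s. in $\mathcal{X}$. This proves (i) and (ii). That $\widetilde u_0$ has law $\mu_0$ follows because $P_Nu_0\to u_0$ in $L^2$, so the first marginals of $\mu^N_\eps$ converge to $\mu_0$.

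\textbf{Step 2: the filtration, item (iii).} Set
\[
\TF^N_t:=\sigma\bigl(\widetilde u^N_0,\ \widetilde u^N_\eps(s),\ \widetilde W_N(s),\ \widetilde\pi_N|_{[0,s]\times E}:\ s\le t\bigr),
\]
then augment with null sets and take the right-continuous regularization; define $\TF_t$ analogously from the limit variables. Being a Wiener process, or a Poisson random measure with intensity $dt\otimes d\nu$, is a property of the joint law with the past. On the original space $W$ and $\pi$ are independent of $(u_0,u^N_\eps)$ up to time $s$ and have independent increments. This transfers because equality of laws preserves statements such as
\[
\E\bigl[\Psi(\text{past})\,e^{i\langle \widetilde W_N(t)-\widetilde W_N(s),\,h\rangle}\bigr]=\E\bigl[\Psi(\text{past})\bigr]\,e^{-\frac12(t-s)\langle \mathcal{Q}h,h\rangle},
\]
for bounded continuous $\Psi$, together with the analogous Laplace-functional identity for $\widetilde\pi_N$. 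For the limit, pass to the limit in these identities using the a.s. convergence and continuity of the exponentials.

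\textbf{Step 3: the Galerkin equation, item (iv).} The law of a solution of the finite-dimensional SDE \eqref{eqn: galerkin relabel}, jointly with its noise, determines whether the equation holds. One route is to write the residual
\[
\mathcal{M}_N(t):=\widetilde u^N_\eps(t)-\widetilde u^N_0+\int_0^t(\text{drift})\,ds-\int_0^t P_NG\,d\widetilde W_N-\int_0^t\int_{E_0}P_NK\,d\widehat{\widetilde\pi}_N .
\]
One then shows $\widetilde\E|\mathcal{M}_N(t)|^2=0$, approximating the stochastic integrals by Riemann-type sums. These sums are measurable functionals of the path and the noise, so their laws coincide on both spaces. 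The alternative route, in the style of Bensoussan, is to note that the stochastic integral is a measurable function of the triple (integrand, $W$, $\pi$) on $\mathcal{X}$.

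\textbf{Main obstacle.} I expect Step 1 to be the main difficulty. The weak-topology Skorohod factor $\mathcal{D}([0,T];H^2_w)$ is non-metrizable, and so is the weak-$\#$ factor without care. I would check the separating family there carefully, invoking \cite{motyl2013} for the continuity of the functionals.
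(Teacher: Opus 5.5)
Your proposal is correct and follows the route the paper indicates. It uses Jakubowski's Skorohod theorem \cite{jakubowski1998almost} for (i)--(ii), obtains (iii) by transferring the Wiener and Poisson properties through equality of joint laws (a step the paper leaves implicit), and handles (iv) with the equivalence-of-laws argument from the appendix of \cite{CTT18euler}.

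The obstacle you anticipate is milder than you expect. As you note yourself in Step 1, $\mathcal{N}^{\#}_{[0,T)\times E}$ is Polish. Moreover, elements of $\mathcal{X}_u$ are c\`adl\`ag in $H^{-2}(\T)$, so the continuous functions coming from the $L^2(0,T;H^1(\T))$ factor already separate points of $\mathcal{X}_u$.
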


The filtration $(\TF^N_t)_{t \in [0,T]}$  is constructed as follows:
Let ${\mathcal{F}^N}'_t$ be the $\sigma$-field generated by the random variables $ \Tu_\eps^N(s), \widetilde{W}_N(s), \widetilde{\pi}_N((0,s] \times \Gamma)$ for all $s \leq t$ and all Borel subsets $\Gamma$ of $E_0$. Then we define
\begin{equation} \label{filtration}
    \begin{aligned}
    \mathcal{N} := \left\{ A \in \TF \mid \TP(A) = 0 \right\}, \quad
    \bar{\mathcal{F}}^N_t:= \sigma(\mathcal{F}'_t \cup \mathcal{N}),\quad 
    \widetilde{\mathcal{F}}^N_t := \bigcap_{s \geq t} \bar{\mathcal{F}}^N_s.
    \end{aligned}
\end{equation}
This gives a complete, right-continuous filtration $(\TF_t^N)_{t \in [0,T]}$, dependent on $N,\eps$ and the cutoff parameter $R$, defined on the new probability space $(\TOmega, \TF, \TP)$, to which the noise processes and the approximate solutions are adapted. The construction of a filtration $(\TF_t^\epsilon)_{t\in[0,T]}$ to which the limiting processes are adapted is identical.

The proof of this theorem is standard and thus we shall skip it.
Parts (i) -(iii) follow straight from the Skorohod Representation Theorem \cite{jakubowski1998almost}. Whereas, Part (iv) is proven in the appendix of \cite{CTT18euler} by using the equivalence of laws of the old and new solutions.
An immediate consequence of Part (iv) is that $\Tu_\eps^N$ inherits all the estimates derived for $u_\eps^N$ in Theorems \ref{ape:thm1}-\ref{ape:thm3}.
\begin{corollary}\label{cor:skorohod estimates}
    Let $\Tu_\eps^N$ be new random variable defined on $\lp \TOmega, \TF, \TP,  (\TF_t)_{t \in [0,T]}, \widetilde{W}, \widetilde{\pi} \rp$ obtained in Theorem \ref{thm:skorohod}. Then $\widetilde{u}_\eps^N$ satisfies the following bounds that are independent of $N$ and $\epsilon$,
    \begin{equation} 
    	\E\bigg[\sup_{t \in [0,T]} |\Tu_\eps^N(t)|^p_{L^2} \bigg] + \eps p \E \int_0^T |\Tu_\eps^N(s)|_{L^2}^{p-2}|\partial_x^2 \Tu_\eps^N(s)|^2_{L^2}ds \leq  Ce^{CT} (\E|u_{0}|_{L^2}^p + 1).
    \end{equation}
 and,
   \begin{equation}
 	\E\biggl[ \sup_{t \in [0,T]} |\Tu_\eps^N(t)|_{H^2}^2 \biggr] + \eps \E \int_0^T |\Tu_\eps^N(s)|_{H^4}^2\,ds \leq Ce^{CT}\lp \E|u_0|_{L^2}^8 +  \E|\partial_x^2 u_0|_{L^2}^2 + 1  \rp.
 \end{equation}   
\end{corollary}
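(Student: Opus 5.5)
The plan is to transfer the moment bounds of Theorems \ref{ape:thm1} and \ref{ape:thm3} from $u_\eps^N$ to $\Tu_\eps^N$ through equality of laws, Theorem \ref{thm:skorohod}(i). The two estimates are expectations of functionals of a single process $u_\eps^N$, and $\Tu_\eps^N$ has the same law on $\mathcal{X}_u$. So it suffices to show that each functional appearing is a Borel measurable, nonnegative map on the path space, possibly $+\infty$-valued. Its expectation then depends only on the law of the process.

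\textbf{Step 1 (reduction to finitely many modes).} For fixed $N$, both $u_\eps^N$ and $\Tu_\eps^N$ take values in the finite-dimensional space $H_N$. For $\Tu_\eps^N$ this holds $\TP$-a.s.: the set $\{v \in \mathcal{X}_u : v(t) \in H_N \ \forall t\}$ is closed in $\mathcal{D}([0,T];H^{-2}(\T))$ and has full $\mu^N_\eps$-measure. On $H_N$ all Sobolev norms are continuous functions of the finitely many Fourier coefficients. Consequently the maps
\[
v \mapsto \sup_{t\in[0,T]}|v(t)|_{L^2}^p,\qquad v\mapsto \sup_{t\in[0,T]}|v(t)|_{H^2}^2,
\]
together with $v\mapsto \int_0^T |v(s)|_{L^2}^{p-2}|\partial_x^2 v(s)|_{L^2}^2\,ds$ and $v\mapsto \int_0^T |v(s)|_{H^4}^2\,ds$, are Borel on $\mathcal{D}([0,T];H^{-2}(\T))$ restricted to $H_N$-valued paths.

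\textbf{Step 2 (measurability of the sup).} For a càdlàg path the supremum over $[0,T]$ equals the supremum over the countable set $(\mathbb{Q}\cap[0,T])\cup\{T\}$ by right continuity. Each evaluation $v\mapsto v(t)$ is Borel on the Skorohod space. Hence the sup functionals are Borel, being countable suprema of continuous functions of evaluations. The time integrals are limits of Riemann sums, or equivalently Borel because $(t,v)\mapsto v(t)$ is jointly measurable. If one prefers to avoid the finite-dimensionality argument, an alternative is to write $|\cdot|_{H^2}$ as a supremum of $\langle \cdot ,\phi_k\rangle$ over a countable dense family of test functions. This gives lower semicontinuity in $H^{-2}$ and hence Borel measurability on all of $\mathcal{D}([0,T];H^{-2}(\T))$.

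\textbf{Step 3 (conclusion).} Apply the identity $\TE[F(\Tu_\eps^N)]=\E[F(u_\eps^N)]$ to each nonnegative Borel $F$ above. Then invoke Theorems \ref{ape:thm1} and \ref{ape:thm3}, whose constants are independent of $N$ and $\eps$. The right-hand sides involve only the law $\mu_0$ of $u_0$, which $\widetilde u_0$ shares by Theorem \ref{thm:skorohod}(i). The main point of care is Step 2, namely verifying Borel measurability of the sup and of the integral functionals in the Skorohod topology. Everything else is a direct consequence of equality in law.
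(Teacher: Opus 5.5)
Your proof is correct, but it takes a different route from the paper. The paper obtains the corollary from part (iv) of Theorem \ref{thm:skorohod}. Since $\Tu_\eps^N$ solves the Galerkin system \eqref{eqn: galerkin relabel} on the new basis $(\TOmega,\TF,\TP,(\TF^N_t),\widetilde W_N,\widetilde\pi_N)$, the It\^o-formula computations of Theorems \ref{ape:thm1} and \ref{ape:thm3} are simply rerun there. The initial data are controlled by $\TE|\Tu_0^N|^p_{L^2}=\E|P_Nu_0|^p_{L^2}\le\E|u_0|^p_{L^2}$, and similarly for $|\partial_x^2\cdot|_{L^2}$.

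You use only part (i), equality of laws, and reduce everything to Borel measurability of the path functionals on $\mathcal{D}([0,T];H^{-2}(\T))$. That verification is sound:
\begin{itemize}
\item The set of $H_N$-valued paths is closed under Skorohod convergence.
\item The supremum reduces to a countable dense set containing $T$ by right-continuity. This also holds in your lower-semicontinuous alternative, because $f(v(t))\le\liminf_{s\downarrow t}f(v(s))$ whenever $f$ is lower semicontinuous and $v$ is right-continuous.
\item The integral functionals are handled by joint measurability of $(t,v)\mapsto v(t)$ and Tonelli.
\end{itemize}

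Each route has its own cost. Yours does not depend on (iv), whose proof (constructing the filtration and identifying the stochastic integrals on the new space) is the genuinely nontrivial part of the Skorohod step. It is also insensitive to which noise and filtration the new process is paired with; the corollary's statement pairs it with $(\widetilde W,\widetilde\pi)$ rather than $(\widetilde W_N,\widetilde\pi_N)$, which is harmless for you. It even gives equality of the left-hand sides, not just the same upper bound. The paper's route needs no measurability discussion on Skorohod space, but it presupposes (iv) and a repetition of the a priori estimates on the new basis.
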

We now recall  Vitali's convergence lemma which we will use several times in the rest of this section to upgrade the almost sure convergence \eqref{as conv: galerkin} to convergence in $L^p$ spaces.
\begin{lemma}[Vitali's Lemma]\label{lem:vitali}
    Let $({\Omega}, {\F}, {\P})$ be a probability space and $X$ be a Banach space. For some $p \in [1, \infty)$ assume that we have a sequence of functions $f, f_1, f_2, \cdots \in L^p({\Omega}, {\F},  {\P}; X)$. If $\{f_n\}_{n=1}^\infty$ satisfies
    \begin{enumerate}[(i)]
        \item $|f_n - f|_X \to 0$ in probability as $n \to \infty$;
        \item $\sup_{n \geq 1} \E|f_n|_X^q < \infty$ for some $q \in (p, \infty)$,
    \end{enumerate}
    then $f_n \to f$ in $L^p({\Omega}, {\F},  {\P}; X)$.
\end{lemma}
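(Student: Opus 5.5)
The plan is the classical truncation-plus-dominated-convergence argument, using that $\P$ is a probability measure. Write $Y_n := |f_n - f|_X$. By hypothesis (ii) and lower semicontinuity, $f$ inherits the same uniform moment bound. Indeed, along a subsequence $f_{n_k}\to f$ almost surely (convergence in probability gives such a subsequence), so Fatou's lemma yields $\E|f|_X^q \le \liminf_k \E|f_{n_k}|_X^q \le M:=\sup_n \E|f_n|_X^q<\infty$. Consequently $\sup_n \E Y_n^q \le 2^{q-1}(M + \E|f|_X^q) \le 2^q M$.

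Next we fix $\lambda>0$ and split
\[
\E Y_n^p = \E\bigl[Y_n^p \mathbbm{1}_{\{Y_n\le \lambda\}}\bigr] + \E\bigl[Y_n^p \mathbbm{1}_{\{Y_n> \lambda\}}\bigr].
\]
For the tail term, on $\{Y_n>\lambda\}$ we have $Y_n^p \le \lambda^{p-q} Y_n^q$ since $q>p$. This gives the bound $\lambda^{p-q}\,2^qM$, uniformly in $n$, and it can be made smaller than any $\delta>0$ by choosing $\lambda$ large. For the truncated term, we split again according to whether $Y_n\le \eta$ or $\eta<Y_n\le\lambda$:
\[
\E\bigl[Y_n^p \mathbbm{1}_{\{Y_n\le \lambda\}}\bigr]\le \eta^p + \lambda^p\,\P(Y_n>\eta).
\]
By hypothesis (i), $\P(Y_n>\eta)\to0$, so $\limsup_n \E Y_n^p \le \eta^p + \delta$. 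Letting $\eta\to0$ and then $\delta\to 0$ gives $f_n\to f$ in $L^p(\Omega;X)$.

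The only point needing care is the integrability of the limit $f$ in $L^q$. The statement assumes only $f\in L^p$, so we derive the $L^q$ bound via the a.s.-convergent subsequence and Fatou's lemma, as above. The measurability of $|f_n-f|_X$ is automatic since these are strongly measurable $X$-valued functions. An alternative is to note that hypothesis (ii) makes $\{|f_n|_X^p\}$ uniformly integrable and to invoke the classical Vitali theorem on a probability space. I would present the direct $\lambda$-truncation instead, since it is self-contained.
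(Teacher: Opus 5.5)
Your proof is correct. The paper gives no proof of this lemma: it is quoted as the classical Vitali convergence theorem and then applied (for instance in Lemma~\ref{thm:convergence}). The implicit argument there is the one you list as an alternative. The bound in (ii) with $q>p$ makes $\{|f_n|_X^p\}$ uniformly integrable by the de la Vall\'ee-Poussin criterion. Since $f\in L^p$ is assumed, $\{|f_n-f|_X^p\}$ is then uniformly integrable as well, and uniform integrability together with convergence in probability gives $L^p$ convergence.

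Your $\lambda$--$\eta$ truncation is that same argument unrolled into a self-contained estimate. It has one small bonus: the Fatou step shows $f\in L^q$ automatically, so the hypothesis $f\in L^p$ in the statement is redundant. In the uniform-integrability route the Fatou step is not needed, because integrability of $|f|_X^p$ is given. Your version trades that hypothesis for the uniform bound $\sup_n \E Y_n^q\le 2^q M$, which disposes of the tail term in one line.
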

First, we will summarize the immediate consequences of Corollary \ref{cor:skorohod estimates} and Lemma \ref{lem:vitali} in the following lemma.
\begin{lemma} \label{thm:convergence}
The limiting stochastic process $\Tu_{\eps} $ obtained in Theorem \ref{thm:skorohod} belongs to the space $ L^8(\TOmega; L^\infty(0,T; L^2(\T)))  \cap L^2(\TOmega; L^2(0,T; H^4(\T)))$, 
    and satisfies the following convergence results:
    \begin{enumerate}[(i)]
        \label{strong conv: galerkin} \item $\Tu_\eps^N \to \Tu_\eps$ strongly in the space $L^1(\TOmega; L^2(0,T; H^1(\T)) ) \cap L^2(\TOmega; L^2(0,T; L^2(\T)))$  as $N\to\infty$ along $\Lambda$, 
        \item $\Tu_\eps^N \rightharpoonup \Tu_\eps$ weakly in the space $L^2(\TOmega; L^2(0,T; H^4(\T)))$ as $N\to\infty$ along $\Lambda$,
        \item $\Tu_\eps^N \stackrel{*}{\rightharpoonup} \Tu_\eps$ in weak* in the space $L^8(\TOmega; L^\infty(0,T; L^2(\T)))$ as $N\to\infty$ along $\Lambda$.
    \end{enumerate}    
    \end{lemma}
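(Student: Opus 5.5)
The plan combines the uniform bounds of Corollary \ref{cor:skorohod estimates} with the almost sure convergence \eqref{as conv: galerkin} and Vitali's Lemma \ref{lem:vitali}, and identifies every weak limit with $\Tu_\eps$ through the strong convergence.

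\textbf{Step 1 (regularity of $\Tu_\eps$ and weak limits).} By Corollary \ref{cor:skorohod estimates} with $p=8$, the family $(\Tu_\eps^N)_{N\in\Lambda}$ is bounded in $L^8(\TOmega;L^\infty(0,T;L^2(\T)))$. This space is the dual of the separable space $L^{8/7}(\TOmega;L^1(0,T;L^2(\T)))$, so Banach--Alaoglu yields a weak-$*$ convergent subsequence. For fixed $\eps>0$, the second estimate in Corollary \ref{cor:skorohod estimates} bounds $\E\int_0^T|\Tu_\eps^N|_{H^4}^2\,ds$ by $C/\eps$, which gives a further weakly convergent subsequence in the Hilbert space $L^2(\TOmega;L^2(0,T;H^4(\T)))$. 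Step 2 identifies both limits with $\Tu_\eps$. Hence $\Tu_\eps$ lies in both spaces, and, since the limit is unique, the whole sequence along $\Lambda$ converges, which gives (ii) and (iii). Weak-$*$ lower semicontinuity of the norm also transfers the bounds to $\Tu_\eps$.

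\textbf{Step 2 (strong convergence).} By Theorem \ref{thm:skorohod}(ii), $\Tu_\eps^N\to\Tu_\eps$ $\TP$-a.s. in $\mathcal{X}_u$, and in particular in $L^2(0,T;H^1(\T))$. Almost sure convergence implies convergence in probability, which is condition (i) of Lemma \ref{lem:vitali} with $X=L^2(0,T;H^1(\T))$.

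For condition (ii) with $p=1$, $q=2$, note that $\|v\|_{L^2(0,T;H^1)}\le C\sqrt T\sup_t|v(t)|_{H^2}$. The second bound of Corollary \ref{cor:skorohod estimates} then gives $\sup_N\E\|\Tu_\eps^N\|^2_{L^2H^1}<\infty$, uniformly in $\eps$. Vitali's Lemma therefore gives convergence in $L^1(\TOmega;L^2(0,T;H^1))$.

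For $L^2(\TOmega;L^2(0,T;L^2))$, I take $p=2$ and need a uniform moment with $q>2$. Since $\|v\|_{L^2L^2}\le\sqrt T\sup_t|v|_{L^2}$, the $L^8L^\infty L^2$ bound gives $q=8$. Convergence in probability in $L^2(0,T;L^2)$ follows from the almost sure convergence in the stronger $L^2(0,T;H^1)$ topology. These two applications of Vitali's Lemma give (i).

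Strong convergence in $L^2(\TOmega\times(0,T)\times\T)$ implies distributional convergence. Since weak and weak-$*$ limits are distributional limits, they must coincide with $\Tu_\eps$, which closes Step 1.

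\textbf{Main obstacle.} The main subtlety is matching exponents in Vitali's Lemma. The $H^2$-level bound is only second moment, so I can only obtain $L^1$ convergence in $\omega$ at the $H^1$ level. The higher $L^8$ integrability is available only at the $L^2$ level, which is why the statement mixes $L^1$ and $L^2$ in $\omega$.

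A further care point is measurability: I need $\Tu_\eps^N$ and $\Tu_\eps$ to be random variables in $L^2(0,T;H^1)$. This holds because they are $\mathcal{X}_u$-valued, and the embedding of $\mathcal{X}_u$ into $L^2(0,T;H^1)$ is continuous.
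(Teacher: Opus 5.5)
Your proof is correct and follows the same route as the paper. For (i), you apply Vitali's lemma to the $\TP$-a.s.\ convergence in $L^2(0,T;H^1(\T))$, using the uniform $H^2$ second moment for the $L^1$-in-$\omega$ statement and the $L^8$ bound for the $L^2$-in-$\omega$ statement; for (ii)--(iii), you use Banach--Alaoglu. Two points the paper's proof leaves implicit are made explicit in your argument: you identify the weak and weak-$*$ limits with $\Tu_\eps$ through the strong convergence in $L^2(\TOmega\times(0,T)\times\T)$, and you note that the $H^4$ bound is only $O(1/\eps)$.
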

    \begin{proof}
        \begin{enumerate}[(i)]
            \item Due to Theorem \ref{thm:skorohod}, we know that $\Tu_\eps^N \to \Tu_\eps$ in the space $L^2(0,T;H^1(\T))$, $\widetilde{\P}$-a.s. This implies that $\Tu_\eps^N \to \Tu_\eps$ in  $L^2(0,T;H^1(\T))$ and $L^2(0,T;L^2(\T))$ $\TP$-a.s. (and thus in probability). Moreover, we have that 
            \[
                \begin{aligned}
                \sup_{N \in \Lambda} \TE \int_0^T |\Tu_\eps^N(s)|_{H^1}^2 ds& \leq \sup_{N \in \Lambda} \lp \TE \sup_{t \in [0,T]} |\Tu_\eps^N(t)|_{H^1}^2 \rp < \infty, \\
                \sup_{N \in \Lambda} \TE \int_0^T |\Tu_\eps^N(s)|_{L^2}^8ds & \leq \sup_{N \in \Lambda} \lp \TE \sup_{t \in [0,T]} |\Tu_\eps^N(t)|_{L^2}^8 \rp < \infty.
                \end{aligned}
            \]
            By using Lemma \ref{lem:vitali}, we infer that the convergence holds in $L^1(\TOmega; L^2(0,T; H^1(\T)) ) \cap L^2(\TOmega; L^2(0,T;L^2(\T)))$. 
            \item According to Corollary \ref{cor:skorohod estimates} and Theorem \ref{ape:thm3}, $\lp \Tu_\eps^N \rp_{N\in \Lambda}$ is bounded in $L^2(\TOmega; L^2(0,T;H^4(\T)))$. By Alaoglu's theorem, every subsequence further admits a subsequence that converges weakly in $L^2(\TOmega; L^2(0,T;H^4(\T)))$. 
            \item According to Corollary \ref{cor:skorohod estimates} and Theorem \ref{ape:thm1}, we have that $\lp \Tu_\eps^N \rp_{N\in \Lambda}$ is bounded in $L^8(\TOmega; L^\infty(0,T;L^2(\T)))$. 
             Hence, it follows that $\Tu_\eps^N \stackrel{*}{\rightharpoonup} \Tu_\eps$ weakly in $L^8(\TOmega; L^\infty(0,T;L^2(\T)))$ as $N\to \infty$ along $\Lambda$.
        \end{enumerate}
    \end{proof}
Next, we will gather convergence results for the stochastic integrals beginning with that of the Wiener noise. The proofs of the following result is identical to the proof of Lemma 5.6 - Corollary 5.11 presented in \cite{CTT18euler} (see also {Lemma 4.14, 4.15} in \cite{NTT21}) in conjunction with Lemma 2.1 in \cite{DGT11}.
\begin{lemma}\label{cor:L^1 convergence of G}
	The noise terms on the right hand side of \eqref{eqn: regularized} satisfy the following convergences:  
\begin{enumerate} 
\item	    $P_N G(\Tu_\eps^N) \to G(\Tu_\eps)$ as $N \to \infty$ along $\Lambda$, $\TP$-a.s  in the space $L^2(0, T; HS(U_0, H^1(\T)))$.

\item The processes $ \lp\int_0^t P_N G(\Tu_\eps^N (s-))d\tilde W_N(s)\rp_{t \in [0,T]}$ converge in probability as $L^2(0,T; H^1(\T))$-valued random variables to $\lp \int_0^t G(\widetilde{u}_\eps(s-)) dW(s)\rp_{t \in [0,T]}$ as $N \to \infty$. 

\item The processes $ \lp\int_0^t P_N G(\Tu_\eps^N (s-))d\tilde W_N(s)\rp_{t \in [0,T]}$ converge in $L^1(\TOmega; L^1(0,T;H^1(\T)))$ 
to the process $\lp \int_0^t G(\widetilde{u}_\eps(s-)) dW(s)\rp_{t \in [0,T]}$ as $N \to \infty$.
\end{enumerate}
\end{lemma}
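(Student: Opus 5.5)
The proof follows the standard route of Debussche--Glatt-Holtz--Temam (Lemma 2.1 in \cite{DGT11}), applied in $H^1(\T)$ in place of $L^2(\T)$. Part (1) supplies the convergence of the integrands; part (2) then follows from the DGT lemma; part (3) upgrades convergence in probability to $L^1$ convergence via Vitali's Lemma \ref{lem:vitali}.

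For (1), we split
\[
P_N G(\Tu_\eps^N) - G(\Tu_\eps) = P_N\bigl(G(\Tu_\eps^N) - G(\Tu_\eps)\bigr) + (P_N - I)G(\Tu_\eps).
\]
For the first piece we use that $P_N$ is a contraction on $H^1$ together with the Lipschitz condition \eqref{assumption: lipschitz} with $s=1$. This gives
\[
\int_0^T \|P_N(G(\Tu_\eps^N)-G(\Tu_\eps))\|_{HS(U_0,H^1)}^2\,ds \le C\int_0^T|\Tu_\eps^N-\Tu_\eps|_{H^1}^2\,ds ,
\]
which tends to $0$ $\TP$-a.s. because $\Tu_\eps^N\to\Tu_\eps$ in $L^2(0,T;H^1(\T))$ a.s. by Theorem \ref{thm:skorohod}. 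For the second piece, for a.e.\ $s$ we have $\|(P_N-I)G(\Tu_\eps(s))\|_{HS(U_0,H^1)}^2=\sum_n|(P_N-I)G(\Tu_\eps(s))e_n|_{H^1}^2\to0$. This follows by dominated convergence in $n$, since each summand is bounded by $|G(\Tu_\eps(s))e_n|_{H^1}^2$, which is summable. We then integrate in time with the dominating function $\|G(\Tu_\eps)\|^2_{HS(U_0,H^1)}\le \kappa_1(1+|\Tu_\eps|_{H^1}^2)$ from \eqref{assumption: growth of G}. This bound is integrable on $[0,T]$ a.s., because $\Tu_\eps\in L^2(0,T;H^1)$ by Lemma \ref{thm:convergence}.

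For (2), we invoke \cite[Lemma 2.1]{DGT11} with Hilbert space $H^1(\T)$. Its hypotheses are twofold. First, $\widetilde W_N\to\widetilde W$ in probability in $C([0,T];U_0)$, which holds since the convergence is a.s.\ by \eqref{as conv: galerkin}. Second, $P_NG(\Tu_\eps^N)\to G(\Tu_\eps)$ in probability in $L^2(0,T;HS(U_0,H^1))$, which is part (1). The lemma yields convergence in probability of the stochastic integrals in $L^2(0,T;H^1(\T))$. A technical point is that the integrands must be predictable with respect to a common filtration. We handle this by taking the filtration generated jointly by all of $(\Tu_\eps^N,\widetilde W_N,\widetilde\pi_N)_{N}$ and the limits, as in \cite{CTT18euler}. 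The left limits $s-$ are harmless, since càdlàg paths have countably many jumps and so the integrands agree $ds$-a.e.

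For (3), we apply Vitali with $p=1$, $q=2$, $X=L^1(0,T;H^1)$. Convergence in probability in $L^2(0,T;H^1)$ implies convergence in probability in $L^1(0,T;H^1)$. For uniform integrability we use the Itô isometry and \eqref{assumption: growth of G}:
\[
\TE\Bigl\|\int_0^\cdot P_NG(\Tu_\eps^N)\,d\widetilde W_N\Bigr\|^2_{L^2(0,T;H^1)} \le T\,\TE\int_0^T\kappa_1(1+|\Tu_\eps^N|_{H^1}^2)\,ds\le C,
\]
where the last bound is uniform in $N$ by Corollary \ref{cor:skorohod estimates}. I expect the main obstacle to be the filtration/adaptedness issue in step (2); the remaining estimates are routine.
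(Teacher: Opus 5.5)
Your route is the one the paper intends. The paper gives no argument of its own; it refers to \cite[Lemma 5.6--Cor.~5.11]{CTT18euler} together with \cite[Lemma 2.1]{DGT11}. That is exactly your scheme: a.s.\ convergence of the integrands via the Lipschitz bound and $P_N\to I$, then the Debussche--Glatt-Holtz--Temam lemma in $H^1(\T)$, then Vitali with the It\^o isometry bound. Parts (1) and (3) are fine as written.

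The step to correct is your handling of what you call the main obstacle in (2). \cite[Lemma 2.1]{DGT11} does not need a common filtration. It is formulated for a sequence of stochastic bases $(\TOmega,\TF,(\TF^N_t)_{t},\TP,\widetilde W_N)$, with each integrand predictable for its own filtration and $\widetilde W_N$ a Wiener process for that filtration, plus a separate basis for the limit. This is exactly what Theorem \ref{thm:skorohod} provides, with $\TF^N_t$ built separately for each $N$ in \eqref{filtration}, as in \cite{CTT18euler}.

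Your remedy, the filtration generated jointly by all $(\Tu_\eps^N,\widetilde W_N,\widetilde\pi_N)_N$ and the limit, would in fact break the argument. The Skorohod representation couples different indices on $[0,1)$ in an uncontrolled way. Nothing guarantees that the increments of $\widetilde W_N$ after time $t$ are independent of $\Tu_\eps^M(s)$, $\widetilde W_M(s)$ for $s\le t$ and $M\neq N$. Thus $\widetilde W_N$ need not be a Wiener process with respect to the enlarged filtration. Consequently, both the It\^o integral $\int P_NG(\Tu^N_\eps)\,d\widetilde W_N$ and the identification of $\Tu^N_\eps$ as a Galerkin solution in Theorem \ref{thm:skorohod}(iv) are only available for $\TF^N_t$.

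Drop the joint filtration and apply the lemma with the individual filtrations $\TF^N_t$ and $\TF^\eps_t$. With that change the proof goes through.
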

Similarly, we show the following convergence results for the jump integrals. The proof of the following Lemma is given in Appendix \ref{section:appendix}.
\begin{lemma}\label{cor:L^1 convergence of K}
	The noise terms on the right hand side of \eqref{eqn: regularized} satisfy the following convergences:  
\begin{enumerate}
\item $ P_N K(\Tu_\eps^N(s-), \xi) \to K(\Tu_\eps(s-), \xi)$ as $N \to \infty$ along $\Lambda$, $\TP$-a.s in the space $ L^2([0, T] \times E_0, dt \otimes d\nu; H^1(\T))$.

\item The processes $ \left(\int_{(0,t]} \int_{E_0} P_N K(\widetilde{u}_\eps^N(s-),\xi) d\widetilde{\widehat{\pi}}_N(s,\xi)\right)_{t \in [0,T]} $ converge in probability, as random variables with values in $ L^2(0, T; H^1(\T)) $, to the process $ \left(\int_{(0,t]} \int_{E_0} K(\widetilde{u}(s-),\xi) d\widetilde{\pi}(s,\xi)\right)_{t \in [0,T]} $.

\item The processes $ \left(\int_{(0,t]} \int_{E_0} P_N K(\widetilde{u}_\eps^N(s-),\xi) d\widetilde{\widehat{\pi}}_N(s,\xi)\right)_{t \in [0,T]} $ converge in $ L^1(\widetilde{\Omega}; L^1(0, T; H^1(\T))) $ to the process $ \left(\int_{(0,t]} \int_{E_0} K(\widetilde{u}(s-),\xi) d\widetilde{\widehat{\pi}}(s,\xi)\right)_{t \in [0,T]} $. 
\end{enumerate}
\end{lemma}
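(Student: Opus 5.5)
For part (1) we argue pathwise, splitting
\[
P_N K(\Tu_\eps^N,\xi) - K(\Tu_\eps,\xi) = P_N\bigl(K(\Tu_\eps^N,\xi)-K(\Tu_\eps,\xi)\bigr) + (P_N - I)K(\Tu_\eps,\xi).
\]
For the first piece, $P_N$ is a contraction on $H^1(\T)$, so the Lipschitz condition \eqref{assumption: lipschitz} with $s=1$ gives
\[
\int_0^T\!\!\int_{E_0} \bigl|P_N\bigl(K(\Tu_\eps^N,\xi)-K(\Tu_\eps,\xi)\bigr)\bigr|_{H^1}^2\, d\nu\, ds \le C\|\Tu_\eps^N-\Tu_\eps\|^2_{L^2(0,T;H^1)} .
\]
This tends to $0$ $\TP$-a.s. by Theorem \ref{thm:skorohod}. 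The left limits $s-$ may be replaced by $s$ because a c\`adl\`ag path has only countably many jumps, a $dt$-null set.

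For the second piece, $|(P_N-I)K(\Tu_\eps,\xi)|_{H^1}\to 0$ for every $(s,\xi)$. It is dominated by $|K(\Tu_\eps,\xi)|_{H^1}^2$, which is $dt\otimes d\nu$-integrable a.s. by \eqref{assumption: growth of K} with $s=1$, because $\Tu_\eps\in L^2(0,T;H^1)$ a.s. Dominated convergence then finishes part (1).

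For part (2) the difficulty is that both the integrand and the compensated measure $\widetilde{\widehat\pi}_N$ depend on $N$. We will not show that $\widetilde\pi$ is a Poisson random measure with compensator $dt\otimes d\nu$ relative to the limiting filtration $(\TF^\eps_t)$, and that $\Tu_\eps$ is adapted. This follows from the a.s. weak-$\#$ convergence $\widetilde\pi_N\to\widetilde\pi$ together with independence of increments, passed to the limit through finite-dimensional distributions; it is the standard argument of \cite{CTT18euler}. We will then invoke the Poisson analogue of \cite[Lemma 2.1]{DGT11}.

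That lemma is proved by approximating the integrands with step functions in time and cutting $E_0$ down to bounded sets $E_0^k$ of finite $\nu$-measure. On each $E_0^k$ the integral against the measure is a finite sum over atoms. Weak-$\#$ convergence of the counting measures then gives a.s. convergence of these finite sums, except at the countably many fixed times that we avoid when choosing the step partition. The error from both approximations is controlled uniformly in $N$ in $L^2(\TOmega;L^2(0,T;H^1))$ by the It\^o isometry, the growth bound \eqref{assumption: growth of K}, and the uniform estimates of Corollary \ref{cor:skorohod estimates}. The tail $\int_{E_0\setminus E_0^k}$ is small because the integrands converge in $L^2(dt\otimes d\nu)$ by part (1).

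The main obstacle is this step: transferring the convergence of the random measures to the compensated stochastic integrals with $N$-dependent integrands. The key point is that part (1) supplies convergence in probability of the integrands in $L^2([0,T]\times E_0;H^1)$, which is exactly the hypothesis needed. The limit in (2) is the compensated integral against $d\widetilde{\widehat\pi}$.

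For part (3), convergence in probability from part (2) is upgraded by Lemma \ref{lem:vitali} with $p=1$, $q=2$. By the It\^o isometry, \eqref{assumption: growth of K} with $s=1$, and Corollary \ref{cor:skorohod estimates},
\[
\TE\Bigl\|\int_0^\cdot\!\!\int_{E_0} P_N K(\Tu_\eps^N(s-),\xi)\, d\widetilde{\widehat\pi}_N\Bigr\|_{L^2(0,T;H^1)}^2 \le CT\,\TE\int_0^T\bigl(1+|\Tu_\eps^N|_{H^1}^2\bigr)\,ds \le C,
\]
uniformly in $N$. Convergence in $L^1(\TOmega;L^2(0,T;H^1))$, and hence in $L^1(\TOmega;L^1(0,T;H^1))$, follows.
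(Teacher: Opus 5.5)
Your arguments for (1) and (3) are fine. The $H^1$ case of \eqref{assumption: lipschitz}, together with dominated convergence for $(P_N-I)K(\Tu_\eps,\xi)$, gives (1), and Vitali's lemma with the It\^o-isometry bound gives (3).

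The gap is in (2), at the sentence claiming that weak-$\#$ convergence gives a.s.\ convergence of the finite sums over atoms. Weak-$\#$ convergence $\widetilde\pi_N\to\widetilde\pi$ only yields $\int f\,d\widetilde\pi_N\to\int f\,d\widetilde\pi$ for $f$ bounded, continuous and boundedly supported. Equivalently, the atoms $(s_i^N,\xi_i^N)$ of $\widetilde\pi_N$ in a bounded set converge to those of $\widetilde\pi$. Your sums are $\sum_i P_NK(\Tu_\eps^N(t_j),\xi_i^N)$, and passing to the limit in them term by term would need $(v,\xi)\mapsto K(v,\xi)$ to be jointly continuous at the limiting atoms. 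Nothing of the kind is assumed:
\begin{itemize}
\item $K$ is merely measurable in $\xi$;
\item \eqref{assumption: lipschitz} controls $v\mapsto K(v,\cdot)$ only in $L^2(E_0,d\nu)$, not pointwise in $\xi$;
\item the cut-off $\mathbbm{1}_{E_0^k}$ is itself discontinuous.
\end{itemize}
This is exactly where the jump case departs from \cite[Lemma 2.1]{DGT11}. There the noise enters linearly through the increments $W(t_{j+1})-W(t_j)$, whereas here the integrand is evaluated at the random marks. Separately, the time-discretization error is not made small uniformly in $N$ by \eqref{assumption: growth of K} and Corollary \ref{cor:skorohod estimates} alone. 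You would also need equicontinuity in time of $\Tu_\eps^N$ in $H^1$, e.g.\ from Theorem \ref{ape:thm_fractional} plus interpolation.

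What is missing is an approximation in $\xi$ by continuous, boundedly supported functions, with the error controlled uniformly in $N$ via the It\^o isometry, using that every $\widetilde\pi_N$ has the same intensity $dt\otimes d\nu$. This is what the paper does. It writes $I^{\widehat\pi_N}(K_N)-I^{\widehat\pi}(K)$ as $I^{\widehat\pi_N}(K_N-K)+I^{\widehat\pi_N}(K-F)+\bigl(I^{\widehat\pi_N}(F)-I^{\widehat\pi}(F)\bigr)+I^{\widehat\pi}(F-K)$.
\begin{itemize}
\item The first term is handled by the Lenglart-type estimate of Lemma \ref{lemma:appendix 1}, which needs only the a.s.\ $L^2(dt\otimes d\nu)$ convergence your part (1) supplies.
\item The second and fourth terms are small uniformly in $N$ by Chebyshev and the It\^o isometry.
\item For the third, $F=\sum_j\mathbbm{1}_{D_j}h_{s_j,r_j}(t)f_{B_j}(\xi)v_j$ from Lemma \ref{lemma:appendix 2} has $h_{s_j,r_j}$ and $f_{B_j}$ continuous. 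The $f_{B_j}$ are built from closed and open approximations of $B_j$ via regularity of $\nu$, so the weak-$\#$ definition applies directly.
\end{itemize}

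You can instead keep your discretization. It has the merit of keeping the approximants $(\TF^N_t)$-predictable, which the paper's term $I^{\widehat\pi_N}(K-F)$ tacitly requires of the limit integrand. In that case, replace $K(\Tu_\eps^N(t_j),\xi)$ by something like $\sum_i\phi_i(\Tu_\eps^N(t_j))k_i(\xi)$, with $\phi_i$ continuous on $H^1$ and $k_i$ continuous with bounded support. This is possible on compact subsets of $H^1$ because $v\mapsto K(v,\cdot)$ is Lipschitz into $L^2(E_0,d\nu;H^1)$. The coefficients $\phi_i(\Tu_\eps^N(t_j))$ then converge as scalars at continuity times $t_j$, and only the fixed continuous $k_i$ are integrated against $\widetilde\pi_N$.
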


With the convergence results Theorem \ref{thm:skorohod}, Lemma \ref{thm:convergence} and Lemmas \ref{cor:L^1 convergence of G}-\ref{cor:L^1 convergence of K}, we are now in position to pass $N\to\infty$ in the approximation system \eqref{eqn: galerkin relabel}. This brings us to the following existence result for the regularized system \eqref{eqn: regularized}.
\begin{theorem} \label{thm:solution to galerkin}
The stochastic basis $\lp \widetilde{\Omega}, \widetilde{\F}, \widetilde{\P}, ({\TF^\eps_t})_{t \geq 0}, \widetilde{W}, \widetilde{\pi} \rp$  and the limiting random variable $\Tu_\eps$ obtained in Theorem \ref{thm:skorohod} is a global martingale solution to \eqref{eqn: regularized} satisfying $\Tu_\eps(0) = \Tu_{0}$, where $\Tu_{0}$ has law $\mu_{0}$.
\end{theorem}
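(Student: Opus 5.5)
The approach is to pass to the limit $N\to\infty$ along $\Lambda$ term by term in the weak (distributional) formulation of the Galerkin scheme \eqref{eqn: galerkin relabel}, which $\Tu_\eps^N$ satisfies on the new stochastic basis by Theorem \ref{thm:skorohod}(iv).

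First I would dispose of the stochastic-basis issues. The filtration $(\TF^\eps_t)$ is built as in \eqref{filtration} from $\Tu_\eps$, $\widetilde W$ and $\widetilde\pi$. Since the a.s. limits of $\widetilde W_N$ and $\widetilde\pi_N$ have the same laws as $W$ and $\pi$, $\widetilde W$ is a $U_0$-Wiener process and $\widetilde\pi$ is a Poisson random measure with intensity $dt\otimes d\nu$. I would check that both are adapted to $(\TF^\eps_t)$ and that their future increments are independent of $\TF^\eps_t$. The standard way is to pass to the limit in identities of the form $\TE[\phi(\text{past})\,(\widetilde W_N(t)-\widetilde W_N(s))]=0$, together with the corresponding exponential or characteristic-functional identities for $\widetilde\pi_N$, using a.s. convergence and uniform integrability. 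Adaptedness and c\`adl\`ag paths of $\Tu_\eps$ in $H^{-2}$ follow from convergence in $\mathcal{D}([0,T];H^{-2}(\T))$. The regularity $\Tu_\eps\in L^8(\TOmega;L^\infty L^2)\cap L^2(\TOmega;L^\infty H^2)$ follows from Corollary \ref{cor:skorohod estimates} by weak-$*$ lower semicontinuity, as in Lemma \ref{thm:convergence}. Finally, $\Tu_\eps(0)=\Tu_0$ with law $\mu_0$, because $\Tu^N_0=P_N\Tu_0^{(N)}\to\Tu_0$ a.s.

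Next I would fix a test function $\varphi\in C^\infty(\T)$, a set $A\in\TF$ and a time-integration, and pass to the limit in $\TE\big[\mathbbm{1}_A\int_0^T(\Tu^N_\eps(t),\varphi)\,dt\big]$ and the corresponding drift and noise terms. The linear terms $\partial_x^3$, $\gamma u$ and $\eps\partial_x^4$ converge by Lemma \ref{thm:convergence}(i)--(ii), after moving all derivatives onto $\varphi$. The noise terms converge in $L^1(\TOmega;L^1(0,T;H^1))$ by Lemmas \ref{cor:L^1 convergence of G} and \ref{cor:L^1 convergence of K}. Together with $P_N\varphi\to\varphi$, this yields the limit identity for a.e. $t$, and then for every $t$ by right-continuity of both sides in $H^{-2}$.

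The main obstacle is the truncated nonlinearity $\theta_N(|\Tu^N_\eps|_{H^1})P_N(\Tu^N_\eps\partial_x\Tu^N_\eps)$. I would split it as
\[
\bigl(\theta_N(|\Tu^N_\eps|_{H^1})-1\bigr)\Tu^N_\eps\partial_x\Tu^N_\eps+\bigl(\Tu^N_\eps\partial_x\Tu^N_\eps-\Tu_\eps\partial_x\Tu_\eps\bigr),
\]
after moving $P_N$ onto $\varphi$. The second difference is bounded in $L^1(\TOmega\times(0,T))$ by
\[
|\Tu^N_\eps-\Tu_\eps|_{L^2}|\partial_x\Tu^N_\eps|_{L^2}|\varphi|_{L^\infty}+|\Tu_\eps|_{L^\infty}|\partial_x(\Tu^N_\eps-\Tu_\eps)|_{L^2}|\varphi|_{L^2}.
\]
This tends to zero by the strong $L^2(0,T;H^1)$ convergence and the uniform $H^2$ bounds, with Vitali's Lemma \ref{lem:vitali} upgrading from a.s. convergence. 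For the cutoff term, $1-\theta_N(x)\le\mathbbm{1}_{\{x\ge N/2\}}$, and Chebyshev's inequality together with $\TE\sup_t|\Tu^N_\eps|_{H^1}^2\le C$ shows that the set where the cutoff is active has measure $O(N^{-2})$. A Cauchy--Schwarz estimate against the uniform $L^2_\P L^\infty_tH^2_x$ bound would then kill this term.

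This cutoff estimate is the delicate point: the integrand is quadratic in a quantity bounded only in $L^2_\P$, so the H\"older exponents are tight. If direct Cauchy--Schwarz is insufficient, I would first try interpolating with the $L^8_\P L^\infty_tL^2_x$ bound. If that also fails, I would instead argue pathwise: for a.e. $\omega$, $\sup_t|\Tu^N_\eps(t)|_{H^1}$ is eventually bounded along the subsequence by the a.s. convergence in $\mathcal{D}([0,T];H^2_w)$, so $\theta_N=1$ for large $N$. Dominated convergence would then close the argument.
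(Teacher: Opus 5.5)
Your outline matches the paper's: term-by-term passage to the limit in the weak form of \eqref{eqn: galerkin relabel} using Lemma \ref{thm:convergence} and Lemmas \ref{cor:L^1 convergence of G}--\ref{cor:L^1 convergence of K}, then extension to every $t$ by right-continuity. There is one omission, and you treat the cutoff differently from the paper.

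\textbf{The omission.} In \eqref{eqn: galerkin relabel} the cutoff also multiplies the dispersive term. So $\theta_N(|\Tu^N_\eps|_{H^1})\partial_x^3\Tu^N_\eps$ is not linear, and Lemma \ref{thm:convergence} alone does not pass it to the limit; the paper handles it separately as $\beta_N$. Your own cutoff estimate repairs this at once, since
\[
\bigl|(\theta_N(|\Tu^N_\eps|_{H^1})-1)(\Tu^N_\eps,\partial_x^3\varphi)_{L^2}\bigr|\le \mathbf{1}_{\{|\Tu^N_\eps|_{H^1}\ge N/2\}}|\Tu^N_\eps|_{L^2}|\varphi|_{H^3}.
\]
By Chebyshev and Cauchy--Schwarz this is $O(N^{-1})$ in $L^1(\TOmega\times(0,T))$.

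\textbf{The nonlinear cutoff term.} You are right that plain Cauchy--Schwarz with only the $L^2_\P L^\infty_t H^2_x$ bound fails, because it would need a fourth moment of $|\Tu^N_\eps|_{H^1}$. Your interpolation fallback does close it. Since $|\partial_x v|_{L^2}^2\le |v|_{L^2}|\partial_x^2 v|_{L^2}$ on $\T$,
\[
\TE\int_0^T |\Tu^N_\eps|_{L^2}^2|\partial_x\Tu^N_\eps|_{L^2}^2\,dt\le T\Bigl(\TE\sup_{t}|\Tu^N_\eps(t)|_{L^2}^6\Bigr)^{1/2}\Bigl(\TE\sup_{t}|\partial_x^2\Tu^N_\eps(t)|_{L^2}^2\Bigr)^{1/2}\le C .
\]
Cauchy--Schwarz against the $O(N^{-2})$ measure of the active set then gives an $O(N^{-1})$ rate.

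\textbf{How the paper does it instead.} The paper pairs the cutoff defect with the limit nonlinearity, $(\theta_N(|\Tu^N_\eps|_{H^1})-1)(\phi,\Tu_\eps\partial_x\Tu_\eps)_{L^2}$, and argues pathwise.
\begin{itemize}
\item Since $\sup_t|\Tu_\eps(t)|_{H^2}<\infty$ a.s., one has $\theta_N(|\Tu_\eps|_{H^1})\equiv 1$ for $N\ge N_0(\omega)$.
\item The $N$-uniform Lipschitz bound \eqref{truncation lipschitz} and a.s. convergence in $L^2(0,T;H^1(\T))$ then make the defect vanish.
\item Dominated convergence applies with the $N$-independent majorant $C|\phi|_{L^\infty}\sup_t|\Tu_\eps(t)|_{H^1}^2$.
\end{itemize}
That route needs no higher moments. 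Yours gives an explicit rate but relies on the $p=6$ case of Theorem \ref{ape:thm1}.

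\textbf{Your last-resort pathwise variant.} It does not close by dominated convergence as you state. With your split, the natural majorant $|\varphi|_{L^\infty}\int_0^T|\Tu^N_\eps|_{H^1}^2\,dt$ depends on $N$. The quantity $\sup_N\sup_t|\Tu^N_\eps(t)|_{H^2}$ is indeed a.s.\ finite (by Banach--Steinhaus applied to the convergence in $\mathcal{D}([0,T];H^2_w(\T))$), but it need not be integrable. You would instead need Vitali with the interpolated bound above, or identify the limit pathwise without taking expectations.
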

    \begin{proof}
The aim of this proof is to show that the process $\Tu_{\eps} $ obtained after applying the Skorohod Representation theorem in Theorem \ref{thm:skorohod} which belongs to the space $ L^8(\TOmega; L^\infty(0,T; L^2(\T)))  \cap L^2(\TOmega; L^2(0,T; H^4(\T)))$ and takes values in $\Tu_\eps \in \D([0,T]; H^{-2}(\T))$, $\TP$-a.s., solves \eqref{eqn: regularized}. For that purpose we will pass $N\to\infty$ in \eqref{eqn: galerkin relabel} by using the convergence obtained in Lemma \ref{thm:convergence}. First, recall that Lemmas \ref{cor:L^1 convergence of G}- \ref{cor:L^1 convergence of K} provides us with the desired convergence of the stochastic integrals in the right side of \eqref{eqn: galerkin relabel}. Hence, in the rest of this proof we will briefly explain the passage of $N\to\infty$ in the left-hand side terms of \eqref{eqn: galerkin relabel}.
        
        Fix a vector $\phi \in H^3(\T)$ and a set $\Gamma \in \TF \otimes \mathcal{B}([0,T])$. Since, due to Theorem \ref{thm:skorohod}, we know that $\Tu_\eps^N \to \Tu_\eps$ in $L^1(\TOmega; L^1(0,T;H^1(\T)))$ as $N \to \infty$ along $\Lambda$, we readily find for the linear terms that
            \begin{align}
            \label {passage to limit: u} \TE \int_0^T \bigg|\chi_\Gamma (\phi, \Tu_\eps^N(s) - \Tu_\eps(s))_{L^2} ds \bigg| & \leq CT |\phi|_{H^3} \TE \int_0^T |\Tu_\eps^N(s) -\Tu_\eps(s)|_{L^2} ds \to 0, \\
            \label {passage to limit: gamma u}  \TE \int_0^T \bigg| \int_0^t \chi_\Gamma (\phi, \gamma\Tu_\eps^N(s) - \gamma\Tu_\eps(s))_{L^2} ds \bigg| & \leq CT |\phi|_{H^3} \TE \int_0^T |\Tu_\eps^N(s) -\Tu_\eps(s)|_{L^2} ds \to 0, \\
            \eps \TE \int_0^T \bigg|\int_0^t   \chi_\Gamma \lp \phi,  \partial_x^4 \Tu_\eps^N(s) - \partial_x^4 \Tu_\eps(s) \rp_{L^2} ds \bigg| & \leq C T \eps |\phi|_{H^3} \TE \int_0^T |\partial_x \Tu_\eps^N (s)- \partial_x \Tu_\eps(s)|_{L^2} ds  \to 0,
            \end{align}
        as $N \to \infty$ along $\Lambda$.
         It remains to check the convergence for the two truncated nonlinear terms.  For that purpose we define, 
        \begin{equation*}
            \begin{aligned}
                \alpha_N (t)& := \bigg| \int_0^t \chi_\Gamma \bigl(\phi, \theta_N (|\Tu_\eps^N(s)|_{H^1}) P_N \Tu^N_\eps \partial_x(s) \Tu^N_\eps(s) - \Tu_\eps \partial_x \Tu_\eps(s)\bigr)_{L^2} ds \bigg| \\
                & \leq \bigg| \int_0^t \theta_N (|\Tu_\eps^N(s)|_{H^1}) \chi_\Gamma \bigl(\phi, P_N(\Tu_\eps^N(s)\partial_x (s)\Tu_\eps^N(s) - \Tu_\eps \partial_x \Tu_\eps(s)) \bigr)_{L^2} ds   \bigg|  \\
                & + \bigg| \int_0^t \theta_N (|\Tu_\eps^N(s)|_{H^1}) \chi_\Gamma \bigl( (I-P_N) \phi, \Tu_\eps(s) \partial_x \Tu_\eps(s) \bigr)_{L^2}  ds  \bigg| \\
                & + \bigg| \int_0^t (\theta_N (|\Tu_\eps^N(s)|_{H^1}) - 1)\chi_\Gamma \bigl( \phi, \Tu_\eps(s) \partial_x \Tu_\eps(s) \bigr)_{L^2}ds   \bigg| \\
                & =: \alpha_{N,1}(t) + \alpha_{N,2}(t) + \alpha_{N,3}(t).
            \end{aligned}
        \end{equation*}
        We will show that $\alpha_N \to 0$ in $L^1(\TOmega \times [0,T])$ by showing $\alpha_{N,j} \to 0$ in $L^1(\TOmega \times [0,T])$, $j = 1,2,3$. We start with the term $\alpha_{N,1}$. By an application of integration by parts formula, along with Corollary \ref{cor:skorohod estimates} and the strong convergence \eqref{strong conv: galerkin}, we obtain
        \begin{equation} \label{convergence: alpha_{N,1}}
            \begin{aligned}
               & { \TE\int_0^T} \alpha_{N,1}(t) dt  = \TE \int_0^T \bigg| \int_0^t\theta_N (|\Tu_\eps^N(s)|_{H^1}) \chi_\Gamma \left(\phi, P_N(\Tu_\eps^N(s) \partial_x \Tu_\eps^N(s)- \Tu_\eps(s)\partial_x \Tu_\eps(s)) \right) ds   \bigg| \\
                & \leq |\phi|_{L^\infty} T \TE \int_0^T \bigg|(\Tu_\eps^N(s) - \Tu_\eps(s))\partial_x \Tu_\eps^N(s) + \Tu_\eps(s)(\partial_x \Tu^N_\eps(s) - \partial_x \Tu_\eps(s))\bigg|_{L^1} ds \\
                & \leq  C|\phi|_{L^\infty}\lp \TE\int_0^T|\Tu_\eps^N(s) - \Tu_\eps(s)|_{L^2}^2 ds \rp^{1/2} \bigg[ \lp \TE\sup_{t\in[0,T]}|\Tu_\eps^N(t)|_{H^1}^2 \rp^{1/2}+ \lp \TE \sup_{t\in[0,T]} |\Tu_\eps(t)|_{H^1}^2 \rp^{1/2} \bigg] \\
                & \leq  C|\phi|_{L^\infty} \lp \TE\int_0^T|\Tu_\eps^N(s) - \Tu_\eps(s)|_{L^2}^2 ds \rp^{1/2} \to 0,
            \end{aligned}
        \end{equation}
        For $\alpha_{N,2}$, thanks to the Sobolev embedding $H^1 (\T) \hookrightarrow L^\infty(\T)$, we obtain
        \begin{equation*}  
            \begin{aligned}
                \alpha_{N,2}(t) & = \bigg| \int_0^t \theta_N (|\Tu_\eps^N(s)|_{H^1}) \chi_\Gamma \left( (I-P_N)\phi, \partial_x \Tu_\eps(s) \right)  ds  \bigg| 
                 \leq |(I-P_N)\phi|_{L^2}  \int_0^T  |\Tu_\eps(s) \partial_x \Tu_\eps(s)|_{L^2} ds  \\
                 & \leq C|(I-P_N)\phi|_{L^2} \sup_{t \in [0,T]}|\Tu_\eps(t)|^2_{H^1}
            \end{aligned}
        \end{equation*}
        It follows that $\alpha_{N,2} \to 0$, $d\TP \otimes dt$-a.e. Moreover, by noting that, 
        \begin{equation*} 
            \begin{aligned}
                \alpha_{N,2} & \leq 2 C |\phi|_{L^2} \sup_{t \in [0,T]}|\Tu_\eps(t)|^2_{H^1} \in L^1(\TOmega \times [0,T]),
            \end{aligned}
        \end{equation*}
        we can conclude that $\alpha_{N,2} \to 0$ in $L^1(\TOmega \times [0,T])$ due to the Dominated Convergence Theorem. Finally, for $\alpha_{N,3}$, we write
        \begin{equation}   \label{alphaN3}
            \begin{aligned}
                \alpha_{N,3}(t) & = \bigg| \int_0^t (\theta_N (|\Tu_\eps^N(s)|_{H^1}) -1) \chi_\Gamma \left( \phi, \Tu_\eps(s) \partial_x \Tu_\eps(s) \right)_{L^2} ds  \bigg|  \\
                & \leq T |\phi|_{L^\infty} \sup_{t\in [0,T]} |\Tu_\eps(t)|_{H^1}^2 \int_0^T |\theta_N(|\Tu_\eps^N(s)|_{H^1}) - 1| ds  \\
            \end{aligned}
        \end{equation}
        { Next we will show that $\alpha_{N,3} \to 0$, $d\TP \otimes dt$-a.e:
                First, we note that due to Corollary \ref{cor:skorohod estimates} { and \eqref{as conv: galerkin}}, we have
                \begin{equation} \label{u_eps H2 a.s. bound} 
                  \sup_{t\in [0,T]}|\Tu_\eps(t)|_{H^2} < \infty, \quad \TP\text{-a.s.}
                \end{equation}
                Therefore, to show that $\alpha_{N,3}\to 0$, $\TP$-a.s., it is sufficient to argue that the integral on the right side of \eqref{alphaN3} converges to $0$, $\TP$-a.s.
 From \eqref{u_eps H2 a.s. bound}, we infer that, for almost surely $\omega \in \TOmega$, we might pick $N_0$ large, depending on $\omega$, such that $\theta_N(|\Tu_\eps(s)|_{H^1}) = 1$ for all $s \in [0,T]$ and $N \geq N_0$. Noting that $\theta_N$ is a Lipschitz function satisfying \eqref{truncation lipschitz} for every $N\geq N_0$ that,
                \begin{equation}\label{thetaconv} 
                	\begin{aligned}
                		\int_0^T \bigl| \theta_N(|\Tu_\eps^N(s)|_{H^1}) - 1 \bigr| ds = \int_0^T \bigl| \theta_N(|\Tu_\eps^N(s)|_{H^1}) - \theta_N(|\Tu_\eps|_{H^1}) \bigr|  ds \leq C\int_0^T \bigl| |\Tu_\eps^N(s)|_{H^1} - |\Tu_\eps|_{H^1} \bigr| ds.
                	\end{aligned}
        	        \end{equation}  
        	        Hence, thanks to the convergence \eqref{as conv: galerkin}, we obtain that  $\alpha_{N,3}(t) \to 0$, $d\TP \otimes dt$-a.e. 
        	    
        	}
        Moreover, as noted above in \eqref{alphaN3}, $\alpha_{N,3}$ is dominated by a function in $L^1(\TOmega \times [0,T]).$       Hence, again by applying the Dominated Convergence Theorem we conclude that $\alpha_{N,3} \to 0$ in $L^1(\TOmega \times [0,T])$. 
        Collecting the convergence results for $\alpha_{N,j}; j=1,2,3$,  we have proved that 
                \begin{equation*}
            \alpha_N \to 0 \text{ in } L^1(\TOmega \times [0,T]).
        \end{equation*}
        Next we consider the following truncated term,
        \begin{equation*}
            \begin{aligned}
                \beta_N(t)& := \bigg|\int_0^t (\theta_N (|\Tu_\eps^N(s)|_{H^1}) \partial_x^3 \Tu_\eps^N(s) -   \partial_x^3 \Tu_\eps(s), \phi)_{L^2}  ds \bigg| \\
                & \leq \bigg|\int_0^t \theta_N (|\Tu_\eps^N(s)|_{H^1}) ( (\partial_x^3 \Tu_\eps^N(s) -  \partial_x^3 \Tu_\eps(s)), \phi)_{L^2}ds \bigg| +  \bigg|\int_0^t (\theta_N (|\Tu_\eps^N(s)|_{H^1}) - 1) (\partial_x^3 \Tu_\eps(s), \phi)_{L^2} ds \bigg| \\
                & \leq |\phi|_{H^2} \int_0^t |\Tu_\eps^N(s) - \Tu(s)|_{H^1} ds +  \int_0^t \bigg|(\theta_N (|\Tu_\eps^N(s)|_{H^1}) - 1) (\partial_x^3 \Tu_\eps(s), \phi)_{L^2} \bigg| ds
                 =: \beta_{N,1}(t) + \beta_{N,2}(t)
            \end{aligned}
        \end{equation*}
        and prove that $\beta_N \to 0$ in $L^1(\TOmega \times [0,T])$.
        
        We first observe that $\beta_{N, 1} \to 0$ in $L^1(\TOmega \times [0,T])$ since we have
        \begin{equation*}
            \begin{aligned}
                \TE \int_0^T \beta_{N,1}(t)dt \leq T |\phi|_{H^2} \TE \int_0^T |\Tu_\eps^N (s)- \Tu_\eps(s)|_{H^1}ds \to 0.
            \end{aligned}
        \end{equation*}
     
        {
        To treat the remaining term $\beta_{N,2}$, we first show $\beta_{N,2}(t) \to 0$, $d\TP\otimes dt$-a.e. By integrating by parts, we have
        \begin{equation} \label{betaN2}
        	\beta_{N,2}(t) \leq |\partial_x \phi|_{L^2} \sup_{t \in [0,T]} |\partial_x^2 \Tu_\eps(t)|_{L^2} \int_0^T \bigl| \theta_N(|\Tu_\eps^N(s)|_{H^1}) - 1 \bigr| ds.
        \end{equation}
        Then due to \eqref{thetaconv}, we conclude that $\beta_{N,2}(t) \to 0$, $d\TP \otimes dt$-a.e.
        Moreover, recalling that $|\theta_N| \leq 1$, and applying Young's inequality to \eqref{betaN2}, we further obtain
        \begin{equation*}
        	\beta_{N,2} \leq 4T^2 |\partial_x \phi|^2_{L^2} \sup_{t \in [0,T]} |\partial_x^2 \Tu_\eps(t)|_{L^2}^2 + 1 \in L^1(\TOmega \times [0,T]).
        \end{equation*}
        Hence, another application of Dominated Convergence Theorem concludes that $\beta_{N,2} \to 0 \text{ in } L^1(\TOmega \times [0,T])$.
        } 
       This completes the proof for our claim that $\beta_N \to 0$ in $L^1(\TOmega \times [0,T])$.
       
     Finally we assemble all the convergence results above and conclude that,  
        \begin{equation*}
            \begin{aligned}
            \chi_\Gamma \Tu_\eps(t) & + \int_0^t \chi_\Gamma \bigg( \theta_N(|\Tu_\eps(s)|_{H^1}) \Tu_\eps(s) \partial_x \Tu_\eps(s) + \partial_x^3 \Tu_\eps(s) + \gamma \Tu_\eps(s) + \eps \partial_x^4 \Tu_\eps(s) \bigg) ds \\
            & = \chi_\Gamma \Tu_{0} + \int_0^t \chi_\Gamma G(\Tu_\eps(s-)) dW(s) +  \int_0^t \int_{E_0} \chi_\Gamma K(\Tu_\eps(s-), \xi) d\widehat{\widetilde{\pi}}(s,\xi)
            \end{aligned}
        \end{equation*}
        holds almost surely in $H^{-2}(\T)$ for every $t\in[0,T]$. Moreover, since $\Tu_\eps$ takes c\`{a}dl\`ag paths in $H^{-2}(\T)$, $\TP$-a.s., thanks to Lemma 5.13 of \cite{CTT18euler}, we can show that $\Tu_\eps$ solves the regularized equation \eqref{eqn: regularized} with initial condition $\Tu_{0}$ for all $t \in [0,T]$. 
        This finishes the proof of existence of a global martingale solution to the regularized equation \eqref{eqn: regularized}.
    \end{proof}
Passage of $\epsilon\to0$ is handled similarly and we skip the details.
\subsection{Global pathwise solution} \label{section: pathwise uniqueness}
In this section, we aim to prove the pathwise uniqueness for the equation \eqref{eqn: skdv}. With the pathwise uniqueness, we will be able to conclude the existence of a unique pathwise solution to the equation \eqref{eqn: skdv}, by an application of a result by Gy\"{o}ngy and Krylov \cite{GK96}, which is an infinite dimensional version of the result by Watanabe and Yamada \cite{YW71}. 
\begin{definition}[Pathwise uniqueness]\label{def:unique}
    We say that the \emph{pathwise uniqueness} holds for the stochastic KdV equation \eqref{eqn: skdv} if for every pair of global martingale solutions $u$ and $v$ to $\eqref{eqn: skdv}$ with respect to the same sotchastic basis $\lp \Omega, \mathcal{F}, (\mathcal{F}_t)_{t \geq 0}, \P, W, \pi \rp$, one has
    \begin{equation}
        \P\bigg[ \mathbf{1}_{\{u(0) = v(0)\}} (u(t) - v(t)) = 0 \ \forall t \in [0,T] \bigg] = 1.
    \end{equation}
\end{definition}

\begin{theorem} \label{thm: pathwise uniqueness}
    There exists a unique pathwise solution to the stochastic KdV equation \eqref{eqn: skdv} in the sense of Definition \ref{def:unique}. 
\end{theorem}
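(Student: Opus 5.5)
The plan is to prove pathwise uniqueness in the sense of Definition \ref{def:unique} with an $L^2$ energy estimate for the difference, localized by stopping times. The existence of a unique pathwise solution then follows from the Gy\"ongy--Krylov argument \cite{GK96}, combined with the existence of martingale solutions from Section \ref{sec:existence} (after $\eps\to0$). Fix two martingale solutions $u,v$ on the same stochastic basis. Set $w:=u-v$ and $\Omega_0:=\{u(0)=v(0)\}\in\F_0$. Since $\Omega_0$ is $\F_0$-measurable, it suffices to estimate $\mathbf{1}_{\Omega_0}w$. The difference satisfies, in $H^{-2}(\T)$,
\begin{equation*}
dw+\bigl(\partial_x^3 w+\tfrac12\partial_x\bigl((u+v)w\bigr)+\gamma w\bigr)dt=\bigl(G(u)-G(v)\bigr)dW+\int_{E_0}\bigl(K(u,\xi)-K(v,\xi)\bigr)d\widehat{\pi}.
\end{equation*}
Both solutions lie in $L^\infty(0,T;H^2(\T))$ a.s. Therefore $w\in L^\infty_tH^2_x$, and the It\^o formula for $|w|_{L^2}^2$ is legitimate. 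If needed, one can justify it by mollifying or Fourier-truncating and passing to the limit, in the spirit of the It\^o formula for c\`adl\`ag semimartingales in a Gelfand triple $H^2\subset L^2\subset H^{-2}$.

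For a fixed $R>0$, introduce the stopping time
\begin{equation*}
\tau_R:=\inf\{t\ge0:\ |u(t)|_{H^2}+|v(t)|_{H^2}>R\}\wedge T.
\end{equation*}
It is well defined because the paths are c\`adl\`ag in $H^{-2}$ and weakly c\`adl\`ag in $H^2$, as in the space $\mathcal X_u$. Apply It\^o to $\mathbf 1_{\Omega_0}|w(t\wedge\tau_R)|_{L^2}^2$. The dispersive term vanishes, since $(\partial_x^3w,w)=0$. For the nonlinear term, integrate by parts:
\begin{equation*}
\bigl(\partial_x((u+v)w),w\bigr)=\tfrac12\int_\T\partial_x(u+v)\,w^2\,dx.
\end{equation*}
This is bounded by $C|\partial_x(u+v)|_{L^\infty}|w|_{L^2}^2\le CR|w|_{L^2}^2$ for $s\le\tau_R$, using $H^2\hookrightarrow W^{1,\infty}$. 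Assumption \eqref{assumption: lipschitz} with $s=0$ controls both the It\^o correction terms and the compensator of the jump quadratic variation by $C|w|_{L^2}^2$. The martingale terms are handled by BDG, absorbing as in the proof of Theorem \ref{ape:thm1}. Gronwall's inequality then gives
\begin{equation*}
\E\sup_{t\le\tau_R}\mathbf 1_{\Omega_0}|w(t)|_{L^2}^2\le e^{C(R)T}\,\E\,\mathbf 1_{\Omega_0}|w(0)|_{L^2}^2=0.
\end{equation*}

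Next, let $R\to\infty$. Because $u,v\in L^2(\Omega;L^\infty(0,T;H^2))$, Chebyshev's inequality gives $\P(\tau_R<T)\le C/R^2\to0$. Hence $\mathbf 1_{\Omega_0}w=0$ on $[0,T]$ a.s. Right-continuity in $H^{-2}$ upgrades this to all $t$ simultaneously. That proves pathwise uniqueness. Finally, to produce the pathwise solution, invoke the Gy\"ongy--Krylov criterion. Take two subsequences of the $\eps$- (or $N$-) approximations and apply the Skorohod argument of Theorem \ref{thm:skorohod} to the joint laws on $\mathcal X\times\mathcal X_u$. Any joint limit point then yields two martingale solutions on a common basis with identical initial data, and pathwise uniqueness forces the limit to be concentrated on the diagonal. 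This gives convergence in probability of the original sequence on the original basis, and hence a pathwise solution with the stated regularity, inherited from Theorems \ref{ape:thm1}--\ref{ape:thm3}.

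I expect the main obstacle to be the rigorous justification of the stopping-time and It\^o-formula step. The solutions are only c\`adl\`ag in $H^{-2}$, so measurability of $\tau_R$ and the validity of the $L^2$ It\^o formula need care. My first attempt would define $\tau_R$ via the $H^1$-norm, using weak c\`adl\`ag in $H^2$ and lower semicontinuity, and would apply the It\^o formula from \cite{CNTT20}/\cite{CTT18euler} to Galerkin projections $P_Mw$, letting $M\to\infty$. The jump of the solution at $\tau_R$ is harmless, since the estimate is only used on $[0,\tau_R)$ together with the left-limit bound.
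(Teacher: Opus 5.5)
Your proposal is correct and follows essentially the paper's route: an It\^o formula for $|w|_{L^2}^2$ on the event $\Omega_0$, BDG together with \eqref{assumption: lipschitz} for the noise terms, integration by parts to bound the nonlinearity by $|w|_{L^2}^2$ times an $H^2$-norm of the solutions, and stopping-time localization. The only real difference is that you stop when $|u|_{H^2}+|v|_{H^2}$ exceeds $R$ and use the ordinary Gronwall lemma, whereas the paper stops when $\int_0^t|v|_{H^2}^2\,ds$ reaches $n$ and applies the stochastic Gronwall lemma of Glatt-Holtz--Ziane; the paper uses the identity $(w,u\partial_x u-v\partial_x v)_{L^2}=\tfrac12\int_\T w^2\partial_x v\,dx$, so only $v$ enters, and it invokes the Gy\"ongy--Krylov step in the surrounding text rather than inside the proof.
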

    \begin{proof}
        Let $u$ and $v$ be two global martingale solutions to $\eqref{eqn: skdv}$ with respect to the same sotchastic basis $\lp \Omega, \mathcal{F}, (\mathcal{F}_t)_{t \geq 0}, \P, W, \pi \rp$.
        Define $\Omega_0 := \{u_0 = v_0\}$ with $\P(\Omega_0) = 1$, and $w(t) := \mathbf{1}_{\Omega_0}(u(t) - v(t))$. We will show that 
        \[
        \E \sup_{t \in [0,T]} |w(t)|_{L^2}^2 = 0.   
        \]
        Define 
        \[\tau_n := \inf \bigg\{ t > 0: \int_0^t \mathbf{1}_{\Omega_0}|v(s)|^2_{H^2} ds \geq n \bigg\} \wedge T.
        \]
        Then $\tau_n$ is a stopping time with $\tau_n \nearrow T \ \P$-a.s. By Monotone Convergence Theorem, it suffices to show
        \[
        \E \sup_{t \in [0,\tau_n]} |w(t)|_{L^2}^2 = 0.   
        \]
        Observe that $w$ solves the following equation
        \begin{equation} \label{eqn: pathwise uniqueness}
            \begin{cases}
            dw(t) + \left(\partial_x^3 w(t) + \gamma w (t)+ u(t)\partial_x u(t) - v(t)\partial_x v(t)\right) dt \\
            \hspace{1cm} = (G(u(t-)) - G(v(t-))) dW(t) + \int_{E_0} (K(u(t-), \xi) - K(v(t-),\xi)) d\widehat{\pi}(\xi, t), \\
            w(0) = 0.
            \end{cases}
        \end{equation}
        Apply It\^{o}'s formula with $\mathcal{I}_0(w) := |w|_{L^2}^2$ to \eqref{eqn: pathwise uniqueness} up to some time $t \leq \tau_n$, we obtain:
        \begin{equation}  \label{uniqueness: ito formula}
            \begin{aligned}
                & |w(t)|_{L^2}^2 + 2\gamma \int_0^t |w(s)|^2_{L^2} ds + 2 \int_0^{t}  \left(w(s), u(s)\partial_x u(s)-  v(s)\partial_x v(s) \right)_{L^2} ds \\
                 & = 2\int_0^{t} (w(s), (G(u(s)) - G(v(s)))dW(s))_{L^2} ds + 2\int_0^{t} \int_{E_0} (w(s), K(u(s), \xi) - K(v(s),\xi))_{L^2} d\widehat{\pi}(\xi,s)  \\
                 & + \int_0^{t}  \|G(u(s)) - G(v(s))\|_{HS(U_0, L^2)}^2 ds  + \int_0^{t} \int_{E_0} |K(u(s),\xi) - K(v(s),\xi)|_{L^2}^2 d\pi(\xi,t) \\
                & =: B_1(t) + B_2(t) + B_3(t) + B_4(t).
            \end{aligned}
        \end{equation}
        We will take $\E\sup_{t\in[0,\tau_n]}$, and estimate $B_j; j = 1,2,3,4$ term by term. We treat the terms $B_1$ and $B_2$ in a similar way to \eqref{eqn: ape wiener term} and \eqref{eqn: ape CPRM term}. We apply BDG inequality, the Lipschitz condition \eqref{assumption: lipschitz}, and Young's inequality, to obtain
        \begin{equation} \label{uniqueness: B1, B2}
            \begin{aligned}
                \E\sup_{t \in [0,\tau_n]} (| B_1(t)| + |B_2(t)|) \leq \frac{1}{2} \E \sup_{t \in [0,\tau_n]}|w(t)|_{L^2}^2 + C \E \int_0^{\tau_n} |w(s)|_{L^2}^2 ds.
            \end{aligned}
        \end{equation}
        For $B_3$ and $B_4$ terms, using  It\^{o} Isometry and the Lipschitz condition \eqref{assumption: lipschitz} on $G$ and $K$, we obtain 
        \begin{equation} \label{uniqueness: B3, B4}
            \begin{aligned}
                \E\sup_{t \in [0,\tau_n]}\lp |B_3(t)| + |B_4(t)| \rp \leq C \E\int_0^{\tau_n} |w(s)|_{L^2}^2 ds.
            \end{aligned}
        \end{equation}
        Next, we turn to the nonlinear term on the left side of \eqref{uniqueness: ito formula}. Integrating by parts, and applying the Sobolev embedding $H^2(\T) \hookrightarrow L^\infty(\T)$ give us
        \begin{equation} \label{uniqueness: deterministic}
            \begin{split}
                2 \int_0^{\tau_n}  \left(w(s), u(s)\partial_x u(s) -  v(s)\partial_x v(s) \right)_{L^2} ds &= \int_0^{\tau_n} (w(s), w(s) \partial_x v(s) )_{L^2} \\
                &\leq C \int_0^{\tau_n} |w(s)|_{L^2}^2 |v(s)|_{H^2} ds.
            \end{split}
        \end{equation}
        Substituting \eqref{uniqueness: B1, B2}, \eqref{uniqueness: B3, B4} and \eqref{uniqueness: deterministic} back to \eqref{uniqueness: ito formula}, we have
        \begin{equation} \label{uniqueness: GW condition 1}
        	\E\sup_{t \in [0,\tau_n]}|w(t)|_{L^2}^2 \leq C \E \int_0^{\tau_n} \biggl( (|v(s)|^2_{H^2} + C) \sup_{r \in [0,s]}|w(r)|_{L^2}^2 \biggr) ds.
        \end{equation}
       Note that by definition of $\tau_n$, 
       \begin{equation}  \label{uniqueness: GW condition 2}
       	\int_0^t |v(s)|_{H^2}^2 ds \leq n, \quad \text{$\P$-a.s. for all $t \leq \tau_n$}. 
       \end{equation}
        Hence, 
        \begin{equation} 
        \begin{aligned}  \label{uniqueness: GW condition 3}
            \E \bigg[\int_0^{\tau_n} (|v(s)|^2_{H^2} + C) \sup_{r \in [0,s)}|w(r)|_{L^2}^2 ds \bigg] \leq (n + CT) \E\biggl[\sup_{t \in [0,T]}|w(t)|_{L^2}^2 \biggr] < \infty
        \end{aligned}
        \end{equation}
        With \eqref{uniqueness: GW condition 1}, \eqref{uniqueness: GW condition 2} and \eqref{uniqueness: GW condition 3}, we  apply the Stochastic Gronwall Lemma \cite[Lemma 5.3]{NaMo09} to conclude
        \[
            \E\sup_{t \in [0,\tau_n]}|w(t)|_{L^2}^2 = 0,
        \]
        completing the proof for the pathwise uniqueness.
    \end{proof}

    Pathwise uniqueness of solutions can then be translated to uniqueness in law as is proved in Theorems 2 and 11  in \cite{Ondrejt04}. Thus, we obtain the following corollary.
\begin{corollary}[Uniqueness in Law]\label{uniquelaw}
	For any $T>0$ and every pair of global martingale solutions $(u^{(i)}, \Omega^i, \F_i, (\F^i_t)_{t \geq 0}, \P^i, W^i, \pi^i)$ to \eqref{eqn: damped skdv}  with the initial distributions $\mathcal{L}_{\P^1}(u_0^{(1)}) = \mathcal{L}_{\P^2}(u_0^{(2)})$ we have
	\begin{equation}
		\mathcal{L}_{\P^1}(u^{(1)}) = 	\mathcal{L}_{\P^2}(u^{(2)}) \text{ on  { $L^2(0,T; H^1(\T)) \cap \mathcal{D}([0,T]; H^{-2}(\T)) \cap \mathcal{D}([0,T]; H^2_w(\T))$},}
	\end{equation}
	where $\mathcal{L}(\cdot)$ denotes the law.
\end{corollary}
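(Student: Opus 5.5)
Corollary \ref{uniquelaw} will be deduced from Theorem \ref{thm: pathwise uniqueness} by the infinite-dimensional Yamada--Watanabe argument in the form of Ondrej\'at \cite{Ondrejt04} (Theorems 2 and 11), adapted to our Wiener-plus-compensated-Poisson noise. Fix two martingale solutions $(u^{(i)},\Omega^i,\F_i,(\F^i_t),\P^i,W^i,\pi^i)$, $i=1,2$, with the same initial law $\mu_0$. Let $\mathcal{Y}:=L^2(0,T;H^1(\T))\cap\mathcal{D}([0,T];H^{-2}(\T))\cap\mathcal{D}([0,T];H^2_w(\T))$. Consider the joint laws $\Lambda_i$ of $(u^{(i)}_0,u^{(i)},W^i,\pi^i)$ on $H^2(\T)\times\mathcal{Y}\times C([0,T];U_0)\times\mathcal{N}^{\#}_{[0,T)\times E}$. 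Since $(u_0^{(i)},W^i,\pi^i)$ are independent with the same law (the initial datum is $\F_0$-measurable, and the noises are a Wiener process and a Poisson measure with the same intensity $dt\otimes d\nu$), the marginal law $\Lambda_0$ of $(u_0,W,\pi)$ is common to both solutions.

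First, I will disintegrate $\Lambda_i(dz,dy)=Q_i(z,dy)\,\Lambda_0(dz)$ with $z=(u_0,W,\pi)$. For this, $\mathcal{Y}$ must carry a Borel $\sigma$-field generated by countably many continuous functionals, so that regular conditional probabilities exist. This holds because $\mathcal{D}([0,T];H^2_w)$ is a Jakubowski-type space (cf.\ \cite{jakubowski1998almost}, \cite{motyl2013}), and its $\sigma$-field coincides with the one generated by point evaluations tested against a countable dense set. Next, on the product space $\Omega^*:=Z\times\mathcal{Y}\times\mathcal{Y}$ I will place the measure $\P^*(dz,dy_1,dy_2):=Q_1(z,dy_1)Q_2(z,dy_2)\Lambda_0(dz)$, with the usual augmented filtration generated by the coordinates up to time $t$.

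The key step is to verify that on $\Omega^*$ the coordinate processes $W,\pi$ are still an $(\F^*_t)$-Wiener process and an $(\F^*_t)$-Poisson random measure, with $\widehat\pi$ the compensated measure. Concretely, for $s<t$, $W(t)-W(s)$ and $\pi((s,t]\times\Gamma)$ must be independent of $\F^*_s$. This uses the fact that under each $\P^i$ the future increments are independent of $(u_0,u^{(i)}|_{[0,s]},W|_{[0,s]},\pi|_{[0,s]})$. Hence $Q_i(z,\cdot)$ restricted to $\sigma(y|_{[0,s]})$ depends only on $z|_{[0,s]}$, and the conditional independence of $y_1,y_2$ given $z$ transfers the property. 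This is the standard Ondrej\'at computation, carried out separately for $W$ and for $\pi$ using Laplace functionals.

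Then both $y_1$ and $y_2$ solve \eqref{eqn: skdv} on this common basis. To see this, write the identity in Definition \ref{definition: martingale} as the vanishing of a measurable functional of $(y,W,\pi)$, with the stochastic integrals realized as limits in probability of Riemann-type sums (cf.\ Lemma 5.13 of \cite{CTT18euler}). This vanishing is a property of the law $\Lambda_i$, so it holds $\P^*$-a.s. The required integrability $y_i\in L^8L^\infty L^2\cap L^2L^\infty H^2$ also transfers through the laws. Theorem \ref{thm: pathwise uniqueness} now yields $y_1=y_2$ $\P^*$-a.s. Consequently $Q_1(z,\cdot)\otimes Q_2(z,\cdot)$ is concentrated on the diagonal for $\Lambda_0$-a.e.\ $z$, which forces $Q_1(z,\cdot)=Q_2(z,\cdot)=\delta_{F(z)}$. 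Therefore $\Lambda_1=\Lambda_2$, and in particular $\mathcal{L}_{\P^1}(u^{(1)})=\mathcal{L}_{\P^2}(u^{(2)})$ on $\mathcal{Y}$.

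The main obstacle will be the measurability and topological framework. Regular conditional laws and the identification of the stochastic integral as a law-determined measurable functional must be justified in the non-metrizable space $\mathcal{D}([0,T];H^2_w)$ and in $\mathcal{N}^{\#}$. I would handle this by first working on the Polish space $L^2(0,T;H^1)\cap\mathcal{D}([0,T];H^{-2})$, where everything is standard. I would then observe that the Borel sets of $\mathcal{Y}$ are traces of Borel sets of that Polish space, since the solutions are $\P$-a.s.\ in $L^\infty H^2$ and weak $H^2$ evaluations are Borel functions of the $H^{-2}$ path.
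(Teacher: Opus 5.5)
Your proposal is correct and follows the paper's route. The paper proves Corollary~\ref{uniquelaw} in one line, by combining the pathwise uniqueness of Theorem~\ref{thm: pathwise uniqueness} with Ondrej\'at's Yamada--Watanabe-type results (Theorems 2 and 11 of \cite{Ondrejt04}). Your sketch unpacks that citation: disintegration over the common law of $(u_0,W,\pi)$, the product-space construction, transfer of the Wiener/Poisson property and of the equation, and the diagonal argument. In doing so it also makes explicit two points the paper's bare citation leaves implicit, namely the extension to the compensated Poisson noise and the measurability issues in $\mathcal{D}([0,T];H^2_w(\T))$.
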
 
\section{Existence of invariant measure} \label{section: invariant measure}
In this section, we will consider the stochastic damped KdV equation \eqref{eqn: skdv} obtained by setting $\mathscr{K}:=0$ in \eqref{eqn: skdv}.  We restate the equation here,
\begin{align} \label{eqn: damped skdv}
	du(t) + (\partial_x^3 u(t) + u \partial_x u (t)+ \gamma u(t))dt = G(u(t-)) dW(t) +  \int_{E_0} K(u(t-), \xi) d\widehat{\pi}(t,\xi).
\end{align}

Recall for any $T>0$, that Theorem \ref{Thm: summary existence}, proved in Section \ref{sec:existence}, gives the existence of a unique pathwise solution $u_T$ to \eqref{eqn: skdv}, and thus to \eqref{eqn: damped skdv}, in the sense of Definition \ref{definition: pathwise}.
Note that, for any given stochastic basis $(\Omega, \mathcal{F}, (\mathcal{F}_t)_{t\geq 0}, \P, W, \pi)$ we can extend this solution to the entire time interval $[0,\infty)$ by defining
\[u(t) := u_T(t), \;\; \text{ if } 0\leq t\leq T.\]
We emphasize that this definition of $u$ is appropriate since $u_T$ is pathwise unique which means that, for any $0<T_{1}< T_{2} $, the corresponding solutions $u_{T_{1}}$ and $u_{T_{2}}$ on the time intervals $[0, T_1]$ and $[0, T_2]$ respectively, agree on $[T_1, T_2]$. 
For any appropriate initial value $u_0$, we denote this global-in-time unique pathwise solution of \eqref{eqn: damped skdv} by $u(t) := u(t; u_0)$ for $t\in[0,\infty)$.

To establish the existence of an invariant measure for the semigroup $\{P_t\}_{t \geq 0}$ defined in \eqref{Pt} associated with \eqref{eqn: damped skdv}, we will apply the result of Maslowski-Seidler \cite[Proposition 3.1]{MaSe99} which generalizes the classical Krylov-Bogoliubov Theorem to spaces endowed with weak topology.
    Before stating this result, we introduce some notation. Recall that $H^2_w(\T)$ denotes the space $H^2(\T)$ equipped with the weak topology. We will denote by $SC(H^2_w(\T))$ the collections of functions that are sequentially continuous with respect to the weak topology $H^2_w(\T)$. Furthermore, by using the subindex $b$ in the notation $SC_b(H^2_w(\T))$ we add the requirement of boundedness of the functions. 
    We recall the following inclusions:
    \[
    C_b(H^2_w(\T)) \subset SC_b(H^2_w(\T)) \subset C_b(H^2(\T)). 
    \]
    Now we state the result by Maslowski-Seidler which gives us sufficient conditions for the semigroup $\{P_t\}_{t\geq 0}$ for existence of an associated invariant measure.
    
    \begin{theorem}[Proposition 3.1 in \cite{MaSe99}]\label{thm:MS}
    	Let $V$ be a Hilbert space. Let $\{P_t\}_{t \geq 0}$ be a Markov transition semigroup on $V$, and $\{P_t^*\}_{t \geq 0}$ be its adjoint semigroup defined by
    	\begin{equation}
    		\int_V \phi(x) (P_t^*\mu)(dx) =\int_V P_t\phi(x)\mu(dx),
    		\qquad \phi\in C_b(V_w),\ \mu\in Pr(V).
    	\end{equation}
    	 Assume that the following two conditions are satisfied:
    	\begin{enumerate}[(i)]
    		\item The semigroup $\{P_t\}_{t \geq 0}$ is sequential weak Feller in $V$, i.e., for any $t\geq0$, $P_t$ maps $C_b(V_w)$ into $SC_b(V_w)$, where $V_w$ stands for the space $V$ equipped with the weak topology.
    		\item There is a Borel probability measure $\nu$ on $V$ and $T_0 \geq 0$ such that for any $\varepsilon > 0$ there exists $R = R(\varepsilon)$ that satisfies
    		\begin{equation} \label{MS:tightness general}
    			\sup_{T \geq T_0} \frac{1}{T} \int_0^T P_t^*\nu( \{ |x|_V > R  \}  ) dt < \varepsilon.
    		\end{equation}
    	\end{enumerate}
    	Then there exists an invariant measure for $\{P_t\}_{t \geq 0}$.
    \end{theorem}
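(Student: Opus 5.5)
The plan is the Krylov--Bogoliubov averaging argument, carried out in the weak topology of $V$. Throughout I take $V$ separable, as it is for $V = H^2(\T)$, and I use that the Borel $\sigma$-fields of $V$ and $V_w$ coincide.

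\textbf{Step 1: averaged measures and tightness.} For $T \geq T_0$ I define the averaged measures
\begin{equation*}
\mu_T := \frac{1}{T}\int_0^T P_t^*\nu \, dt ,
\end{equation*}
understood through $\int_V \phi \, d\mu_T = \frac{1}{T}\int_0^T \int_V P_t\phi \, d\nu \, dt$. Measurability in $t$ comes from the Markov property of the transition function. Closed balls $B_R = \{|x|_V \leq R\}$ are weakly compact and, since $V$ is separable, weakly metrizable. So hypothesis (ii) says exactly that the family $\{\mu_T\}_{T\geq T_0}$ is uniformly tight with respect to compacts of $V_w$.

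\textbf{Step 2: a weakly convergent subsequence and its Skorohod representation.} The space $V_w$ carries a countable family of continuous functions separating points, namely $x\mapsto \langle x, e_k\rangle_V$ for an orthonormal basis $(e_k)$. I would therefore invoke Jakubowski's version of the Skorohod theorem \cite{jakubowski1998almost}, which was already used in Theorem \ref{thm:skorohod}. It gives a sequence $T_n\to\infty$, a probability measure $\mu$ on $V$, and, on some probability space, random variables $X_n\sim\mu_{T_n}$ and $X\sim\mu$ with $X_n \rightharpoonup X$ weakly in $V$ almost surely. By Step 1, $\mu$ is concentrated on $\bigcup_R B_R = V$.

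\textbf{Step 3: invariance (the main obstacle).} Fix $s\geq 0$ and $\phi\in C_b(V_w)$. By the semigroup property,
\begin{equation*}
\Bigl|\int P_s\phi \, d\mu_{T_n} - \int \phi \, d\mu_{T_n}\Bigr| = \frac{1}{T_n}\Bigl|\int_{T_n}^{T_n+s}\!\!\int P_t\phi\, d\nu\, dt - \int_0^{s}\!\!\int P_t\phi\, d\nu\, dt\Bigr| \leq \frac{2s\|\phi\|_\infty}{T_n}\to 0 .
\end{equation*}
Passing to the limit in $\int \phi\, d\mu_{T_n}$ is immediate, since $\phi\in C_b(V_w)$.

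The difficulty is the term with $P_s\phi$. Hypothesis (i) only gives $P_s\phi\in SC_b(V_w)$, which is sequentially weakly continuous but need not be weakly continuous, so weak convergence of measures alone does not allow the limit. This is exactly why Step 2 uses an almost-sure representation rather than abstract weak convergence. Since $X_n\rightharpoonup X$ almost surely, sequential continuity gives $P_s\phi(X_n)\to P_s\phi(X)$ almost surely. Bounded convergence then yields $\int P_s\phi\, d\mu_{T_n} = \E P_s\phi(X_n) \to \E P_s\phi(X) = \int P_s\phi\, d\mu$. Combining this with the display above,
\begin{equation*}
\int \phi \, d(P_s^*\mu) = \int P_s\phi\, d\mu = \int\phi\, d\mu \qquad \text{for all } \phi\in C_b(V_w).
\end{equation*}

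\textbf{Step 4: identification of the measures.} To conclude $P_s^*\mu=\mu$, I use that $C_b(V_w)$ is measure-determining on $V$. It contains all functions of the form $x\mapsto g(\langle x,e_1\rangle,\dots,\langle x,e_k\rangle)$ with $g\in C_b(\R^k)$. These determine the finite-dimensional cylinder marginals, and cylinder sets generate the Borel $\sigma$-field of $V$. Hence $\mu$ is invariant for $\{P_t\}_{t\geq 0}$.
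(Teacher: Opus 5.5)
Your proof is correct. The paper does not prove this statement: it quotes it from Maslowski--Seidler, describing it as a weak-topology version of the Krylov--Bogoliubov theorem. So there is no in-paper argument to match, and what you give is a self-contained version of that averaging scheme.

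The one step where routes can genuinely differ is the limit $\int P_s\phi\,d\mu_{T_n}\to\int P_s\phi\,d\mu$ when $P_s\phi$ is only in $SC_b(V_w)$. A Prokhorov-type route, e.g.\ on $V$ with a metric inducing the weak topology on bounded sets, gives convergence only against weakly continuous test functions. It then needs a separate lemma: restrict to $B_R$, where sequential weak continuity is continuity, extend by Tietze, and control the complement uniformly by (ii) and by the portmanteau inequality for the limit. Your use of Jakubowski's almost sure representation, already used in the paper in Theorem~\ref{thm:skorohod}, reduces this to sequential continuity plus bounded convergence. It also makes clear that you only need $P_s\phi\in SC_b(V_w)$ for $\phi\in C_b(V_w)$. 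That is exactly hypothesis (i) as stated, and it is what Lemma~\ref{lemma: weak feller} checks.

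Three minor remarks:
\begin{itemize}
\item Separability of $V$ is genuinely used. It supplies the countable separating family for Jakubowski's theorem and the metrizability of weakly compact balls. It also gives the equality of the Borel and cylinder $\sigma$-fields in Step 4. This costs nothing here, since $V=H^2(\T)$.
\item Joint measurability of $(t,x)\mapsto P_t(x,A)$ does not follow from the Chapman--Kolmogorov property. It is a standing assumption, already implicit in hypothesis (ii), where $t\mapsto P_t^*\nu(\{|x|_V>R\})$ is integrated.
\item The sentence saying that $\mu$ is concentrated on $\bigcup_R B_R=V$ is vacuous and can be dropped.
\end{itemize}
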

Our goal now is to prove that $\{P_t\}_{t \geq 0}$ defined in \eqref{Pt} satisfies the two conditions stated in Theorem \ref{thm:MS} with $V=H^2(\T)$.
    We will first verify the sequential weak Feller property of $\{P_t\}_{t \geq 0}$ in $H^2(\T)$. 
\begin{lemma} \label{lemma: weak feller}
	        The transition semigroup  $\{P_t\}_{t \geq 0}$ defined in \eqref{Pt} is sequentially weak Feller in $H^2(\T)$. In other words, for any $\phi \in C_b(H^2_w(\T))$  we have that $P_t\phi \in SC_b(H^2_w(\T))$.
	        \begin{proof}
	        		Fix any $\phi \in C_b(H^2_w(\T))$. In order to prove that $P_t\phi \in SC_b(H^2_w(\T))$, we must show that {for any $\{u_{n0}\}_{n=0}^\infty \subset H^2(\T)$ and $u_0\in H^2(\T)$ satisfying} $ u_{n0} \rightharpoonup u_0$ weakly in $H^2(\T)$, we have that $P_t\phi(u_{n0}) \to P_t\phi(u_0)$ as $n \to \infty$. \\ 
		            For any $t\geq 0$ we define $u_n(t) := u(t; u_{n0})$ and $u(t) := u(t; u_0)$ where we recall that $u$ denotes the unique pathwise solution to \eqref{eqn: damped skdv} corresponding to the initial data $u_{n0}, u_0$, respectively. \\
		            We first observe that there exists some large constant, $\mathcal{M} > 0$ we have
		            \[
		               \sup_{n \geq 0} \big[|u_{n0}|_{H^2}^2 + |u_{n0}|_{L^2}^8 \big]< \mathcal{M}. 
		            \]
		             We also recall Theorems \ref{ape:thm1}, \ref{ape:thm3} and \ref{ape:thm_fractional} which, for any $T>0$, give us the following bounds for $u_n$:
		                \begin{align*}
			                    \E \sup_{t \in [0,T]} |u_n(t)|^8_{L^2} & \leq C, \qquad
			                    \E \sup_{t \in [0,T]} |u_n(t)|^2_{H^2}  \leq C, \\
			                    \E \|u_n\|^2_{W^{1/4, 2}(0,T; H^{-2})} & \leq C,
			                \end{align*}
		            where $C$ depends on $\mathcal{M}, T$ but is independent of $n$. Therefore, following the arguments laid out in Proposition \ref{tightness:L2H2weak}, we can show that $(u_n)_{n \geq 0}$ is tight in the space $\mathcal{Y}_u$:
		        \begin{equation*}
		        	\mathcal{Y}_u := L^2_{loc}(0,\infty;H^1(\T)) \cap \mathcal{D}_{loc}([0,\infty); H^{-2}(\T)) \cap  \mathcal{D}_{loc}([0,\infty); H^2_w(\T)).
		        \end{equation*}
Applying Jakubowski's version of the classical Skorohod Representation Theorem to the sequence $(u_n, W, \pi)_{n \geq 0} \subset \mathcal{Y}_u \times C([0,\infty), U) \times \mathcal{N}_{[0,\infty)\times E}^{\#}$, as done in Theorem \ref{thm:skorohod}, we obtain a probability space $(\widetilde{\Omega}, \widetilde{\F}, \widetilde{\P})$ and 
		            { $\mathcal{Y}_u$}-valued random variables $\lp \widetilde{u}, \widetilde{W}, \widetilde{\pi} \rp$ and $\lp  \widetilde{u}_{n_k}, \widetilde{W}_{n_k}, \widetilde{\pi}_{n_k} \rp_{k \in \N}$. We then appropriately define the filtrations $( \widetilde{\F}^{n_k}_t)_{t\geq 0}$ and $( \widetilde{\F}_t)_{t\geq 0}$ as done in \eqref{filtration}. Then we have the following hold true:
		            \begin{enumerate}[(i)]
		            	\item  $\lp \widetilde{u}_{n_k}, \widetilde{W}_{n_k}, \widetilde{\pi}_{n_k} \rp$ has the same law as $(u_{n_k}, W, \pi)$ for each $k \in \N$, and
		            	\begin{equation} \label{as conv: weak feller}
		            		\lim_{k \to \infty}\lp\widetilde{u}_{n_k}, \widetilde{W}_{n_k}, \widetilde{\pi}_{n_k} \rp = \lp \widetilde{u}, \widetilde{W}, \widetilde{\pi} \rp, \ \widetilde{\P}\text{-a.s.}
		            	\end{equation}
		            	\item For every $n_k$, $\widetilde{u}_{n_k}$ is a solution to the Problem \eqref{eqn: damped skdv} with respect to the stochastic basis $\lp \widetilde{\Omega}, \widetilde{\F}, \widetilde{\P}, (\TF^{n_k}_t)_{t \geq 0}, \widetilde{W}_{n_k}, \widetilde{\pi}_{n_k} \rp$ and $\widetilde{u}_{n_k}(0) =u_{n_k0} $, $\TP$-a.s.
		            	\item $\widetilde{u}$ is a solution to the Problem \eqref{eqn: damped skdv} with respect to the stochastic basis $\lp \widetilde{\Omega}, \widetilde{\F}, \widetilde{\P}, (\TF_t)_{t \geq 0}, \widetilde{W}, \widetilde{\pi} \rp$ and $\widetilde{u}(0) =u_{0} $, $\TP$-a.s.
		            \end{enumerate}
		            Note that, a consequence of \eqref{as conv: weak feller}, is that for any $t\geq0$ we have
		            \begin{equation*}
		            	\Tu_{n_k}(t) \rightharpoonup \Tu(t) \text{ weakly in $H^2(\T)$, $\TP$-a.s.}
		            \end{equation*}
		             Moreover, due to the equivalence of laws of $\Tu_{n_k}$ and $ u_{n_k}$ in the space {$\mathcal{Y}_u$}, {and the Dominated Convergence Theorem}, one has for any $t\geq0$ that
		            \begin{equation} \label{weak feller: conv for u_n_k}
		            	P_t\phi(u_{n_k0}) = \E\phi(u_{n_k}(t)) = \TE \phi(\Tu_{n_k}(t)) \to \TE \phi (\Tu(t)), \text{ as $k \to \infty$}
		            	\end{equation}
		           Thus, to finish the proof of this lemma we need to show that
		           \begin{align}\label{weak feller: equal for u0}
		           	\TE \phi (\Tu(t)) = \E\phi(u(t)) = P_t\phi(u_0).
		           \end{align}
		         
		           Since, $\tilde u$ and $u$ both solve \eqref{eqn: damped skdv} emanating from the same initial condition $u_0$, it follows directly from uniqueness in law obtained in Corollary \ref{uniquelaw}:
		            \begin{equation} \label{weak feller: unique in law}
		            \mathcal{L}_{\P}(u) = \mathcal{L}_{\TP}(\Tu) \text{ on the space $\mathcal{Y}_u$}.
		            \end{equation}
		            Hence, we have established \eqref{weak feller: equal for u0}.
		We then apply pathwise uniqueness and a sub-subsequence argument to conclude the convergence for the whole sequence
		            \begin{equation*}
		            	P_t\phi(u_{n0}) \to P_t\phi(u_0) , \text{ as $n \to \infty$},
		            \end{equation*}
		            which completes the proof of Lemma \ref{lemma: weak feller}.
		        \end{proof}
	    \end{lemma}
Our next goal is to prove that $\{P_t\}_{t \geq 0}$ defined in \eqref{Pt} satisfies the second assumption (ii) in Theorem \ref{thm:MS}. 
    \begin{lemma} \label{lemma: tightness condition for invaraint measure}
        Let $\{P_t\}_{t \geq 0}$ be the transition semigroup associated to the equation \eqref{eqn: damped skdv}. Then for any $\varepsilon > 0$, there exists some $R = R(\varepsilon) > 0$ such that
        \begin{equation} \label{invariant: tight sequence requirement}
            \sup_{T \geq 1} \frac{1}{T} \int_0^T \P( |u(t;0)|_{H^2} > R )dt \leq \sup_{T \geq 1}   \frac{\frac{1}{T} \int_0^T \E|u(t;0)|_{H^2}^2 dt}{R^2} < \varepsilon.
        \end{equation}
    \end{lemma}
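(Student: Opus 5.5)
The first inequality in \eqref{invariant: tight sequence requirement} is Chebyshev's inequality applied pointwise in $t$. The real task is therefore a time-uniform bound
\[
\sup_{t\geq 0}\E|u(t;0)|_{H^2}^2 \le C_*,
\]
or at least a bound $\frac1T\int_0^T \E|u(t;0)|_{H^2}^2\,dt \le C_*$ uniformly in $T\ge 1$. Once this is available, we take $R^2 > C_*/\varepsilon$. We will obtain it by redoing the estimates of Theorems \ref{ape:thm1} and \ref{ape:thm3}, but keeping the damping terms $\gamma p\int|u|^p_{L^2}$ and $2\gamma\int|\partial_x^2u|^2_{L^2}$ on the left-hand side instead of discarding them. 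We will also work with the differential form of It\^o's formula (taking expectations, without $\sup_t$), so that the BDG terms disappear and the martingale terms have zero mean. As before, this is done at the Galerkin/regularized level and passed to the limit by lower semicontinuity, or directly for $u$ after localization with stopping times.

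\textbf{Step 1: uniform-in-time $L^2$ moments.} Applying It\^o to $|u|_{L^2}^p$ for $p=2,4,6,8$, the nonlinear and dispersive terms vanish. The It\^o corrections $I_3,I_4,I_5$ are bounded by $C_p(1+|u|^p_{L^2})$ using \eqref{assumption: growth of G}, \eqref{assumption: growth of K} and \eqref{assumption:interpolate K}. More precisely, they are bounded by $C_p\kappa(|u|^{p-2}+|u|^p)+C$, and Young's inequality absorbs the lower power. This gives
\[
\frac{d}{dt}\E|u|^p_{L^2} \le -(\gamma p - C_p)\E|u|^p_{L^2} + C_p.
\]
For $\gamma$ large this yields $\sup_t \E|u(t;0)|^p_{L^2}\le C$ for all $p\le 8$. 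Here largeness of $\gamma$ enters for the first time, because the It\^o correction constants grow linearly in $|u|^p$ with coefficients depending on $\kappa_1,\kappa_2$.

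\textbf{Step 2: uniform-in-time $H^2$ bound via $\mathcal{I}_2$.} Apply It\^o to $\mathcal{I}_2(u)$ and take expectations. The $dt$ drift coming from $\gamma u$ equals
\[
-\gamma\langle \mathcal{I}_2'(u),u\rangle = -2\gamma|\partial_x^2u|^2 + 5\gamma\int u(\partial_xu)^2 - \tfrac59\gamma|u|_{L^4}^4 .
\]
The KdV part contributes zero. The It\^o corrections $R_3,R_4$ are bounded, exactly as in the proof of Theorem \ref{ape:thm3}, by $C(1+|u|^6_{L^2}+|\partial_x^2u|^2_{L^2})$, where $C$ depends on $\kappa_1,\kappa_2$ but not on $\gamma$. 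The cubic term is handled via the $L_2$-type estimate: $5\gamma|\int u(\partial_xu)^2|\le \frac{\gamma}{2}|\partial_x^2u|^2 + C\gamma(1+|u|_{L^2}^8)$.

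The key structural point is to write the result as a Lyapunov inequality for $\mathcal{I}_2$ itself, not for $|\partial_x^2 u|^2$. We have $\mathcal{I}_2(u) \ge \frac12|\partial_x^2u|^2 - C(1+|u|^{8}_{L^2})$ and $\mathcal{I}_2(u)\le 2|\partial_x^2u|^2 + C(1+|u|^{8}_{L^2})$, using the $L_1$ estimate and the Gagliardo--Nirenberg bound on the quartic term. Hence
\[
\frac{d}{dt}\E\mathcal{I}_2(u) \le -(\tfrac{\gamma}{2}-C)\,\E|\partial_x^2u|^2 + C\gamma(1+\E|u|^8_{L^2}) \le -c\gamma\,\E\mathcal{I}_2(u) + C\gamma\bigl(1+\sup_s\E|u(s)|^8_{L^2}\bigr)
\]
for $\gamma\ge\gamma_0$. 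Gronwall's inequality with $\mathcal{I}_2(u(0))=0$, combined with Step 1, gives $\sup_t\E\mathcal{I}_2(u(t;0))\le C$. Consequently $\sup_t\E|u(t;0)|_{H^2}^2\le C$, with a constant independent of $T$, which is what is needed.

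\textbf{Main obstacle.} The delicate point is Step 2: ensuring that every term not controlled by the damping is bounded by $\delta|\partial_x^2u|^2$ with $\delta$ independent of $\gamma$, plus a quantity already bounded uniformly in time by Step 1. The damping-generated terms scale like $\gamma$, and they must be handled by the Lyapunov comparison with $\mathcal{I}_2$ rather than by absorption. The It\^o-correction constants do not scale with $\gamma$, which is why a sufficiently large $\gamma$ suffices. A secondary technical issue is justifying that the expected martingale terms vanish for the limit solution. We will handle this by performing the computation on $u^N_\eps$ (where all moments are finite) with $\eps\partial_x^4$ contributing nonpositively up to terms controlled as $L_3,L_4$, obtaining bounds uniform in $N,\eps,t$, and passing to the limit by weak lower semicontinuity using the convergences of Section \ref{Ntoinfty} together with uniqueness.
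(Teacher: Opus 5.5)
Your argument is correct, at the same level of rigor as the paper, and it uses the paper's ingredients. These are Chebyshev's inequality, It\^o's formula for $|u|^p_{L^2}$ and for $\mathcal{I}_2(u)$ with the damping terms kept, the It\^o corrections estimated as in Theorem~\ref{ape:thm3}, and $\gamma$ large enough to dominate them. The difference is in how you close the estimates.

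The paper stays with the integrated identities and accepts linear growth in time. Its $L^2$ step gives only $\E|u(t)|^p_{L^2}\le |u_0|^p_{L^2}+Ct$. In the $H^2$ step it bounds the final-time cubic term by $\tfrac12\E|\partial_x^2u(t)|^2_{L^2}+C\E|u(t)|^8_{L^2}$ and keeps $\eta''\int_0^t\E|\partial_x^2u|^2_{L^2}\,ds$ on the left. This gives $\int_0^t\E|u(s;0)|^2_{H^2}\,ds\le C(t+1)$, which is exactly the Ces\`aro bound the lemma needs.

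You instead use the differential inequalities with exponential Gronwall. You first get $\sup_t\E|u(t)|^p_{L^2}<\infty$, then treat $\mathcal{I}_2$ as a Lyapunov function through its two-sided comparison with $|\partial_x^2u|^2_{L^2}$, up to $C(1+|u|^8_{L^2})$. This costs a little more but yields the stronger statement $\sup_{t\ge0}\E|u(t;0)|^2_{H^2}<\infty$. So the laws of $u(t;0)$ are bounded in probability in $H^2$ uniformly in $t$, not only on time average.

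Your claim that the $\gamma$-generated terms \emph{must} be handled by the Lyapunov comparison rather than by absorption is an overstatement. You yourself absorb $5\gamma\int u(\partial_xu)^2$ into $-2\gamma|\partial_x^2u|^2_{L^2}$, and the paper shows that absorption alone suffices for the time-averaged bound. The comparison is needed only for the pointwise-in-time upgrade.

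One caution about your justification step. In the Galerkin system the nonlinearity is $P_N(u\partial_xu)$, and $\mathcal{I}_2$ is not conserved by the projected flow. For $u\in H_N$, the KdV contribution reduces, up to sign and the factor $\theta_N$, to $\langle (I-P_N)\mathcal{I}_2'(u),(I-P_N)(u\partial_xu)\rangle_{L^2}$. This does not vanish identically: take $N=2$ and $u$ a small multiple of $\sin x+2\sin 2x$. Then only the $\tfrac59u^3$ part of $\mathcal{I}_2'(u)$ contributes, and the pairing is nonzero.

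So ``the KdV part contributes zero'' holds for the limit equation but not for $u^N_\eps$. The paper's Theorem~\ref{ape:thm3} asserts the same cancellation for the Galerkin system, writing $u\partial_xu$ in place of $P_N(u\partial_xu)$, so this does not put you behind the paper. A fully rigorous version, however, must either control that commutator term or run the $\mathcal{I}_2$ computation on an approximation that keeps the exact cancellation, such as the $\eps$-regularized solution itself.
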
 
To prove Lemma \ref{lemma: tightness condition for invaraint measure}, we mus show that $\displaystyle\int_0^t \E|u(s; u_0)|^2_{H^2} ds$ is $O(t)$ as $t\to\infty$. This will be done in the following two results i.e. Theorems \ref{thm: time-uniform L^2} and \ref{thm: time-uniform H^2} where we establish the desired linear-in-time estimates first in $L^2(\T)$ and then in $H^2(\T)$ respectively. The proof of Lemma \ref{lemma: tightness condition for invaraint measure} is delayed.  
    \begin{theorem} \label{thm: time-uniform L^2}
        Consider any $u_0 \in H^2(\T)$ and let $u(t) := u(t;u_0)$ be the unique pathwise solution to \eqref{eqn: damped skdv} with initial condition $u_0$. Then for any $2 \leq p \leq 8$, we have
            \begin{align}  \label{invariant: L^2}
            	\E |u(t)|_{L^2}^p + \eta{(p)} \int_0^t \E|u(s)|_{L^2}^{p} ds & \leq |u_0|_{L^2}^p +  \tilde{C}_p \kappa_2 \cdot t, 
            \end{align}
        where 
         \begin{equation} \label{invariant measure: gamma requirement 1}
        	{ \eta(p)} := \gamma p - \big( p(p-1)\kappa_1 + 3 \tilde{C}_p \kappa_2 \big),\quad \tilde{C}_p := (p^2-p)2^{p-3} > 0.
        \end{equation}
Consequently, if $\gamma$ is sufficiently large so that $\eta$ defined in \eqref{invariant measure: gamma requirement 1} is positive, then we have
        \begin{align}  \label{invariant: L^2 linear}
        	\E |u(t)|_{L^2}^p + \eta{(p)} \int_0^t \E|u(s)|_{L^2}^{p} ds & \leq C(t+1),
        \end{align}
        where $C = C(\kappa_1, \kappa_2, \gamma, \tilde{C}_p, u_0) > 0$.
            \end{theorem}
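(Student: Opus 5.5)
We apply It\^o's formula to $\tilde{\mathcal I}_0(u)=|u|_{L^2}^p$ along the solution $u(t)=u(t;u_0)$ of \eqref{eqn: damped skdv}, exactly as in the proof of Theorem \ref{ape:thm1}. Now there is no Galerkin projection, cutoff or viscosity, and the identity is justified by the regularity $u\in L^2(\Omega;L^\infty(0,T;H^2(\T)))$. The dispersive term and the nonlinear term drop out by the integrations by parts already used there: $(u,\partial_x^3u)_{L^2}=0$ and $(u,u\partial_xu)_{L^2}=0$.

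Rather than taking $\E\sup_t$ and using BDG, we take plain expectations at a fixed time $t$. This kills the two martingale terms, namely the Wiener integral and the compensated Poisson integral. To make this rigorous we first localize with the stopping times $\sigma_n=\inf\{t:|u(t)|_{L^2}>n\}$, so that the integrands are square integrable, and then let $n\to\infty$ by Fatou's lemma together with the moment bound of Theorem \ref{ape:thm1}. This leaves
\begin{equation*}
\E|u(t)|_{L^2}^p+\gamma p\int_0^t\E|u(s)|_{L^2}^p\,ds=|u_0|_{L^2}^p+\E\int_0^t\bigl(I_3+I_4+I_5\bigr)\,ds,
\end{equation*}
where $I_3,I_4$ are the It\^o correction terms coming from $G$ and $I_5$ is the $\nu$-integral of the jump remainder. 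The constants in the bounds on these terms must be tracked explicitly, since they determine $\eta(p)$.

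The explicit bounds are as follows. By \eqref{assumption: growth of G},
\begin{equation*}
\tfrac p2|u|^{p-2}\|G(u)\|^2_{HS}+\tfrac{p(p-2)}2|u|^{p-4}\|G^*(u)u\|^2_{U_0}\le \tfrac{p(p-1)}{2}\kappa_1\,|u|^{p-2}(1+|u|^2).
\end{equation*}
For the jump remainder we use \eqref{ineq} with constant $\tilde C_p$, together with \eqref{assumption: growth of K} and \eqref{assumption:interpolate K}. To keep the constant equal to $\kappa_2$ rather than a H\"older-generated $C$, we should use the stated form of the $p$-th moment bound; otherwise we interpolate and absorb a harmless factor. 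This gives
\begin{equation*}
\tilde C_p\int_{E_0}\bigl(|u|^{p-2}|K|^2+|K|^p\bigr)\,d\nu\le \tilde C_p\kappa_2\bigl(|u|^{p-2}+|u|^p+1+|u|^p\bigr).
\end{equation*}
The lower-order powers $|u|^{p-2}$ are handled by Young's inequality, $|u|^{p-2}\le 1+|u|^p$. Collecting everything, the coefficient of $|u|^p$ on the right is at most $p(p-1)\kappa_1+3\tilde C_p\kappa_2$, and the constant remainder is of size $\tilde C_p\kappa_2$ (plus a $\kappa_1$ contribution, which we absorb into the constant as the statement intends). Moving the $|u|^p$ terms to the left gives \eqref{invariant: L^2} with $\eta(p)=\gamma p-\bigl(p(p-1)\kappa_1+3\tilde C_p\kappa_2\bigr)$. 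Once $\eta(p)>0$, estimate \eqref{invariant: L^2 linear} follows immediately, with $C=\max\bigl(|u_0|^p_{L^2},\tilde C_p\kappa_2\bigr)$.

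The main obstacle is bookkeeping rather than analysis. The constants must match the claimed $\eta(p)$ exactly; in particular, the $|u|^{p-2}$ cross terms must be absorbed without introducing $\gamma$-independent multiples of $|u|^p$ beyond those displayed. The other delicate point is justifying that the stochastic integrals have zero expectation uniformly in time, which is where the localization argument and the $L^8$ moment bound are needed.
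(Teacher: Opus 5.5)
Your proposal follows the paper's proof. The steps match: It\^o's formula for $|u|_{L^2}^p$, plain expectations to remove the two martingale terms, \eqref{assumption: growth of G} for the two It\^o corrections, and \eqref{ineq} with \eqref{assumption: growth of K}--\eqref{assumption: further growth of K} for the jump remainder. This gives the same coefficient $p(p-1)\kappa_1+3\tilde C_p\kappa_2$ in $\eta(p)$, and your localization by $\sigma_n$ supplies a justification that the paper only asserts.

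The one thing to correct is your final step. After $|u|^{p-2}\le 1+|u|^p$, your own bounds leave the additive term $\bigl(\tfrac{p(p-1)}{2}\kappa_1+2\tilde C_p\kappa_2\bigr)t$, not $\tilde C_p\kappa_2\,t$. So your $C$ should be $\max\bigl(|u_0|_{L^2}^p,\tfrac{p(p-1)}{2}\kappa_1+2\tilde C_p\kappa_2\bigr)$.

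This larger constant is actually the right one. The paper reaches $\tilde C_p\kappa_2\,t$ only by bounding $|u|^{p-2}(1+|u|^2)$ by $2|u|^p$, which fails when $|u|_{L^2}<1$. In fact \eqref{invariant: L^2} as displayed is false in general. Take $p=2$, $K\equiv0$, $u_0=0$, and a constant $G$ with values in spatially constant functions and $\|G\|_{HS}^2=\kappa_1>\kappa_2$. Then $\E|u(t)|_{L^2}^2=\kappa_1 t+O(t^2)$, which exceeds the claimed right-hand side $\kappa_2 t$ for small $t$. The consequence \eqref{invariant: L^2 linear}, and everything downstream of it, is unaffected.
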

        \begin{proof}
            Apply It\^{o}'s formula to equation \eqref{eqn: damped skdv} with respect to the functional $\tilde{\mathcal{I}}_0(v) = |v|_{L^2}^p$ as in the proof of Theorem \ref{ape:thm1}, and take expectation, we obtain 
            \begin{equation} \label{invariant: ito L2}
                \begin{aligned}
                    & \E |u(t)|_{L^2}^p + \gamma p \int_0^t \E|u(s)|_{L^2}^{p} ds =  |u(0)|_{L^2}^p  + \frac{p}{2} \E \int_0^t |u(s)|_{L^2}^{p-2} \|G(u(s))\|_{HS(U_0, L^2)}^2 ds \\
                    & + \frac{p(p-2)}{2}\E \int_0^t |u(s)|_{L^2}^{p-4}\|G^*(u(s)) u(s)\|^2_{U_0}ds \\
                    & + \E \int_0^t \int_{E_0} \lp |u(s) + K(u(s), \xi)|_{L^2}^p - |u(s)|_{L^2}^p  - p |u(s)|_{L^2}^{p-2}(u(s), K(u(s), \xi))  \rp d\nu(\xi) ds \\
                    & =: |u(0)|_{L^2}^p + M_1 + M_2 + M_3,
                \end{aligned}
            \end{equation}
            where $G^*(u)$ is the adjoint operator of $G(u)$. We note that two mean-zero martingale terms vanish when we take the expectation. We will give bounds for each $M_j; j = 1,2,3$. For the term $M_1$ we observe, due to the growth condition assumption \eqref{assumption: growth of G} on $G$ and Holder inequality, that
            \begin{equation} \label{invariant: M1}
                \begin{aligned}
                \frac{p}{2} \E \int_0^t |u(s)|_{L^2}^{p-2} \|G(u(s))\|_{HS(U_0, L^2)}^2 ds 
                 \leq \frac{p}{2}\kappa_1 \E \int_0^t |u(s)|_{L^2}^{p-2}(|u(s)|_{L^2}^2 + 1) ds 
                \leq p \kappa_1 \int_0^t \E|u(s)|_{L^2}^p ds
                \end{aligned}
            \end{equation}
            Similarly, we obtain for $M_2$ that
            \begin{equation}
                \begin{aligned}\label{invariant: M2}
                M_2 &  \leq p(p-2)\kappa_1 \int_0^t \E|u(s)|_{L^2}^p ds
                \end{aligned}
            \end{equation}
            To estimate $M_3$, we utilize the inequality \eqref{ineq}, the growth condition \eqref{assumption: further growth of K}, and Holder's inequality as follows
            \begin{equation} \label{invariant: M3}
                \begin{aligned} 
                M_3 & = \E \int_0^t \int_{E_0} \biggl( |u(s) + K(u(s), \xi)|_{L^2}^p - |u(s)|_{L^2}^p  - (u(s), K(u(s), \xi))  \biggr) d\nu(\xi) ds \\
                & \leq \tilde{C}_p \E \int_0^t \int_{E_0}  \biggl( |u(s)|_{L^2}^{p-2}|K(u(s), \xi)|_{L^2}^2 + |K(u(s), \xi)|_{L^p}^p  \biggl) d\nu ds \\
                & \leq \tilde{C}_p \kappa_2 \E \int_0^t  \biggl( |u(s)|_{L^2}^{p-2}(1+|u(s)|_{L^2}^2) + (1+|u(s)|_{L^2}^p)  \biggl) ds \\
                & \leq \tilde{C}_p \kappa_2 \int_0^t \bigg(3\E|u(s)|_{L^2}^p + 1\bigg) ds = 3 \tilde{C}_p \kappa_2\int_0^t  \E |u(s)|_{L^2}^p ds + \tilde{C}_p \kappa_2 \cdot t
                \end{aligned}
            \end{equation}
            Therefore, substuting \eqref{invariant: M1}, \eqref{invariant: M2} and \eqref{invariant: M3} back to \eqref{invariant: ito L2} yields
            \begin{equation}
                \begin{aligned}
                \E |u(t)|_{L^2}^p + \gamma p \int_0^t \E|u(s)|_{L^2}^{p} ds \leq \E|u_0|_{L^2}^p + \lp p(p-1)\kappa_1 + 3 \tilde{C}_p \kappa_2 \rp \int_0^t  \E|u(s)|_{L^2}^p ds +  \tilde{C}_p \kappa_2 \cdot t.
                \end{aligned} 
            \end{equation}
            Rearranging the terms gives \eqref{invariant: L^2}. This finishes the proof of Theorem \ref{thm: time-uniform L^2}.
        \end{proof}
        We will upgrade the bounds in Theorem \ref{thm: time-uniform L^2} and obtain linear-in-time bounds in more regular spaces in the following Theorem.
    \begin{theorem} \label{thm: time-uniform H^2}
       Let $u_0 \in H^2(\T)$ be a deterministic function and $u(t) := u(t;u_0)$ be the unique pathwise solution to \eqref{eqn: damped skdv} with initial condition $u_0$. Suppose $\gamma $ is chosen large to ensure that $\eta(8)>0$ where $\eta$ defined in \eqref{invariant measure: gamma requirement 1}. Then we have 
        \begin{equation} \label{invariant: H2}
        	\E|\partial_x^2 u(t)|_{L^2}^2 + \eta'' \int_0^t \E|\partial_x^2 u(s)|^2_{L^2} ds \leq C (t+1),
        \end{equation}
        where 
        	$\eta''$ is a positive constant independent of $p$, given by
        \begin{equation}
        	\eta'' := \frac{197}{50}\gamma - \kappa_1 - \kappa_2 > 0,
        \end{equation}
        and $C = C(\kappa_1, \kappa_2, \gamma, u_0) > 0$.
           \end{theorem}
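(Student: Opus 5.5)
The plan is to mirror the proof of Theorem \ref{ape:thm3}, but working with expectations only (no suprema in time) and keeping track of how $\gamma$ enters. I would apply It\^o's formula to $\mathcal{I}_2(u(t))$ for the solution of \eqref{eqn: damped skdv}. Rigorously this is done on the Galerkin level and then passed to the limit, or through stopping times and lower semicontinuity, since the estimates are uniform in $N,\eps$. Taking expectations removes the two martingale terms $R_1,R_2$. The transport term $\partial_x^3u+u\partial_xu$ drops out by the conservation identity, exactly as before.

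For the damping term, $\langle\mathcal{I}_2'(u),\gamma u\rangle$ equals $2\gamma|\partial_x^2u|^2_{L^2}-5\gamma\int u(\partial_xu)^2+\frac{5}{9}\gamma|u|^4_{L^4}$. The quartic term has a good sign, so I would simply drop it. This gives
\begin{equation*}
\E\mathcal{I}_2(u(t))+2\gamma\int_0^t\E|\partial_x^2u|^2_{L^2}ds\le\mathcal{I}_2(u_0)+5\gamma\int_0^t\E\Big|\int_\T u(\partial_xu)^2dx\Big|ds+\int_0^t\E(R_3+R_4)\,ds.
\end{equation*}
I would then estimate each remaining piece by a small multiple of $|\partial_x^2u|_{L^2}^2$ plus a polynomial in $|u|_{L^2}$ of degree at most $8$.

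The cubic term $5\gamma\int u(\partial_xu)^2$ is handled as $L_1$ was: integrate by parts, then use $H^{1/4}\hookrightarrow L^4$ and interpolation. This bounds it by $\delta\gamma|\partial_x^2u|^2+C_\delta\gamma(1+|u|_{L^2}^8)$ with $\delta$ small, say $\delta=1/100$. For the It\^o correction terms $R_{3,j},R_{4,j}$ I reuse the bounds already proved. The leading pieces $R_{3,1},R_{4,1}$ contribute exactly $\kappa_1(1+|u|_{H^2}^2)$ and $\kappa_2(1+|u|_{H^2}^2)$. All the other pieces contribute $\delta|\partial_x^2u|^2+C(1+|u|^8_{L^2})$, with $C$ depending on $\kappa_1,\kappa_2$. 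Collecting these, the coefficient in front of $\int\E|\partial_x^2u|^2$ becomes something like $2\gamma-\delta\gamma-\kappa_1-\kappa_2$, which is where the stated $\eta''$ comes from, after bookkeeping of the constants. The leftover forcing is $C\int_0^t(1+\E|u|_{L^2}^8)ds$. By Theorem \ref{thm: time-uniform L^2} with $p=8$ and $\eta(8)>0$, this forcing is $\le C(t+1)$.

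It remains to convert $\E\mathcal{I}_2(u(t))$ back into $\E|\partial_x^2u(t)|^2_{L^2}$. The lower-order parts of $\mathcal{I}_2$ are bounded, using $L_1$-type estimates, by $\frac18|\partial_x^2u|^2+C(1+|u|_{L^2}^8)$. The term $\E|u(t)|^8_{L^2}$ is $\le C(t+1)$, again by \eqref{invariant: L^2 linear}. Likewise $\mathcal{I}_2(u_0)$ is a finite constant, since $u_0\in H^2$ is deterministic. This yields \eqref{invariant: H2}, possibly with a factor such as $\frac78$ in front of $\E|\partial_x^2u(t)|^2$, which can be absorbed into $C$.

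The main obstacle is keeping the $|\partial_x^2u|^2$ coefficient genuinely proportional to $\gamma$ while every other contribution stays either $\gamma$-small or $\gamma$-independent. In particular, the Gagliardo--Nirenberg absorptions must not produce $\gamma$-dependent powers that break positivity. The other delicate point is that no Gronwall step is allowed: any term $C\int\E|\partial_x^2u|^2$ with a large constant $C$ must be dominated by $\gamma$. This is the reason for the largeness requirement on $\gamma$.
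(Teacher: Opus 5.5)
Your proposal is correct and follows the paper's proof essentially step by step: It\^o's formula for $\mathcal{I}_2$ in expectation, cancellation of the transport term, the damping contribution $2\gamma|\partial_x^2u|_{L^2}^2-5\gamma\int_\T u(\partial_xu)^2\,dx+\frac59\gamma|u|_{L^4}^4$, absorption of the cubic and It\^o-correction terms with $\gamma$-small coefficients (only the leading $\kappa_1,\kappa_2$ pieces are not small), and the linear-in-time $L^8$ bound of Theorem~\ref{thm: time-uniform L^2} in place of Gronwall. The only mismatch is bookkeeping: the factor $\frac{197}{50}=2\cdot\frac{197}{100}$ arises because the paper absorbs half of $\E|\partial_x^2u(t)|_{L^2}^2$ in the endpoint cubic term and then multiplies through by $2$, whereas your $\frac18$ absorption gives only about $\frac{16}{7}\gamma$ in front of the time integral, so you should absorb a larger fraction (any fraction below $1$ is admissible, at the price of a larger $C$) to reach the stated $\eta''$.
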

        \begin{proof}
            We begin with recalling the definition of the functional $\mathcal{I}_2(v)$,
           	\begin{equation*}
           		\mathcal{I}_2(v) = \int_\T \biggl(  (\partial_x^2 v(x))^2 ds - \frac{5}{3} v(x)(\partial_x v(x))^2 + \frac{5}{36}(v(x))^4 \biggr) dx.
           	\end{equation*}
            Applying It\^{o}'s formula to \eqref{eqn: damped skdv} with the functional  $\mathcal{I}_2(u)$, and taking expectation both sides, we obtain
            \begin{equation} \label{invariant: ito H2}
                \begin{aligned}
                   & \E|\partial_x^2 u(t)|_{L^2}^2 + \frac{5}{36}  \E|u(t)|_{L^4}^4  + 2\gamma \E \int_0^t |\partial_x^2 u(s)|_{L^2}^2 ds + \frac{5}{9} \E\int_0^t |u(s)|_{L^4}^4 ds \\
                  & = \E  \mathcal{I}_2(u_0) +  \frac{5}{3} \E \int_\T u(t,x) (\partial_x u(t,x))^2 dx + 5\gamma \E \int_0^t \int_\T u(s,x)  (\partial_x u(s,x))^2 dx ds \\
                   & +\frac{1}{2} \E \int_0^t \text{Tr}(\mathcal{I}_2''(u(s)) G(u(s)) G^*(u(s))) ds \\
                    & + \E \int_0^t\int_{E_0} \biggl( \mathcal{I}_2(u(s)+ K(u(s),\xi)) - \mathcal{I}_2(u(s)) - (\mathcal{I}_2'(u(s)), K(u(s), \xi))_{L^2} \biggr) d\pi(\xi, s)\\
                    & =: \E \mathcal{I}_2(u_0) + D_1 + D_2 + D_3 + D_4
                \end{aligned}
            \end{equation}
            We will give estimates to $D_j, j = 1,2,3, 4$. We first deal with $D_1$ term. Using Sobolev embedding $H^{1/4}(\T) \hookrightarrow L^4(\T)$, Sobolev interpolation $H^{5/4}(\T) = [H^2(\T), L^2(\T)]_{5/8}$, Young's inequality and Holder's inequality, we get
            \begin{equation*}
                \begin{aligned}
                    D_1 & = \E \int_\T u(t,x) (\partial_x u(t,x))^2 dx \leq \E \biggl[ |u(t)|_{L^2}|\partial_x u(t)|_{L^4}^2 \biggr] \leq \E \biggl[ |u(t)|_{L^2}|u(t)|_{H^{5/4}}^2 \biggr]\\
                    & \leq C\E \biggl[ |u(t)|_{L^2} \bigl( |u(t)|_{L^2}^2 + |\partial_x^2 u(t)|_{L^2}^{1/4} |u(t)|_{L^2}^{7/4} \bigr) \biggr] \leq \frac{1}{2}\E|\partial_x^2 u(t)|_{L^2}^2 + C\E|u(t)|_{L^2}^8.
                \end{aligned}
            \end{equation*}
            Similarly, by adjusting the constant when invoking Young's inequality, we obtain
            \begin{equation*}
                \begin{aligned}
                    D_2 = \E \int_0^t \int_\T u(s,x) (\partial_x u(s,x))^2 dx ds \leq \frac{\gamma}{100}\int_0^t \E|\partial_x^2 u(s)|_{L^2}^2 ds + C\int_0^t \E|u(s)|_{L^2}^8  ds
                \end{aligned}
            \end{equation*}
            Combining above estimates for $D_1$ and $D_2$, thanks to Theorem \ref{invariant: L^2}, we have
            \begin{equation} \label{d1+d2}
                \begin{aligned}
               D_1 + D_2 & \leq \frac{1}{2}\E|\partial_x^2 u(t)|_{L^2}^2 + \frac{\gamma}{100}\int_0^t \E|\partial_x^2 u(s)|_{L^2}^2 ds + C\lp \E|u(t)|_{L^2}^8 + \int_0^t \E|u(s)|_{L^2}^8 ds\rp \\
               & \leq \frac{1}{2}\E|\partial_x^2 u(t)|_{L^2}^2 + \frac{\gamma}{100}\int_0^t \E|\partial_x^2 u(s)|_{L^2}^2 ds + C (t+1) 
                \end{aligned}
            \end{equation}
            For $D_3$ and $D_4$, one can follow the arguments in estimating \eqref{r3} and \eqref{R4} in the proof of Theorem \ref{ape:thm3},  and apply Theorem \ref{invariant: L^2}, to conclude that 
            \begin{equation} \label{d3-d4}
                \begin{aligned}
                    D_3 & \leq{ \lp \frac{\kappa_1}{2} + \frac{\gamma}{100}\rp}\int_0^t \E |\partial_x^2 u(s)|_{L^2}^2 ds + C\int_0^t( \E|u(s)|_{L^2}^8 + 1) ds \leq  \lp \frac{\kappa_1}{2} + \frac{\gamma}{100}\rp\int_0^t \E |\partial_x^2 u(s)|_{L^2}^2 ds + C(t+1)\\
                    D_4 & \leq{  \lp \frac{\kappa_2}{2} + \frac{\gamma}{100}\rp}\int_0^t \E |\partial_x^2 u(s)|_{L^2}^2 ds + C\int_0^t( \E|u(s)|_{L^2}^8 + 1) ds \leq \lp \frac{\kappa_2}{2} + \frac{\gamma}{100}\rp\int_0^t \E |\partial_x^2 u(s)|_{L^2}^2 ds + C(t+1)
                \end{aligned}
            \end{equation}
            Thus, plugging \eqref{d1+d2} and \eqref{d3-d4} back to \eqref{invariant: ito H2}, we arrive at
            \begin{equation}
                \begin{aligned}\label{lineartH^2}
                    \frac{1}{2} \E|\partial_x^2 u(t)|_{L^2}^2 + \lp \frac{197}{100}\gamma - \frac{\kappa_1}{2} - \frac{\kappa_2}{2} \rp\int_0^t \E|\partial_x^2 u(s)|^2_{L^2} ds \leq C(t+1).
                \end{aligned}
            \end{equation}
            This completes our proof for \eqref{invariant: H2}.
        \end{proof}
 
As explained earlier, thanks to the bounds obtained in Theorem \ref{thm: time-uniform H^2}, the proof of Lemma \ref{lemma: tightness condition for invaraint measure} is straightforward.
  \begin{proof}[Proof of Lemma \ref{lemma: tightness condition for invaraint measure}] 
    Now let $\gamma$ be large enough so that $\eta$ defined in \eqref{invariant measure: gamma requirement 1} is positive for $p = 8$. Then by Theorem \ref{thm: time-uniform L^2}, Theorem \ref{thm: time-uniform H^2} and the norm equivalence \eqref{Hs norm equivalent}, we have that
    \begin{equation} \label{invariant: H2 linear}
        \int_0^t \E|u(s; 0)|_{H^2}^2 ds  \leq C(t+1),
    \end{equation}
    where $C = C(\kappa_1, \kappa_2, \gamma)$. For each $\veps > 0$, we could choose $R = R(\veps)$ sufficiently large such that
    \begin{equation} \label{invariant: R large}
        \frac{2C}{R^2} < \veps.
    \end{equation}
    It follows from Chebyshev's inequality, \eqref{invariant: H2 linear} and \eqref{invariant: R large} that
    \begin{equation}
        \sup_{T \geq 1}\frac{1}{T}\int_0^T \P( |u(t;0)|_{H^2} > R ) \leq \sup_{T \geq 1} \frac{\frac{1}{T} \int_0^T \E|u(t;0)|_{H^2}^2}{R^2} < \frac{2C}{R^2} < \veps,
    \end{equation}
    which completes the proof for Lemma \ref{lemma: tightness condition for invaraint measure}. 
        \end{proof}
We can now verify that the two conditions stated in Theorem \ref{thm:MS} are statisfied due to the results obtained in Lemma \ref{lemma: weak feller} and Lemma \ref{lemma: tightness condition for invaraint measure}. Thus we conclude the proof of our second main result Theorem \ref{Thm: summary invariant} i.e. that of the existence of an invariant measure for the transition semigroup $\{P_t\}_{t \geq 0}$ for the dynamics of the stochastic damped KdV equations \eqref{eqn: damped skdv}.

\appendix

\section{Proof of Convergence Lemma \ref{cor:L^1 convergence of K}}\label{section:appendix}
In this Appendix, we provide a proof of Lemma \ref{cor:L^1 convergence of K} in an abstract setting. We begin the section by recalling the framework for stochastic integration with respect to compensated Poisson random measures $\widehat{\pi}$ (see, e.g., \cite{CTT18compare, ikeda2014stochastic}).

We assume that the filtered probability space $(\Omega, \mathcal{F}, (\mathcal{F}_t)_{t \in [0,T]}, \P)$ satisfies the usual conditions. We denote by $\mathcal{P}_{[0,T]}$ the predictable $\sigma$-algebra on the product space $\Omega \times [0,T]$. \\
{Now we recall our earlier notation and assumptions: Let $U$ be a real, separable Hilbert space and let $E := U \setminus \{0\}$. We can endow a metric $d$ on $E$ so that it is complete and separable as described in  \cite[Example A.8]{CTT18euler}. Under this particular metric, a subset $B \subset E$ is bounded if and only if $B$ is separated from $0$. We denote by $\mathcal{E}$ the Borel $\sigma$-algebra of ${E}$. We assume that for some measure $\nu$ (e.g. the L\'evy measure), the space $(E,\mathcal{E},\nu)$ is measurable. We do not assume $\nu(E)$ to be finite, but assume that $\nu$ is finite on bounded subsets of $E$. By partitioning $E$ as follows,
\begin{equation} \label{appendix:sigma finite}
	E = \bigcup_{m\in \N} \biggl\{ x \in E: m^{-1} < |x|_U < m  \biggr\} =: \bigcup_{m \in \N} A_m,
\end{equation}
and noting that $\nu(A_m) < \infty$ for each $m \in \N$, we see that $(E, \mathcal{E}, \nu)$ is $\sigma$-finite. 
We further define $$E_0 := U \setminus \{ x \in U: 0 < |x|_U <1 \},$$ so that we have $\nu(E \setminus E_0) < \infty$. } \\
Let $\Xi$ be a stationary Poisson point process on $E$ with a countable domain $\mathfrak{D}(\Xi) \subset (0, \infty)$ and intensity measure given by $dt \otimes d\nu$. The process $\Xi$ naturally induces a Poisson random measure. In this Section, we will  consider Poisson random measures on $[0,T]\times E$ induced by stationary point processes with intensity $dt \otimes d\nu$.

Next we discuss the space for the integrands for stochastic integrals with respect to Poisson random measures induced by such $\Xi$. Let $V$ be a real, separable Hilbert space. For any $q \in [1, \infty]$, we define the space of predictable, $L^q$-integrable processes as
$$\mathbf{F}_{\nu,T}^q(V) := L^q(\Omega \times [0,T] \times E, \mathcal{P}_{[0,T]} \otimes \mathcal{E}, d\P \otimes dt \otimes d\nu; V).$$
For recall that for any $g\in \mathbf{F}_{\nu,T}^1(V)$, we have
$$	I^{{\pi}}_t(g):=\int_{(0,t]} \int_E g(s, \xi) \, d\pi(s, \xi)=\sum_{s \in (0,t] \cap \mathfrak{D}(\Xi)} g(s, \Xi(s)), $$
whereas for the following stochastic integral with respect to the compensated measure defined as $\widehat{\pi}:=\pi-\nu$ is a $V$-valued square-integrable martingale for integrands $g\in \mathbf{F}_{\nu,T}^2(V)$:
\begin{equation}
	I^{\widehat{\pi}}_t(g) := \int_0^t \int_{E_0} g(s,\xi) d\widehat{\pi}(s, \xi).
\end{equation}

\begin{theorem} \label{lemma:convergence appendix}
	For every $N\in\mathbb{N}$, let $K_N, K \in \mathbf{F}^2_{\nu, T}(V)$, and let $\pi_N, \pi$ be Poisson random measures on $[0,T] \times E$ each with intensity $dt\otimes d\nu$. Assume that the following convergences hold $\P$-a.s.,
	\begin{align*}
		K_N & \to K \text{ in $L^2([0,T] \times E_0; dt \otimes d\nu; V)$}, \\
		\pi_N & \to \pi \text{ in $\mathcal{N}_{[0,T] \times E}^{\#}$}.
	\end{align*} 
	Then, up to a subsequence, the sequence of $ L^2(0, T; V)$-valued processes $ \left( I_t^{\widehat{\pi}_N}(K_N) \right)_{t \in [0,T]}$ converges almost surely to the $ L^2(0, T; V)$-valued process $ \left(  I_t^{\widehat{\pi}}(K) \right)_{t \in [0,T]} $.
\end{theorem}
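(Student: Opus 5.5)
We prove the convergence by cutting the integrand and the mark space into a part where weak-$\#$ convergence of counting measures applies directly and a remainder that is uniformly small in $L^2$.

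\textbf{Step 1: approximation.} Fix $\delta>0$. Using the $\sigma$-finite decomposition \eqref{appendix:sigma finite}, choose $m$ such that the contribution of $E_0\setminus \bigcup_{j\le m}A_j$ to the $L^2(dt\otimes d\nu)$ norm of $K$ is small. On the bounded set $B_m := E_0\cap\bigcup_{j\le m}A_j$, approximate $K$ (in $L^2(\Omega\times[0,T]\times E_0;V)$) by a predictable integrand $\Phi$. This $\Phi$ should be bounded, continuous in $(s,\xi)$, supported in $[0,T)\times B_m$, and take values in a finite-dimensional subspace of $V$. One way to build it is to take finite sums of the form $\sum_i \mathbf 1_{F_i}(\omega)\varphi_i(s,\xi)v_i$ with $F_i\in\F_{s_i}$, with the time variable restricted to $(s_i,s_{i+1}]$ and then mollified. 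We then decompose
\[
I^{\widehat\pi_N}_t(K_N)-I^{\widehat\pi}_t(K)=I^{\widehat\pi_N}_t(K_N-\Phi)+\bigl(I^{\widehat\pi_N}_t(\Phi)-I^{\widehat\pi}_t(\Phi)\bigr)+I^{\widehat\pi}_t(\Phi-K).
\]

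\textbf{Step 2: the outer terms.} Apply the It\^o isometry (or Doob's inequality) for each Poisson measure; this is valid because each $\pi_N$ has intensity $dt\otimes d\nu$. It gives
\[
\E\|I^{\widehat\pi_N}(K_N-\Phi)\|^2_{L^2(0,T;V)}\le T\,\E\|K_N-\Phi\|^2_{L^2([0,T]\times E_0;V)}.
\]
Almost sure convergence $K_N\to K$ is only pathwise, so we upgrade it. Either we use a uniform moment bound (here the $H^2$ growth bounds of Corollary \ref{cor:skorohod estimates}) with Vitali's Lemma \ref{lem:vitali}, or we localize with a stopping time and apply dominated convergence. Either way we get $\E\|K_N-K\|^2\to0$. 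Hence both outer terms are $\le C\delta$ in $L^2(\Omega;L^2(0,T;V))$, uniformly for large $N$.

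\textbf{Step 3: the middle term.} For fixed $\Phi$, split it as
\[
\int_{(0,t]}\!\int \Phi\,d\pi_N-\int_{(0,t]}\!\int\Phi\,d\pi \quad\text{minus}\quad \int_0^t\!\int\Phi\,d\nu\,ds-\int_0^t\!\int\Phi\,d\nu\,ds .
\]
The compensator parts are identical and cancel. For the counting parts, fix $\omega$ and $t$. The function $\Phi(\omega,\cdot)\mathbf 1_{(0,t]}$ is bounded with bounded support, but it is discontinuous at $s=t$. Weak-$\#$ convergence \eqref{weak hash conv} therefore gives convergence for every $t$ except those at which $\pi$ charges $\{t\}\times B_m$, which is a countable set. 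The atoms of $\pi_N$ converge to the atoms of $\pi$ in a bounded region, since there are finitely many points and counting measures converge pointwise with multiplicity. The sums $\sum\Phi(s,\Xi_N(s))$ are dominated by $\|\Phi\|_\infty\,\pi_N([0,T]\times B_m)$, and this is bounded in $N$. Dominated convergence in $t$ then yields convergence in $L^2(0,T;V)$, almost surely.

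\textbf{Step 4: conclusion and main obstacle.} Combining the steps gives $\limsup_N\E\|I^{\widehat\pi_N}(K_N)-I^{\widehat\pi}(K)\|_{L^2(0,T;V)}\wedge 1\le C\delta$. Since $\delta$ is arbitrary, this is convergence in probability, and a diagonal subsequence then converges almost surely.

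The main obstacle is Step 2, specifically the left-hand piece of the decomposition. Here $\Phi$ is predictable for the limit filtration, but it must also be an admissible integrand against $\pi_N$. To handle this we would define $\Phi$ through the limit objects only via the approximating integrands $K_N$ themselves: replace $\Phi$ by $\Phi_N := \mathcal{R}(K_N)$, where $\mathcal{R}$ is a fixed continuous regularization operator (time-shift and mollify, project onto finitely many directions, restrict to $B_m$). This $\Phi_N$ is $\pi_N$-predictable, and $\Phi_N\to\mathcal{R}(K)$ almost surely. The Step 3 argument then needs joint convergence of $\Phi_N$ and $\pi_N$, which follows from uniform continuity of $\mathcal R(K_N)$ in $(s,\xi)$.
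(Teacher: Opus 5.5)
Your architecture is the paper's. You approximate $K$ by a predictable integrand that is continuous, bounded and boundedly supported in $(s,\xi)$, with finitely many directions in $V$ (Lemma~\ref{lemma:appendix 2}). You control the approximation errors by the It\^o isometry, and you treat the nice integrand pathwise through weak-$\#$ convergence, with the compensators cancelling. Your Step~3 is more careful than the paper's about the discontinuity of $\mathbbm{1}_{(0,t]}$ at $s=t$ and about the uniform bound via $\pi_N([0,T]\times\bar B_m)$.

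The flaw is in Step~2. The statement gives only $K_N\to K$ almost surely, with no uniform moment bound. The Vitali route therefore imports hypotheses (Corollary~\ref{cor:skorohod estimates}) that are not part of the theorem. Moreover, the claim $\E\|K_N-K\|^2\to0$ is false in this generality. Take $A_N\in\F_0$ with $\P(A_N)=N^{-2}$ and $K_N=N^2\mathbbm{1}_{A_N}g$ for a fixed deterministic $g\neq0$. Borel--Cantelli gives $K_N\to0$ a.s., while $\E\|K_N\|^2=N^2\|g\|^2\to\infty$. Localization does not give $L^2(\Omega)$ convergence either. What it gives, and all you need, is
\[
\P\Bigl(\sup_{t\le T}|I^{\widehat\pi_N}_t(g)|_V>\veps\Bigr)\le\delta+\P\Bigl(\int_0^T\!\!\int_{E_0}|g(s,\xi)|_V^2\,d\nu(\xi)\,ds\ge\delta\veps^2\Bigr).
\]
This follows by stopping when the (continuous) predictable quadratic variation $\int_0^t\int_{E_0}|g|_V^2\,d\nu\,ds$ reaches $\delta\veps^2$ and applying Doob's inequality. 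It is exactly the paper's Lemma~\ref{lemma:appendix 1}. So split $K_N-\Phi=(K_N-K)+(K-\Phi)$ as the paper does. The first piece tends to $0$ in probability by this bound. The second is small uniformly in $N$ by Chebyshev and the It\^o isometry. Your Step~4 only concludes convergence in probability and then extracts an a.s.\ subsequence, so the rest goes through.

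The obstacle you single out at the end does not arise under the stated hypotheses. $\mathbf F^2_{\nu,T}(V)$ is defined with a single predictable $\sigma$-algebra $\mathcal P_{[0,T]}$. For $I^{\widehat\pi_N}(K_N)$ to make sense, the $\pi_N$ must be Poisson random measures with respect to that same filtration. Hence $I^{\widehat\pi_N}(K-\Phi)$ and $I^{\widehat\pi_N}(\Phi)$ are well defined, and no $N$-dependent approximant is needed.

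Your device $\Phi_N=\mathcal R(K_N)$ is a reasonable way to handle the $N$-dependent filtrations $\TF^N_t$ of the application, which the paper passes over. If you use it, two things need care. First, the compensators no longer cancel; they converge, because $\mathcal RK_N\to\mathcal RK$ uniformly. Second, uniform-in-$N$ smallness of $K_N-\mathcal RK_N$ requires $\mathcal R$ to be a pathwise bounded linear operator, so that $\|K_N-\mathcal RK_N\|\le\|K-\mathcal RK\|+C\|K_N-K\|$.
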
 
The proof of this theorem relies on the following two lemmas.
\begin{lemma} \label{lemma:appendix 1}
For each $N\in\mathbb{N}$, consider the sequence of $g_N\in\mathbf{F}^2_{\nu, T}(V)$ satisfying
	\begin{equation}\label{gnto0}
		\lim_{N \to \infty }g_N= 0 \text{ in $L^2([0,T] \times E; dt\otimes d\nu; V)$, \qquad$\P$-a.s.}
	\end{equation}
	Then, the following convergence is true:
	 $$\left( I_t^{\widehat{\pi}_N}(g_N)  \right)_{t \in [0,T]} \to 0\quad\text{in probability in $ L^2(0, T; V)$}.$$ 
	The same conclusion is true if $\widehat{\pi}_N$ is replaced by $\widehat{\pi}$.
	\begin{proof}
		The proof of this lemma follows the techniques of \cite[Corollary 5.10]{CTT18euler} (See also \cite[Lemma 4.15]{NTT21}).  Recall that $\pi_N$ has intensity $dt \otimes d\nu$ for each $N \in \N$. Then for any $\veps, \delta > 0$, we write
		\begin{equation*} \label{eqn:appendix g conv}
			\begin{aligned}
				\P\biggl[  \int_0^T |I_t^{\widehat{\pi}_N}(g_N)|_{V}^2 dt > T\veps^2 \biggr] & \leq \P \biggl[ \sup_{t\in[0,T]}  |I_t^{\widehat{\pi}_N}(g_N)|_{V} > \veps \biggr] =: \P(\mathcal{M}_N) \\
				& = \P( \mathcal{M}_N \cap \mathcal{J}_N) + \P( \mathcal{M}_N \cap \mathcal{J}_N^c) \leq \P(\mathcal{J}_N )+ \P( \mathcal{M}_N \cap \mathcal{J}_N^c), 
			\end{aligned}
		\end{equation*}
		where 
		\begin{equation*}
			\begin{aligned}
				\mathcal{J}_N := \biggl\{  \int_0^T \int_{E_0} |g_N(s,\xi)|_V^2 d\nu(\xi) ds  > \delta \veps^2 \biggr\}.
			\end{aligned}
		\end{equation*}
We first handle the second term $ \P( \mathcal{M}_N \cap \mathcal{J}_N^c) $. Due to the maximal inequality for square-integrable martingales given in \cite[Theorem 3.8]{da2014stochastic} (see also \cite[Proposition 4.16]{da2014stochastic}), we know that
		\begin{equation*}
			\P ( \mathcal{M}_N \cap \mathcal{J}_N^c ) \leq \delta.
		\end{equation*}
		Next, we treat the other term $\P(\mathcal{J}_N)$. Thanks to the almost sure convergence (and thus in probability) assumption \eqref{gnto0} we can choose an $N\in\mathbb{N}$ large enough to ensure that
		\begin{equation*}
			\P(\mathcal{J}_N) \leq \delta.
		\end{equation*} 
		Collecting the bounds above for $\P(\mathcal{J}_N)$ and $\P ( \mathcal{M}_N \cap \mathcal{J}_N^c)$, and noting that $\delta > 0$ is arbitrary, we complete the proof that $\left( I_t^{\widehat{\pi}_N}(g_N)  \right)_{t \in [0,T]} \to 0$  in probability in $L^2(0, T; V)$. The same argument still works when we replace $\widehat{\pi}_N$ by $\widehat{\pi}$.
	\end{proof}
\end{lemma}
The next result concerns with the approximation of $K\in\mathbf{F}^2_{\nu, T}(V)$ with bounded, continuous functions with bounded support in $[0,T)\times E$ which are used to define the weak$-\#$ convergence of $\pi_N \to \pi$ in $\mathcal{N}_{[0,T] \times E}^\#$, $\P$-a.s. 
	Before stating our next result we recall that  (see e.g. \cite{ikeda2014stochastic, CTT18compare}):
	\begin{enumerate}
		\item $\mathbf{F}_{\nu,T}^1(V) \cap \mathbf{F}_{\nu,T}^2(V)$ is dense in $\mathbf{F}_{\nu,T}^2(V)$.
		\item The stochastic integration with respect to the random measure $\pi$ is only defined for integrands in $\mathbf{F}_{\nu,T}^1(V)$, while stochastic integration with respect to the compensated Poisson random measure $\widehat{\pi}$ is only defined for integrands in $\mathbf{F}_{\nu,T}^2(V)$. Hence, the formula 
		\begin{equation*}  \label{appendix:equality hat pi = pi - dnu ds}
			\begin{aligned} 
				\int_{(0,t]} \int_E f(s, \xi) \, d\widehat{\pi}(s, \xi) &= \int_{(0,t]} \int_E f(s, \xi) \, d\pi(s, \xi) - \int_0^t \int_E f(s, \xi) \, d\nu(\xi) \, ds \\
				&= \sum_{s \in (0,t] \cap \mathfrak{D}(\Xi)} f(s, \Xi(s)) - \int_0^t \int_E f(s, \xi) \, d\nu(\xi) \, ds 
			\end{aligned}
		\end{equation*}
		only holds for integrands $f \in \mathbf{F}_{\nu,T}^1(V) \cap \mathbf{F}_{\nu,T}^2(V)$. 
	\end{enumerate}
Thus, in the following proof we ensure that the approximation integrands are constructed in appropriate spaces.

\begin{lemma} \label{lemma:appendix 2}
	Let $K \in \mathbf{F}_{\nu,T}^2(V)$. Then for any $\delta > 0$, there exists a process $F: \Omega \times [0,T] \times E \to V$ belonging to $ \mathbf{F}_{\nu,T}^1(V) \cap \mathbf{F}_{\nu,T}^2(V)$ such that
	\begin{enumerate}[(i)]
		\item {
		 $F$ takes the following form:
			\begin{equation*}
				F(\omega, t, \xi) := \sum_{j=1}^n F_j(\omega, t, \xi) = \sum_{j=1}^n \mathbbm{1}_{D_j}(\omega) h_{s_j, r_j}(t)f_{B_j}(\xi)  v_j,
			\end{equation*}
	for some $D_j \in \mathcal{F}_{s_j}, B_j \in \mathcal{E}, v_j \in V$.	Here for each $j = 1, \cdots, n$
			 \begin{equation} \label{h, f requirement}
			 	\begin{aligned}
			 		& \text{$F_j\in \mathbf{F}_{\nu,T}^1(V) \cap \mathbf{F}_{\nu,T}^2(V)$,}\\
			 		& \text{$h_{s_j, r_j}: [0,T] \to [0,1]$  is continuous and is supported on $[s_j, T]$ for $s_j \geq 0$,} \\
			 		& \text{$f_{B_j}: E \to [0,1]$  is continuous and has bounded support in $E$.} 
			 	\end{aligned}
			 \end{equation}
			Consequently, for almost any $\omega \in \Omega$, $F(\omega, \cdot, \cdot)$ is continuous, bounded and has bounded support in $[0,T] \times E$.
			 }
		\item The following bound holds true: 
		\begin{equation}\label{eqn:K - F approximation}
			\E \int_0^T \int_{E_0} |K(t,\xi) - F(t,\xi)|_V^2 d\nu(\xi) dt < \delta.
		\end{equation}
	\end{enumerate}
\end{lemma}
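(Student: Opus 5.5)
The plan is to build $F$ in three reductions, each costing at most $\delta/3$ in the norm of \eqref{eqn:K - F approximation}. Call that norm $\|\cdot\|$; it is the norm of $L^2(\Omega\times[0,T]\times E_0,\mathcal{P}_{[0,T]}\otimes\mathcal{E}, d\P\otimes dt\otimes d\nu;V)$.

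\textbf{Step 1: reduce to bounded support.} By \eqref{appendix:sigma finite}, $E_0$ is exhausted by the sets $A_m\cap E_0$, each of finite $\nu$-measure and bounded in the metric $d$ (bounded sets are exactly those separated from $0$). Truncating in $\xi$ to $A_m\cap E_0$ and in values to $\{|K|_V\le m\}$ gives $\mathbbm{1}_{A_m\cap E_0}(\xi)\mathbbm{1}_{\{|K|_V\le m\}}K$. By dominated convergence this tends to $K$ in $\|\cdot\|$. The truncated function is bounded and supported on a set of finite $d\P\otimes dt\otimes d\nu$ measure, so it already lies in $\mathbf{F}^1_{\nu,T}(V)\cap\mathbf{F}^2_{\nu,T}(V)$.

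\textbf{Step 2: reduce to simple predictable functions.} Since $V$ is separable, I first project onto a finite-dimensional subspace $\mathrm{span}\{v_1,\dots,v_k\}$, which costs arbitrarily little by dominated convergence. This reduces the problem to scalar bounded functions on $\Omega\times[0,T]\times (A_m\cap E_0)$, measurable with respect to $\mathcal{P}_{[0,T]}\otimes\mathcal{E}$. The predictable $\sigma$-algebra is generated by the rectangles $D\times(s,r]$ with $D\in\mathcal{F}_s$, together with the sets $D\times\{0\}$, $D\in\mathcal{F}_0$. Hence the products $\mathbbm{1}_D(\omega)\mathbbm{1}_{(s,r]}(t)\mathbbm{1}_B(\xi)$ with $B\subset A_m$ Borel generate a ring generating $\mathcal{P}_{[0,T]}\otimes\mathcal{E}$ on a finite-measure space. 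A standard monotone class / $\pi$-$\lambda$ argument then shows that finite linear combinations $\sum_j \mathbbm{1}_{D_j}\mathbbm{1}_{(s_j,r_j]}\mathbbm{1}_{B_j}v_j$ are dense in $L^2$ of this space. Only finitely many terms are needed, and $B_j$ may be taken inside $A_m$.

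\textbf{Step 3: smooth the indicators.} I replace $\mathbbm{1}_{(s_j,r_j]}$ by a continuous $h_{s_j,r_j}:[0,T]\to[0,1]$ supported in $[s_j,T]$: it vanishes on $[0,s_j]$, equals $1$ on $[s_j+\eta,r_j]$, and is linear in between and after $r_j$ down to $0$ at $r_j+\eta$. The $L^2(dt)$ error is $O(\eta)$. Because $h_{s_j,r_j}$ is deterministic, continuous and vanishes on $[0,s_j]$, and $D_j\in\mathcal{F}_{s_j}$, the process $\mathbbm{1}_{D_j}h_{s_j,r_j}(t)$ is adapted and left-continuous, hence predictable. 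For $\xi$, the restriction of $\nu$ to the bounded set $A_{m+1}$ is a finite Borel measure on a metric space, so it is regular. I therefore choose a closed $C\subset B_j$ and an open $O\supset B_j$ with $O\subset A_{m+1}$ and $\nu(O\setminus C)$ small. The Urysohn function $f_{B_j}(\xi)=d(\xi,O^c)/(d(\xi,O^c)+d(\xi,C))$ is continuous, takes values in $[0,1]$, is supported in the bounded set $\overline{O}$, and approximates $\mathbbm{1}_{B_j}$ in $L^2(\nu)$. Each $F_j$ is bounded with support of finite measure, so it lies in $\mathbf{F}^1\cap\mathbf{F}^2$. Summing the errors through the triangle inequality gives (ii), and (i) holds by construction: for each $\omega$, $F(\omega,\cdot,\cdot)$ is a finite sum of continuous, bounded, boundedly supported functions.

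I expect the main obstacle to be Step 2, namely justifying density of these particular product-form simple functions in the predictable $L^2$ space. I would handle it by showing that the closure of their span contains the indicators of all sets in the generating ring, and then is closed under monotone limits, using finiteness of the measure after Step 1. Ensuring $D_j\in\mathcal{F}_{s_j}$ with $h$ vanishing before $s_j$ is what keeps predictability through Step 3.
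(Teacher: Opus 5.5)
Your proposal is correct and follows the same route as the paper. Both arguments approximate $K$ by finite sums of predictable product-form simple functions $\mathbbm{1}_{D_j}\mathbbm{1}_{(s_j,r_j]}\mathbbm{1}_{B_j}v_j$ with bounded $B_j$, replace the time indicator by a piecewise-linear $h_{s_j,r_j}$ and the spatial indicator by the Urysohn function $f_{B_j}$ obtained from regularity of $\nu$ on a bounded set, and conclude by the triangle inequality.

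The differences are minor. You truncate to $A_m\cap E_0$ up front, whereas the paper decomposes unbounded $B_j$ afterwards. You also supply details the paper only asserts:
\begin{itemize}
\item the monotone-class density of the product-form simple functions;
\item predictability of $\mathbbm{1}_{D_j}h_{s_j,r_j}$;
\item choosing $O\subset A_{m+1}$, so that regularity is applied to a finite measure and $f_{B_j}$ has bounded support.
\end{itemize}
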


\begin{proof}
	\textbf{Step 1}: 
		Let $Leb_T$ denote the Lebesgue measure on $[0,T]$. Since $\P\times {Leb}_T\times \nu$ is $\sigma$-finite measure on $\Omega\times [0,T]\times E$, the set of simple functions is dense in $\mathbf{F}_{\nu,T}^2(V)$. Thus, in this step
	we will first assume $K$ to be a simple function and later in Step 5, we will establish the desired result for an arbitrary $K$. 
	In that spirit, assume	that $K$ takes the following form:
	\begin{equation} \label{eqn:simple K}
		K(\omega, t, \xi) := \mathbbm{1}_D(\omega) \mathbbm{1}_{[s, r)}(t) \mathbbm{1}_B(\xi) v, \quad \text{$D \in \mathcal{F}_s$, $B \in \mathcal{E}$, and $v \in V$.}
	\end{equation}
Note for unbounded sets $B$ that we can write $B = \bigcup_{m\in \N} (B \cap \tilde{A}_m)$ as a disjoint union, where $\tilde{A}_m := A_{m+1} \setminus A_m$ for $A_m$ defined in the partition \eqref{appendix:sigma finite}. Then we can decompose the indicator $\mathbbm{1}_B$ as follows:
	 \begin{equation}\label{B}
	 	\mathbbm{1}_B = \sum_{m \in \N} \mathbbm{1}_{B \cup \tilde{A}_m}.
	 \end{equation} 
	 In this case, $K$ can be approximated by the following sequence of simple function,
	\begin{equation}\label{tildeKm}
		\tilde{K}_m(\omega, t, \xi) = \mathbbm{1}_A(\omega) \mathbbm{1}_{[s, r)}(t) \mathbbm{1}_{B \cap \tilde{A}_m}(\xi) v.
	\end{equation}
		  Hence, in the following analysis, for simplicity, we assume that $B$ in  \eqref{eqn:simple K} is bounded. 

	 To obtain a desired approximation for $K$, we will find smooth approximations for the indicator functions  $\mathbbm{1}_{[s, r)}$ and $\mathbbm{1}_B$. \\
	\textbf{Step 2}: We start with $\mathbbm{1}_B$. Since $B$ is bounded, $\nu(B) < \infty$, thanks to \cite[Chapter II, Theorem 1.2]{parthasarathy2005probability}, for any $\veps > 0$, we might choose $B_1, B_2 \in \mathcal{E}$ such that 
	\begin{enumerate}[(1)]
		\item $B_1 \subset B \subset B_2$, 
		\item $B_1$ is closed and $B_2$ is open,
		\item $\nu(B_2 \setminus B_1) < \veps$.
	\end{enumerate}
	We can define the function 
	\begin{equation*} \label{eqn:B mollifier}
		f_B(\xi) := \frac{d(\xi, B_2^c)}{d(\xi, B_1) + d(\xi, B_2^c)}. 
	\end{equation*}
We can then verify that $f_B: E \to [0,1]$ is continuous, $f_B = 1$ on $B_1$ and $f_B = 0$ outside of $B_2$. \\
	\textbf{Step 3}: Next, we approximate the time indicator $\mathbbm{1}_{(s, r]}$. For $\veps > 0$, we define a continuous function $h_{s,r}: [0, T] \to [0, 1]$ 
	such that $h(t) = 0$ for $t \in [0, s]$, increases linearly to $1$ on $(s, s+\veps]$, equals $1$ on $(s+\veps, r]$, and decreases linearly to $0$ on $(r, (r+\veps) \wedge T]$.  \\
	\textbf{Step 4}: We verify that 
	\begin{equation*}  \label{indicator F}
		F(\omega, t, \xi) :=  \mathbbm{1}_A(\omega) h_{s,r}(t) f_B(\xi) v
	\end{equation*}
	satisfies the requirement. We observe that $F \in \mathbf{F}_{\nu,T}^1(V) \cap \mathbf{F}_{\nu,T}^2(V)$, and $F(\omega, \cdot, \cdot)$ is a continuous, bounded functions with bounded support. Next, we proceed to check \eqref{eqn:K - F approximation}. We recall that for $K$ and $F$ both have supports in $B_2$, $dt\otimes d\P$-a.e.. Moreover, 
	\begin{equation*}
		K(t, \xi) \neq F(t, \xi) \text{ only if $t \in (s, s+\veps] \cup (r, (r+\veps) \wedge T]$ or $\xi \in B_2 \setminus B_1$, $\P$-a.s.}
	\end{equation*}
	It follows that,
	\begin{equation*}
		\begin{aligned}
			& \E \int_0^T \int_{E_0} |K(t,\xi) - F(t,\xi)|_V^2 d\nu(\xi) dt = \E \int_0^T \int_{ E_0\cap B_2} |K(t,\xi) - F(t,\xi)|_V^2 d\nu(\xi) dt \\
			& \leq \E\int_{(s, s+\veps] \cup (r, (r+\veps) \wedge T]} \int_{ E_0\cap B_2} 4|v|_V^2 d\nu(\xi)dt + \E\int_{0}^T \int_{ (E_0\cap B_2) \setminus B_1} 4|v|_V^2 d\nu(\xi)dt \\
			&\leq (8\nu({ E_0\cap B_2}) + 4 T)\veps |v|_V^2.
		\end{aligned}
	\end{equation*}
	Since $\nu(B_2 \setminus B_1) < \veps$ by construction, and $\nu(B) < \infty$, we obtain that
	\begin{equation*}
		\nu(E_0 \cap B_2) \leq \nu(B_2) \leq \nu(B) + \nu(B_2 \setminus B_1) < \infty.
	\end{equation*} 
	By choosing $\displaystyle \veps < \frac{\delta}{(8\nu({ E_0\cap B_2}) + 4T)|v|_V^2}$, we conclude that  \eqref{eqn:K - F approximation} holds, thus completing the proof for the case $K$ is simple.
	
	\textbf{Step 5}: Next, we establish the result for arbitrary $K \in \mathbf{F}_{\nu, T}^2(V)$. Recalling that simple functions are dense in $\mathbf{F}_{\nu, T}^2(V)$, for any $\delta$ there exists $n\in\mathbb{N}$ such that
	\begin{equation*}
		\E \int_0^T \int_{E_0} \bigg|K(t,\xi) - \sum_{j=1}^n K_j (t,\xi) \bigg|_V^2 d\nu(\xi) dt < \frac{\delta}{2},
	\end{equation*}
	where 
	\begin{equation} \label{Kj}
		K_j(\omega, t, \xi) := \mathbbm{1}_{D_j}(\omega) \mathbbm{1}_{[s_j, r_j)}(t) \mathbbm{1}_{B_j}(\xi) v_j, \quad \text{for some $D_j \in \mathcal{F}_{s_j}$, $B_j \in \mathcal{E}$, and $v_j \in V$}.
	\end{equation}
	Here the measurable sets $B_j$ may be unbounded. However, in that case we can use the characterization \eqref{B} and approximate each $K_j$ with $\tilde K_m$ given in \eqref{tildeKm}. To avoid excessive notation, for simplicity we thus assume that each $B_j$ in \eqref{Kj} is bounded.
	
	Since each $K_j$ is simple, following the argument given in Steps 1--4 we conclude that there exist $F_j \in \mathbf{F}_{\nu,T}^1(V) \cap \mathbf{F}_{\nu,T}^2(V)$ such that 
	\begin{equation*}
		F_j(\omega,t,\xi) =  \mathbbm{1}_{D_j}(\omega) h_{s_j, r_j}(t) f_{B_j}(\xi) v_j, 
	\end{equation*}
	where $h_{s_j, r_j}$, $f_{B_j}$ satisfy  \eqref{h, f requirement}. Moreover, 
	\begin{equation*}
		\E \int_0^T \int_{E_0} |K_j(t,\xi) - F_j(t,\xi)|_V^2 d\nu(\xi) dt < \frac{\delta}{2n}.
	\end{equation*}
	Finally, define
	\begin{equation*}
		F(\omega, t, \xi) := \sum_{j=1}^n F_j(\omega, t, \xi).
	\end{equation*}
	Then it follows that $F \in \mathbf{F}_{\nu,T}^1(V) \cap \mathbf{F}_{\nu,T}^2(V)$ and by the triangle inequality we have
	\begin{equation*}
		\begin{aligned}
		\E \int_0^T \int_{E_0} |K(t,\xi) - F(t,\xi)|_V^2 d\nu(\xi) &dt  \leq \E \int_0^T \int_{E_0} \bigg|K(t,\xi) -  \sum_{j=1}^n K_j(t,\xi)\bigg|_V^2 d\nu(\xi) dt \\
		& + \sum_{j=1}^n 	\E \int_0^T \int_{E_0} |K_j(t,\xi) - F_j(t,\xi)|_V^2 d\nu(\xi) dt  < \frac{\delta}{2} + n\cdot \frac{\delta}{2n} = \delta.
	\end{aligned}
	\end{equation*}
\end{proof}

Next, we finally give a proof of Theorem \ref{lemma:convergence appendix}. 
\begin{proof}[Proof of Theorem \ref{lemma:convergence appendix}]

	 	Consider $K \in \mathbf{F}_{\nu, T}^2(V)$.	Then for any $\delta, \veps > 0$, we know, due to Lemma \ref{lemma:appendix 2}, that there exists $F \in \mathbf{F}_{\nu, T}^1(V) \cap \mathbf{F}_{\nu, T}^2(V)$ which is almost surely continuous, bounded and has bounded support such that
	 	\begin{equation} \label{eqn:appendix approximation to K}
	 		\E \int_0^T \int_{E_0} |K(t,\xi) - F(t,\xi)|_V^2 d\nu(\xi) dt < \delta \veps^2.
	 	\end{equation}
	 Moreover, the function $F$ takes the following form: 
	 \begin{equation} \label{simple F}
	 	F(\omega, t, \xi) := \sum_{j=1}^m F_j(\omega, t, \xi) :=  \sum_{j=1}^m  \mathbbm{1}_{D_j}(\omega) h_{s_j, r_j}(t)f_{B_j}(\xi)  v_j, \quad  D_j \in \mathcal{F}_{s_j}, v_j \in V,
	 \end{equation}
	 where for each $j=1,..,m$, the function $F_j\in\mathbf{F}_{\nu,T}^1(V) \cap \mathbf{F}_{\nu,T}^2(V)$, the functions $h_{s_j, r_j}:[0,T] \to [0,1]$ and $\tilde{f}_{B_j}: E \to [0,1]$ are both continuous, bounded and have bounded supports. 
	By rewriting
	\begin{equation*} \label{eqn:appendix split events}
		\begin{aligned}
			I_t^{\widehat{\pi}_N}(K_N) - I_t^{\widehat{\pi}}(K) & = I_t^{\widehat{\pi}_N}(K_N - K) + I_t^{\widehat{\pi}_N}(K - F)
			+ \biggl( I_t^{\widehat{\pi}_N}(F)  - I_t^{\widehat{\pi}}(F) \biggr) + I_t^{\widehat{\pi}}(F - K),
		\end{aligned}
	\end{equation*}
	we can split the event
	\begin{equation*}
		\begin{aligned}
			\P \biggl[ \int_0^T|_t^{\widehat{\pi}_N}(K_N) - I_t^{\widehat{\pi}}(K)|_V^2 dt > 4\veps  \biggr] & \leq \P \biggl[ \int_0^T | I_t^{\widehat{\pi}_N}(K_N - K)|_V^2 dt > \veps  \biggr] + \P \biggl[ \int_0^T | I_t^{\widehat{\pi}_N}(K - F)|_V^2 dt > \veps  \biggr] \\
			&  + \P \biggl[ \int_0^T| I_t^{\widehat{\pi}_N}(F)  - I_t^{\widehat{\pi}}(F) |_V^2 dt > \veps  \biggr]+ 
			\P \biggl[ \int_0^T| I_t^{\widehat{\pi}}(F - K) |_V^2 dt > \veps  \biggr] \\
			& =: P^N_1 + P^N_2 + P^N_3 + P_4.
		\end{aligned}
	\end{equation*}
	We will obtain estimates for $P^N_j; j = 1,2,3,4$ term-by-term starting with the term $P^N_1$. Since $K_N \to K$ in $L^2([0,T] \times E_0; dt \otimes d\nu; V)$, $\P$-a.s., we invoke Lemma \ref{lemma:appendix 1}, and conclude that there exists an $N$ such that for any $n \geq N$ we have
	\begin{equation*}
		P^n_1 < \delta. 
	\end{equation*}
	Next, due to Chebyshev's inequality, the It\^{o} isometry and \eqref{eqn:appendix approximation to K}, we obtain for the term $P_2^N$ that
	\begin{equation*}
		\begin{aligned}
			P^N_2 \leq \frac{\E \int_0^T \int_{E_0} |K(t,\xi) - F(t,\xi)|_V^2 d\nu(\xi) dt}{\veps^2} < \delta, \qquad \text{ for every }N\in\mathbb{N}.
		\end{aligned}
	\end{equation*}
The term $P_4$ is treated similarly and we have,
	\begin{equation*}
		\begin{aligned}
			P_4 < \delta.
		\end{aligned}
	\end{equation*}
	Finally, we turn our attention to the term $P^N_3$. {The fact that $F_j \in \mathbf{F}_{\nu,T}^1(V) \cap \mathbf{F}_{\nu,T}^2(V)$ allows us to write
		\begin{equation*}
			I_t^{\widehat{\pi}_N}(F) = I_t^{\pi_N}(F)  + \int_0^t \int_{E_0} F(s,\xi) d\nu(\xi) ds,\quad I_t^{\widehat{\pi}}(F) = I_t^{\pi}(F)  + \int_0^t \int_{E_0} F(s,\xi) d\nu(\xi) ds.
		\end{equation*}
	}Recalling that $\pi_N \to \pi$ almost surely in $\mathcal{N}_{[0,\infty) \times E}^\#$ endowed with the weak-$\#$ topology (see \eqref{weak hash conv} for definition), and that $\pi_N, \pi$ have intensity $dt \otimes d\nu$, we can conclude that $I_t^{\pi_N}(F) \to I_t^{\pi}(F)$ as $N \to \infty$, $dt\otimes d\P$-a.s. in $V$. Hence,
	\begin{equation}\label{hashconvergence}
		I_t^{\widehat{\pi}_N}(F) \to I_t^{\widehat{\pi}}(F), \quad \text{as $N \to \infty$, $dt\otimes d\P$-a.s. in $V$.}
	\end{equation}
	For $F$ given in $\eqref{simple F}$,  and the linearity of the integral operators $I_t^{\widehat{\pi}_N}$ and $I_t^{\widehat{\pi}}$, we have
	\begin{equation*} 
		I_t^{\widehat{\pi}_N}(F) = \sum_{j=1}^m I_t^{\widehat{\pi}_N}(F_j), \quad I_t^{\widehat{\pi}}(F) = \sum_{j=1}^m I_t^{\widehat{\pi}}(F_j).
	\end{equation*}
	Therefore, to prove bounds for the term $P_3^N$, it suffices to show that for each $j = 1, \cdots, m$ we have, 
	\begin{equation} \label{F_j conv}
		\int_0^T |I_t^{\widehat{\pi}_N}(F_j)|_V^2 dt \to \int_0^T | I_t^{\widehat{\pi}}(F_j)|_V^2 dt, \text{ as $N \to \infty$, $\P$-a.s.}
	\end{equation}
	We note that the function $h_{s_j, r_j}(t)\tilde{f}_{B_j}(\xi)$ is continuous on $[0,T] \times E$. Hence, using the same ideas as in \eqref{hashconvergence} and invoking the definition of $\pi_N \to \pi$ in $\mathcal{N}_{[0,\infty) \times E}^\#$ endowed with the weak-$\#$ topology we obtain that 
	\begin{multline*}
\mathbbm{1}_{D_j}	v_j	\int_0^T \int_{E_0} h_{s_j, r_j}(t) \tilde{f}_{B_j}(\xi) d\widehat{\pi}_N(t,\xi) \to  \mathbbm{1}_{D_j} v_j \int_0^T \int_{E_0} h_{s_j, r_j}(t) \tilde{f}_{B_j}(\xi) d\widehat{\pi}(t,\xi) \\
\text{ as $N \to \infty$, $\P$-a.s. in $V$.}
	\end{multline*}
	Hence for almost every $\omega\in\Omega$ there exists some $M = M(\omega)> 0 $ such that
	\begin{equation*}
		\sup_{N \in \N}|I_t^{\widehat{\pi}_N}(F_j)|_V \leq  |v_j|_V \sup_{N \in \N} \bigg| \int_0^T \int_{E_0} h_{s_j, r_j}(t) \tilde{f}_{B_j}(\xi) d\widehat{\pi}_N(t,\xi) \bigg| \leq M |v_j|_V,\quad \text{ $\P$-a.s.}
	\end{equation*}
Hence \eqref{F_j conv} follows from the previous two equations and an application of the Dominated Convergence Theorem. We thus arrive at
	\begin{equation*}
		\int_0^T |I_t^{\widehat{\pi}_N}(F)|_V^2 dt \to \int_0^T | I_t^{\widehat{\pi}}(F)|_V^2 dt, \text{ as $N \to \infty$, $\P$-a.s.}
	\end{equation*}
	This tells us that for sufficiently large $N$, we have
	\begin{equation*}
		P^N_3 < \delta.
	\end{equation*}
Collecting the estimates for $P^N_j; j = 1,2,3,4$ ends the proof.
	\end{proof}

\section*{Acknowledgment}
K. Tawri gratefully acknowledges the support by the National Science Foundation grant DMS-2553666 (formerly DMS-2407197). X. Yang gratefully acknowledges the support by the Research Fund of Indiana University.

\bibliographystyle{plain}
\bibliography{Reference.bib}

@article{DGT11,
  title={Local martingale and pathwise solutions for an abstract fluids model},
  author={Debussche, Arnaud and Glatt-Holtz, Nathan and Temam, Roger},
  journal={Physica D: Nonlinear Phenomena},
  volume={240},
  number={14-15},
  pages={1123--1144},
  year={2011},
  publisher={Elsevier}
}

@book{parthasarathy2005probability,
  title={Probability measures on metric spaces},
  author={Parthasarathy, Kalyanapuram Rangachari},
  volume={352},
  year={2005},
  publisher={American Mathematical Soc.}
}

@article{jakubowski1998almost,
  author={Jakubowski, Adam},
  title={The almost sure {Skorokhod} representation for subsequences in nonmetric spaces},
  journal={Theory of Probability \& Its Applications},
  volume={42},
  number={1},
  pages={167--174},
  year={1998},
  publisher={SIAM}
}

@book{daley2008introduction,
  title={An introduction to the theory of point processes: volume {II}: general theory and structure},
  author={Daley, Daryl J and Vere-Jones, David},
  year={2008},
  publisher={Springer}
}

@article{KPV96,
  title={A bilinear estimate with applications to the {KdV} equation},
  author={Kenig, Carlos and Ponce, Gustavo and Vega, Luis},
  journal={Journal of the American Mathematical Society},
  volume={9},
  number={2},
  pages={573--603},
  year={1996}
}

@article{chang1986,
doi = {10.1088/0741-3335/28/4/005},
url = {https://doi.org/10.1088/0741-3335/28/4/005},
year = {1986},
month = {apr},
publisher = {},
volume = {28},
number = {4},
pages = {675},
author = {Hong-Young Chang and Chuong Lien and S Sukarto and S Raychaudhuri and J Hill and E K Tsikis and K E Lonngren},
title = {Propagation of ion-acoustic solitons in a non-quiescent plasma},
journal = {Plasma Physics and Controlled Fusion}
}

@article{SRC1998,
  title={{Korteweg--de Vries} solitons under additive stochastic perturbations},
  author={Scalerandi, Marco and Romano, A and Condat, CA},
  journal={Physical Review E},
  volume={58},
  number={4},
  pages={4166},
  year={1998},
  publisher={APS}
}

@article{Herman1990,
doi = {10.1088/0305-4470/23/7/014},
url = {https://doi.org/10.1088/0305-4470/23/7/014},
year = {1990},
month = {apr},
publisher = {},
volume = {23},
number = {7},
pages = {1063},
author = {R L Herman},
title = {The stochastic, damped {KdV} equation},
journal = {Journal of Physics A: Mathematical and General}
}

@article{Bourgain93,
  title={{Fourier transform restriction phenomena for certain lattice subsets and applications to nonlinear evolution equations: Part II: The KdV-equation}},
  author={Bourgain, Jean},
  journal={Geometric \& Functional Analysis GAFA},
  volume={3},
  number={3},
  pages={209--262},
  year={1993},
  publisher={Springer}
}

@article{CKSTT03,
  title={{Sharp global well-posedness for {KdV} and modified {KdV} on $\mathbb{R}$ and $\mathbb{T}$}},
  author={Colliander, James and Keel, Markus and Staffilani, Gigliola and Takaoka, Hideo and Tao, Terence},
  journal={Journal of the American Mathematical Society},
  volume={16},
  number={3},
  pages={705--749},
  year={2003}
}

@article{Ondrejt04,
  title={Uniqueness for stochastic evolution equations in {Banach} spaces},
  author={Martin Ondrej{\'a}t},
  journal={Dissertationes Mathematicae},
  year={2004},
  volume={426},
  pages={1-63},
  url={https://api.semanticscholar.org/CorpusID:120245797}
}

@article{motyl2013,
  title={{Stochastic {Navier--Stokes} equations driven by {L{\'e}vy} noise in unbounded {3D} domains}},
  author={Motyl, El{\.z}bieta},
  journal={Potential Analysis},
  volume={38},
  number={3},
  pages={863--912},
  year={2013},
  publisher={Springer}
}

@article{BMO17,
author = {Zdzisław Brzeźniak and Elżbieta Motyl and Martin Ondrejat},
title = {{Invariant measure for the stochastic {Navier–-Stokes} equations in unbounded {2D} domains}},
volume = {45},
journal = {The Annals of Probability},
number = {5},
publisher = {Institute of Mathematical Statistics},
pages = {3145 -- 3201},
year = {2017},
doi = {10.1214/16-AOP1133},
URL = {https://doi.org/10.1214/16-AOP1133}
}

@article{RM23,
  title={Nonlinear {SPDE} driven by {L\'{e}vy} noise: Well-posedness, optimal control and invariant measure},
  author={Kavin R and Ananta K. Majee},
  journal={arXiv preprint arXiv:2306.04303},
  year={2023}
}

@article{Kato79,
  title={{On the {Korteweg--de Vries} equation}},
  author={Kato, Tosio},
  journal={Manuscripta mathematica},
  volume={28},
  number={1},
  pages={89--99},
  year={1979},
  publisher={Springer}
}

@article{BS75,
  title={{The initial-value problem for the {Korteweg-de Vries} equation}},
  author={Bona, Jerry L and Smith, Ronald},
  journal={Philosophical Transactions of the Royal Society of London. Series A, Mathematical and Physical Sciences},
  volume={278},
  number={1287},
  pages={555--601},
  year={1975},
  publisher={The Royal Society London}
}

@article{ST76,
  title={Remarks on the {Korteweg--de Vries} equation},
  author={Saut, Jean-Claude and Temam, Roger},
  journal={Israel Journal of Mathematics},
  volume={24},
  pages={78--87},
  year={1976},
  publisher={Springer}
}

@article{BS76,
author = {Jerry Bona and Ridgway Scott},
title = {Solutions of the {Korteweg--de Vries} equation in fractional order {Sobolev} spaces},
volume = {43},
journal = {Duke Mathematical Journal},
number = {1},
publisher = {Duke University Press},
pages = {87 -- 99},
year = {1976},
doi = {10.1215/S0012-7094-76-04309-X},
URL = {https://doi.org/10.1215/S0012-7094-76-04309-X}
}

@article{Ghi88,
  title={Weakly damped forced {Korteweg--de Vries} equations behave as a finite dimensional dynamical system in the long time},
  author={Ghidaglia, Jean-Michel},
  journal={Journal of Differential Equations},
  volume={74},
  number={2},
  pages={369--390},
  year={1988},
  publisher={Elsevier}
}

@article{MR97,
author = {I. Moise and R. Rosa},
title = {{On the regularity of the global attractor of a weakly damped, forced {Korteweg--de Vries} equation}},
volume = {2},
journal = {Advances in Differential Equations},
number = {2},
publisher = {Khayyam Publishing, Inc.},
pages = {257 -- 296},
year = {1997},
doi = {10.57262/ade/1366809216},
URL = {https://doi.org/10.57262/ade/1366809216}
}

@article{DeBussche98,
  title={On the stochastic {Korteweg--de Vries} equation},
  author={de Bouard, Anne and Debussche, Arnaud},
  journal={journal of functional analysis},
  volume={154},
  number={1},
  pages={215--251},
  year={1998},
  publisher={Elsevier}
}

@article{dBDT99,
  title={White noise driven {Korteweg--de Vries} equation},
  author={de Bouard, Anne and Debussche, Arnaud and Tsutsumi, Yoshio},
  journal={Journal of Functional Analysis},
  volume={169},
  number={2},
  pages={532--558},
  year={1999},
  publisher={Elsevier}
}

@article{dBDT04,
  title={Periodic Solutions of the {Korteweg--de Vries} Equation Driven by White Noise},
  author={de Bouard, Anne and Debussche, Arnaud and Tsutsumi, Yoshio},
  journal={SIAM journal on mathematical analysis},
  volume={36},
  number={3},
  pages={815--855},
  year={2005},
  publisher={SIAM}
}

@article{GK96,
	title={Existence of strong solutions for {It{\^o}'s} stochastic equations via approximations},
	author={Gy{\"o}ngy, Istv{\'a}n and Krylov, Nicolai},
	journal={Probability theory and related fields},
	volume={105},
	number={2},
	pages={143--158},
	year={1996},
	publisher={Springer}
}

@article{YW71,
	title={On the uniqueness of solutions of stochastic differential equations},
	author={Yamada, Toshio and Watanabe, Shinzo},
	journal={Journal of Mathematics of Kyoto University},
	volume={11},
	number={1},
	pages={155--167},
	year={1971},
	publisher={Duke University Press}
}

@book{peszat2007stochastic,
  title={Stochastic partial differential equations with {L{\'e}vy} noise: An evolution equation approach},
  author={Peszat, Szymon and Zabczyk, Jerzy},
  volume={113},
  year={2007},
  publisher={Cambridge University Press}
}

@book{da2014stochastic,
  title={Stochastic equations in infinite dimensions},
  author={Da Prato, Giuseppe and Zabczyk, Jerzy},
  volume={152},
  year={2014},
  publisher={Cambridge university press}
}

@article{GMR17,
  title={On unique ergodicity in nonlinear stochastic partial differential equations},
  author={Glatt-Holtz, Nathan and Mattingly, Jonathan C and Richards, Geordie},
  journal={Journal of Statistical Physics},
  volume={166},
  pages={618--649},
  year={2017},
  publisher={Springer}
}

@article{GMR21,
  title={On the long-time statistical behavior of smooth solutions of the weakly damped, stochastically-driven {KdV} equation},
  author={Glatt-Holtz, Nathan and Martinez, Vincent R and Richards, Geordie H},
  journal={arXiv preprint arXiv:2103.12942},
  year={2021}
}

@article{EKZ18,
  title={Existence of invariant measures for the stochastic damped {KdV} equation},
  author={Ekren, Ibrahim and Kukavica, Igor and Ziane, Mohammed},
  journal={Indiana University Mathematics Journal},
  pages={1221--1254},
  year={2018},
  publisher={JSTOR}
}

@article{Tem69,
  title={Sur un probl\'{e}me non lin\'{e}aire},
  author={Temam, Roger},
  volume={48},
  pages={159--172},
  year={1969},
  journal={J. Math. Pures Appl}
}

@book{Tem97,
	title     = {Infinite-Dimensional Dynamical Systems in Mechanics and Physics},
	author    = {Temam, Roger},
	year      = {1997},
	publisher = {Springer New York, NY},
	series    = {Applied Mathematical Sciences},
	volume    = {68},
	edition   = {2},
	doi       = {10.1007/978-1-4612-0645-3},
	pages     = {XXII, 650}
}

@book{ikeda2014stochastic,
  title={Stochastic differential equations and diffusion processes},
  author={Ikeda, Nobuyuki and Watanabe, Shinzo},
  year={2014},
  publisher={Elsevier}
}

@article {CTT18euler,
    AUTHOR = {Cyr, Justin and Tang, Sisi and Temam, Roger},
     TITLE = {The {E}uler equations of an inviscid incompressible fluid
              driven by a {L}\'{e}vy noise},
   JOURNAL = {Nonlinear Anal. Real World Appl.},
  FJOURNAL = {Nonlinear Analysis. Real World Applications. An International
              Multidisciplinary Journal},
    VOLUME = {44},
      YEAR = {2018},
     PAGES = {173--222},
}

@article{Aldous78,
	author = {Aldous, David},
	title = {{Stopping Times and Tightness}},
	volume = {6},
	journal = {The Annals of Probability},
	number = {2},
	publisher = {Institute of Mathematical Statistics},
	pages = {335 -- 340},
	year = {1978},
	doi = {10.1214/aop/1176995579},
	URL = {https://doi.org/10.1214/aop/1176995579}
}

@article{flandoli1995martingale,
  title={Martingale and stationary solutions for stochastic {Navier--Stokes} equations},
  author={Flandoli, Franco and Gatarek, Dariusz},
  journal={Probability Theory and Related Fields},
  volume={102},
  pages={367--391},
  year={1995},
  publisher={Springer}
}

@article{CTT18compare,
  title={A comparison of two settings for stochastic integration with respect to {L{\'e}vy} processes in infinite dimensions},
  author={Cyr, Justin and Tang, Sisi and Temam, Roger},
  journal={Trends in Applications of Mathematics to Mechanics},
  pages={289--373},
  year={2018},
  publisher={Springer}
}

@article{CNTT20,
  title={Review of local and global existence results for stochastic {PDEs} with {L{\'e}vy} noise},
  author={Cyr, Justin and Nguyen, Phuong and Tang, Sisi and Temam, Roger},
  journal={Discrete \& Continuous Dynamical Systems-A},
  volume={40},
  number={10},
  pages={5639},
  year={2020},
  publisher={American Institute of Mathematical Sciences}
}

@book{billingsley2013convergence,
  title={Convergence of probability measures},
  author={Billingsley, Patrick},
  year={2013},
  publisher={John Wiley \& Sons}
}

@article{NTT21,
  title={Nonlinear stochastic parabolic partial differential equations with a monotone operator of the {Ladyzenskaya--Smagorinsky} type, driven by a {L{\'e}vy} noise},
  author={Nguyen, Phuong and Tawri, Krutika and Temam, Roger},
  journal={Journal of Functional Analysis},
  volume={281},
  number={8},
  pages={109157},
  year={2021},
  publisher={Elsevier}
}

@article{NaMo09,
author = {Nathan Glatt-Holtz and Mohammed Ziane},
title = {Strong pathwise solutions of the stochastic {Navier--Stokes}system},
volume = {14},
journal = {Advances in Differential Equations},
number = {5/6},
publisher = {Khayyam Publishing, Inc.},
pages = {567 -- 600},
year = {2009},
doi = {10.57262/ade/1355867260},
URL = {https://doi.org/10.57262/ade/1355867260}
}

@article{MaSe99,
author = {Maslowski, Bohdan and Seidler, Jan},
journal = {Atti della Accademia Nazionale dei Lincei. Classe di Scienze Fisiche, Matematiche e Naturali. Rendiconti Lincei. Matematica e Applicazioni},
language = {eng},
month = {6},
number = {2},
pages = {69-78},
publisher = {Accademia Nazionale dei Lincei},
title = {On sequentially weakly {Feller} solutions to {SPDE}’s},
url = {http://eudml.org/doc/252296},
volume = {10},
year = {1999},
}

\end{document}